\newtheorem{theorem}{Theorem}
\newtheorem{lemma}{Lemma}
\newtheorem{corollary}{Corollary}
\newtheorem{remark}{Remark}
\newcommand{\qedSolid}{\nobreak \ifvmode \relax \else
      \ifdim\lastskip<1.5em \hskip-\lastskip
      \hskip1.5em plus0em minus0.5em \fi \nobreak
      \vrule height0.75em width0.5em depth0.25em\fi}
\newcommand{\cd}{\overset{d}{\rightarrow}}    % convergence in distribution
\newcommand{\cp}{\overset{p}{\rightarrow}}    % convergence in probability
\newcommand{\sumk}{\sum_{i=1}^{k}}
\newcommand{\abs}[1]{\left|{#1}\right|}
\newcommand{\rcov}{{\rm Cov}}
\newcommand{\maxp}{\max_{1\leq i \leq k} p_{1i}p_{2i}}
\newcommand{\psum}{||{\bf P}_1||_1^2 +|| {\bf P}_2||^2_2}
\newcommand{\vari}{||{\bf P}_1 + {\bf P}_2||^2_2}
\newcommand{\var}[1]{||{\bf P}_1 + {\bf P}_2||_2^{#1}}
\begin{document}

\title{Two-Sample Test for Sparse High Dimensional Multinomial Distributions %\thanks{Grants or other notes
%about the article that should go on the front page should be
%placed here. General acknowledgments should be placed at the end of the article.}
}

\date{}
%\titlerunning{Short form of title}        % if too long for running head

\author{Amanda Plunkett  \thanks{Department of Defense, 9800 Savage Road, Ft. Meade, MD 20755, U.S.A.
\texttt{amanplunkett@gmail.com}}
and Junyong Park \thanks{Department of Mathematics and Statistics, University of Maryland Baltimore County, 1000 Hilltop Circle, Baltimore, MD 21250,  U.S.A.\texttt{junpark@umbc.edu}}
}

%\authorrunning{Short form of author list} % if too long for running head

\maketitle

\begin{abstract}
In this paper we consider testing the equality of probability vectors of two independent multinomial distributions in high dimension.
The classical chi-square test may have some drawbacks in this case since many of cell counts may be zero or may not be large enough.
We propose a new test and show its asymptotic normality and
the asymptotic power function.
Based on the asymptotic power function, we present an application of our result to neighborhood type test
which has been previously studied, especially for the case of fairly small $p$-values.
To compare the proposed test with existing tests, we provide numerical studies including simulations and real data examples.

\noindent {{\bf {Key words}} : Two sample test; High dimensional multinomial; Sparseness}
% \PACS{PACS code1 \and PACS code2 \and more}
% \subclass{MSC code1 \and MSC code2 \and more}
\end{abstract}

\section{Introduction}
\label{intro} 
In this paper, we discuss the problem of testing two multinomial distributions when
the number of categories is large.
Specifically, when we have two vectors ${\bf N}_c = (N_{c1}, \ldots, N_{ck})$ for $c=1,2$
   which follow multinomial distributions,  $Multinomial (n_c,  {\bf P}_c, k)$
         where  ${\bf P}_c =(p_{c1},p_{c2},\ldots, p_{ck})$ is a probability vector,
our testing scenario is
\begin{eqnarray} \label{objective}
H_0 : {\bf P}_1 = {\bf P}_2~~vs.~~H_1 : {\bf P}_1 \neq {\bf P}_2.
\end{eqnarray}

Our particular interest is a high dimensional multinomial with sparsity in the sense that
$k$ is large with a majority of categories having fairly small counts, such as  0, 1, or 2.
Typical examples are the cases where $k>n_c$  and  $ p_{ci}$'s are close to 0.
Some existing tests such as Pearson chi-square test
are based on large number of counts in each cell, however this may not occur under sparse data especially when $k$ is larger than $n_c$ for $c=1,2$.
The test that we propose is applicable to this sparse case and also more general cases including non-sparseness under some regular conditions presented later.

In fact, the hypothesis in (\ref{objective}) is equivalent to testing the equality of two mean vectors
of two multinomial distributions  $Multinomial (1,  {\bf P}_c, k)$  for $c=1,2$ with sample sizes $n_1$ and $n_2$.
For testing the equality of two population mean vectors, there are numerous studies.  
For example, see Bai and Saranadasa(1996), Chen and Qin (2010),
Srivastava (2009), Srivastava et al. (2013) and Park and Ayyala (2013).
However, multinomial distribution does not satisfy the assumptions such as factor models used in these references.

 On the other hand, Zelterman (1987) discussed goodness of fit tests in sparse contingency tables and also proposed the test
when the null probabilities are unknown. Zelterman (1987) includes the mean and variance of his proposed test
and proposed the normal approximation of standardized form of the test. From a theoretical point of view, Zelterman's test
requests some conditions on the cell probabilities and some relationship between
the number of cells and the frequency totals in contingency table.

It is worth while to noting that the goodness of fit test from one sample has a different context from the two sample problem.
In other words, the goodness of fit test is testing  $H_0 : {\bf P}= {\bf P}_0$ for a given ${\bf P}_0=(p_{01}, \ldots, p_{0k})$
and   ${\bf N}=(N_1, \ldots, N_k)$.
There are extensive studies on the goodness of fit testing problem for one sample such a s
Morris (1975), Cressie and Read (1984) and Kim et al. (2009) and
all these studies on goodness of fit tests are different from
the two sample problem in (\ref{objective}) in the sense that
test statistics for goodness of fit under the null hypothesis ${\bf P}={\bf P}_0$
utilize ${\bf P}_0$.
%
%proposed different types of test statistics and investigated their asymptotic
%properties under the assumption that $ 0< \lim n/k <\infty$ for $\sum_{i=1}^k N_i=n$
%and $np_{0i}$s are uniformly bounded, see  Koehler and Larntz (1980) for empirical studies.
%When conditioning on the total sum of counts in contingency tables, counts are modeled as coming from a multinomial distribution.
%Kim at al. (2009) compared different approaches of goodness of fit tests in sparse contingency tables.
%In a large dimensional contingency table, the $p_{ij}$s can be also modeled based on a lower dimensional space, say $\Theta \in R^{m}$ for $m<<k$
%such that $p_{ij} = g_{ij}(\Theta)$ for known function $g$ and testing   $H_0 :  p_{ij} =g_{ij} (\Theta)$.

In this paper, we propose a new test statistic to test (\ref{objective}) for two samples
of  multinomial distributions. We provide asymptotic distribution and power function of the proposed test
and show numerical studies. In particular, we emphasize that our asymptotic results
provide more general results than Zelterman (1987).
%This problem has a different framework from the goodness-of-fit tests in the body of literature previously mentioned.
%Zelterman (1987) briefly mentioned this case as an extension of goodness-of-fit test, so we consider Zelterman's test
%for comparison with our proposed tests.
%We propose our tests based on the Euclidean distance between two probability vectors  ${\bf P}_1$ and ${\bf P}_2$ and derive the asymptotic normality of our proposed tests. In addition, we also present the asymptotic power functions of our proposed tests and apply such asymptotic power function to the neighborhood test.

This paper is organized as follows. In Section~\ref{Existing} we discuss existing methods that can be applied to our testing (\ref{objective}). In Sections~\ref{MultinomSetup}-\ref{ProofOfNormality}, we present our proposed test statistics and prove their asymptotic normality. We propose a new test statistic and show
its asymptotic null distribution and asymptotic power function.
In Section~\ref{NeighborhoodTest},
we consider an application of our proposed test based on asymptotic power function.
We define a neighborhood test, which is used in conjunction with our test statistic in Section~\ref{newsGroups} to analyze the 20 newsgroups dataset. In Section~\ref{MultinomSim} we show the performance of our test compared to other existing tests through the use of simulation experiments.  Concluding remarks are presented in section~\ref{MultinomialConclusion}.

\section{Existing methods for Comparison of Two Multinomial Distributions}
\label{Existing}
Suppose we have  ${\bf N}_c=(N_{c1}, N_{c2},\ldots, N_{ck})$ for $c=1,2$
which has the multinomial distribution, namely $Multinomial(n_c, {\bf P}_c, k) \equiv  {\cal M}(n_c, {\bf P}_c, k)$
where  $n_c = \sum_{i=1}^k N_{ci}$.
One typical  method for testing (\ref{objective}) is to use Pearson's $\chi^2$ test, which is
reliable when sample size in each cell
is large enough.
%(i.e. $n_c >> k$ where $n_c$ is the sum of the observed counts over $k$ categories).
Pearson's $\chi^2$ statistic is defined as follows:
\begin{eqnarray}
\chi^2 = \sum_{c=1}^2 \sum_{i \in \{ i:N_{ci}>0 \}}  \frac{(N_{ci} - \hat{N}_{ci})^2 }{\hat{N}_{ci}}
\label{eqn:chi}
\end{eqnarray}
%with $ k^*=|\{ i : N_{ci} >0 \}| $ degrees of freedom
where $\hat{N}_{ci}=\hat{p}_{i}n_c$ for
$\hat p_i = \frac{N_{1i} + N_{2i}}{n_1 + n_2}$ is the expected count and $N_{ci}$ is the observed count
for the $i^{th}$ vector entry of the $c^{th}$ group.
 %$n_c=\sum_{i=1}^k x_{ci}$ and $\hat{p}_i = \frac{x_{1i}+x_{2i}}{n_1+n_2}$ under $H_0$.
As a related work, Anderson et al.(1972) applied a union-intersection method to
develop a procedure for testing the homogeneity of two sample multinomial data
and showed that their test is eventually equivalent to the Pearson chi-square test.
 The approximation based on chi-square distribution to (\ref{eqn:chi})
may be poor when the number of frequencies  $N_{ci}$
is not large enough. 

Alternatively, Zelterman (1987) proposed a goodness-of-fit statistic for contingency tables
 which provides improved power over the $\chi^2$ test when the $\chi^2$ is biased due to sparseness.
 They presented the conditional mean and variance of their proposed test conditioning on the marginal totals. They applied the asymptotic normality of the normalized form of their proposed test, which is effective especially for sparse and large dimensional contingency tables.
%They mentioned that their asymptotic normality holds when intensity of counts are bounded above and away from 0.
Zelterman's test is
\begin{eqnarray}
Z = \frac{\hat {D}_Z^2 - E(\hat{D}_Z^2 )}{\sqrt{Var(\hat{D}_Z^2 )}}
\label{eqn:Zelterman}
\end{eqnarray}
where $\hat{D}_Z^2 = \sum_{c=1}^2 \sum_{i=1}^{k^\star}  \frac{(N_{ci} - \hat{N}_{ci})^2 - N_{ci}}{\hat{N}_{ci}}$,
%$\bf{s}$ and $\bf{t}$ are the row and column totals,
$\hat{N}_{ci}=\hat{p}_{i}n_c$ for  $\hat p_i = \frac{N_{1i} + N_{2i}}{n_1 + n_2}$,
and $N_{ci}$ is the observed value for the $i^{th}$ entry of the $c^{th}$ group.
%$n_c=\sum_{i=1}^k x_{ci}$ and $\hat{p}_i = \frac{x_{1i}+x_{2i}}{n_1+n_2}$ under $H_o$.
Zelterman (1987) presented $E(\hat D_Z^2)$ and $Var(\hat D_Z^2)$.
% Dimensions where $N_{1i}+N_{2i}=0$ are ignored and the new dimension is $k^\star =  k - |\{i : N_{1i}+N_{2i} =0 \}|$.
From a theoretical point of view, Zelterman (1987) mentioned that the asymptotic normality of $Z$ in (\ref{eqn:Zelterman})
can hold
when $n$ and $k$ have the same increasing rate  and  the cell probabilities have the rates
 between $\frac{M_1}{k}$ and  $\frac{M_2}{k}$ for some constants $M_1 < M_2$.
%
%One common idea  in chi-square test (\ref{eqn:chi})  and Zelterman's test  (\ref{eqn:Zelterman})
%is that each component includes  $\hat N_{ci}$ in all denominators.
%This may cause instability of such tests especially when  $\hat N_{ci}$ are small values from sparse data.
%
%%%%%%%%%%%%%%%%%%%%%%%%%%%%%%%%%%%%%%%%%%
%
%
%In particular, the asymptotic normality of the chi-square test and the likelihood ratio test in Morris (1975)
Theses imply $np_{ci} \geq \epsilon>0$ for some constant $\epsilon$ which means that 
the expected counts under the null hypothesis should be bounded away from 0.  
Our proposed test is motivated by the estimator of 
Euclidean distance between two probability vectors and demonstrate some advantage over the test in Zelterman (1987) in two sample case. 
This advantage can be understood through both theory and numerical studies as we will show.

Additionally, there are many studies for testing the equality of mean vectors under some models such as factor models, 
for example, see  Bai and Saranadasa (1996), Chen and Qin (2010), Park and Ayyala (2013) and Srivastava (2009). 
As mentioned in the introduction, the multinomial distribution does not satisfy the conditions in all these studies. 
However, our problem for two multinomial distributions ${\bf N}_1$ and ${\bf N}_2$  
 is considered as testing (\ref{objective}) when 
 there are  ${\bf N}_{cl}$ where ${\bf N}_{cl} \sim Multinomial(1,{\bf P}_c, k)$ for $l=1,\ldots, n_c$ and $c=1,2$. 
 This is actually the case of testing the equality of mean vectors of ${\bf N}_{1l}$ and  ${\bf N}_{2l}$  which is 
 $ H_0 : {\bf P}_1 = {\bf P}_2$ in (\ref{objective}). 
 %We consider some of these tests for numerical comparisons later. 
  The tests in Park and Ayyala (2013) and Srivastava (2009) are not well defined in our setting  
  due to zero values in many cells. 
%  The test in Bai and Saranadasa (1996) needs  less computations than that in Chen and Qin 
%  and  has the same asymptotic power as Chen and Qin (2010), so 
  We will consider the test in Bai and Saranadasa (1996) in our numerical studies while the test in Chen and Qin (2010) 
  is not practical under our situation due to computational complexity when $n_c$s are thousands.

In the following section, we propose a new test and show its asymptotic normality and
the asymptotic power under some conditions.
We will also provide numerical studies comparing our proposed test with existing methods as well as a real data example.

%Rosa et. al. consider a similar but different scenario in the analysis of human microbiome data in \cite{Rosa}. They compare two groups of patients to test homogeneity of the taxa where the groups are independent. Additionally, they consider a Dirichlet-multinomial distribution which allows for overdispersion among the individuals in each group. We found that when ignoring the paired setup of the data and applying the method proposed by Rosa et. al., the test lacked power.
%TODO: Check power statement.

\section{New Test Statistic for Comparison of Two Multinomial Distributions}
\label{MultinomSetup}

In this section, we propose a new test and derive the asymptotic power of the proposed test from
the asymptotic normality under some regularity conditions.
\subsection{The Proposed Test Statistic}
We present a new procedure for testing the hypotheses in (\ref{objective})  when
the dimension of the multinomial vector is large. Our main goal is to propose a new test and derive the asymptotic distribution and asymptotic power function of the proposed test.
The proposed test is based on an unbiased estimator of
Euclidean distance between ${\bf P}_1$ and ${\bf P}_2$:
$\sumk (p_{1i}-p_{2i})^2 = ||{\bf P}_1 - {\bf P}_2 ||_2^2$ where
$||{\bf x}||_2 = \sqrt{\sumk x_i^2}$ for ${\bf x} =(x_1,\ldots, x_k)$.
Before we construct our test statistic, we mention that
we reformulate the multinomial distributed vector $(N_{c1},\ldots, N_{ck})$
as  the conditional distribution of  $(X_{c1},\ldots, X_{ck})$
given the total sum $\sum_{i=1}^k X_{ci}$ where $X_{ci}$s come from an independent Poisson distribution with mean $\lambda_{ci}= n_c p_{ci}$, i.e.,
$(N_{c1},\ldots, N_{ck}) \overset{d}{\equiv} (X_{c1},\ldots, X_{ck})|\sum_{i=1}^k X_{ci}=n_c$ where
$ \overset{d}{\equiv}$ means the equivalence of two distributions.
Morris (1975) provided asymptotic results for the multinomial distribution using Poisson distributions conditioning on
the total sum.
We first propose our test statistic based on independent Poisson distributions $(X_{ci})_{1\leq i \leq k}$ and then
we provide asymptotic results conditioning on the total sums, $\sum_{i=1}^k X_{ci}=n_c$.
In the observational vector of independent Poisson variables, say $ {\bf X}_c=(X_{ci})_{1\leq i \leq k}$ with
$X_{ci} \sim Poisson (\lambda_{ci})$ for $\lambda_{ci}=n_c p_{ci}$,
we define
\begin{eqnarray}
||{\bf P}_1 - {\bf P}_2||_2^2 = \left\vert \left\vert  \frac{\boldsymbol{\lambda}_1}{n_1} - \frac{\boldsymbol{\lambda_2}}{n_2} \right\vert \right\vert_2^2
= \sum_{i=1}^k \left(\frac{\lambda_{1i}}{n_1} - \frac{\lambda_{2i}}{n_2} \right)^2.
\label{eqn:euclidean}
\end{eqnarray}
and, to obtain an unbiased estimator for (\ref{eqn:euclidean}), we introduce
\begin{eqnarray}
f^*(x_1, x_2) =  \left(\frac{x_{1}}{n_1} -\frac{x_{2}}{n_2} \right)^2 - \frac{x_{1}}{n_1^2} - \frac{x_{2}}{n_2^2}.
\label{eqn:fstar}
\end{eqnarray}
We obtain an unbiased estimator of  $||{\bf P}_1 - {\bf P}_2 ||_2^2$  based on ${\bf X}_c$ for $c=1, 2$ which is
\begin{eqnarray}
{\cal D} \equiv \sum_{i=1}^k \left(\left(\frac{X_{1i}}{n_1} -\frac{X_{2i}}{n_2} \right)^2 - \frac{X_{1i}}{n_1^2} - \frac{X_{2i}}{n_2^2} \right) = \sum_{i=1}^k f^*(X_{1i},X_{2i})
\label{eqn:D}
\end{eqnarray}
satisfying  $E({\cal D}) =||{\bf P}_1 - {\bf P}_2||_2^2$.
Theorem \ref{thm:asymptoticdist} and Corollary \ref{cor:power} will show that the normalized form  $ \frac{{\cal D}}{\sqrt{\widehat {Var({\cal D})}}}$ for some estimator $\widehat{Var({\cal D})}$
has the asymptotic normal distribution for multinomial vector.
% ${\bf N}_c$, $c=1,2$
%and some variance estimator, $\widehat{Var({\cal D})}$.
%
The Euclidean distance is commonly used for testing the equality of mean vectors of multivariate normal distributions
or factor models with some moment conditions.
See Bai and Saranadasa (1995) and Chen and Qin (2010).
In the context of testing in contingency tables,
the idea for the chi-square distribution is to consider
the goodness of fit for each cell using standardized quantities  under the null hypothesis,
$  \frac{(N_{ci} - n_c \hat p_i)^2}{ n_c \hat p_i}$ for
$\hat p_i = \frac{  N_{1i} + N_{2i}}{n_1 + n_2}$.
However, the denominator   $n_c \hat p_i$  in  (\ref{eqn:chi})
 % $\frac{(N_{ci} - n_c \hat p_i)^2}{ n_c \hat p_i}$
   is affected by cell probabilities which may lead to very skewed distribution for
small $p_i$s.  In our context, the sparse multinomial data are from small probabilities
in most of cells, so chi-square approximation to each cell may not be desirable.
On the other hand, our proposed tests based on ${\cal D}$ in (\ref{eqn:D}) first aggregate
estimates of $(p_{1i}-p_{2i})^2$  and then consider the normalization of ${\cal D}$.
This difference will lead to different performance between our proposed test and the test (\ref{eqn:Zelterman}).

We first present the following theorem which plays a major role in deriving the asymptotic distribution of our proposed test and
the asymptotic power.
We use the following notation: let $A=(a_{ij})_{1\leq i\leq m, 1\leq j \leq n}$ and  $|| A||_q = \left(\sum_{i,j} |a_{ij}|^q \right)^{1/q}$ for $q>0$.
Let ${\bf P}_c = (p_{c1},\ldots, p_{ck})$ for $c=1,2$
and  $\boldsymbol{\xi} ={\bf P}_{1}-{\bf P}_{2}$.
 For two vectors ${\bf P}_1$ and ${\bf P}_2$, the dot product is
${\bf P}_1 \cdot {\bf P}_2 = \sum_{i=1}^k p_{1i}p_{2i}$ and
component-wise product of ${\bf P}_1$ and ${\bf P}_2$ is ${\bf P}_1*{\bf P}_2 = (p_{11}p_{21}, \ldots, p_{1k}p_{2k})$.
We also define $\sqrt{{\bf P}_c} = (\sqrt{p_{c1}},\ldots, \sqrt{p_{ck}})$ and  $|\boldsymbol{\xi}| =(|\xi_1|,\ldots, |\xi_k|)$.
Let $n$ be a sequence satisfying $ n_1 \asymp n_2 \asymp n$
where $A_n \asymp B_n $  implies   $0< \lim\inf_n \frac{A_n}{B_n} \leq  \lim\sup_n \frac{A_n}{B_n} <\infty$
for sequences $A_n>0$ and $B_n>0$.
The notion $\cd$ implies convergence in distribution.

\begin{theorem}
Let ${\bf N}_c$ be independent multinomial random vectors  for $c=1,2$ such as ${\cal M}(n_c, {\bf P}_c, k)$ where ${\bf N}_c=(N_{ci})_{1\leq i \leq k}$  and
${\bf P}_c=(p_{ci})_{1\leq i \leq k}$ for $c=1,2$.
Suppose the following conditions are satisfied: for $n=n_1 + n_2$,
\begin{eqnarray*}
&&\mbox{Condition 1:~~~~} \min (n_1, n_2) \rightarrow \infty,~~ \frac{n_1}{n} \rightarrow c \in (0,1), \\
&&\mbox{Condition 2:~~~~}   \frac{\max_i p_{ci}^2}{||{\bf P}_c ||_2^2}  \rightarrow 0 ~~ \mbox{for $c=1,2$ as $k \rightarrow \infty$,}  \label{uan_result}\\
&&\mbox{Condition 3:~~~~}   n \vari \geq \epsilon >0 ~~~\mbox{for some $\epsilon>0$,}     \\
&&\mbox{Condition 4:~~~~}   {n^2|| \boldsymbol{\xi} ||_2^4} = O(\vari   ).
\end{eqnarray*}
Then, we have
\begin{eqnarray}
\frac{ \sumk f^*(N_{1i}, N_{2i})-||{\boldsymbol \xi} ||_2^2}{\sigma_k}   \cd N(0,1)
\end{eqnarray}
where 
\begin{eqnarray}
\sigma_k^2 =   2 \sum_{i=1}^k \left(\frac{p_{1i}}{n_{1}} + \frac{p_{2i}}{n_2} \right)^2
\label{eqn:var}
\end{eqnarray}

  and $f^*$ is given by (\ref{eqn:fstar}).
\label{thm:asymptoticdist}
\end{theorem}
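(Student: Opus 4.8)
The plan is to exploit the Poissonization device already set up above: prove the CLT for the independent--Poisson version of the statistic, and then transfer it to the multinomial law by conditioning on the cell--count totals. In the Poisson model one has mutually independent $X_{ci}\sim\mathrm{Poisson}(\lambda_{ci})$, $\lambda_{ci}=n_cp_{ci}$, and $\mathcal{D}:=\sum_{i=1}^k f^*(X_{1i},X_{2i})$ is a sum of $k$ \emph{independent} summands (independence across cells $i$ and across groups $c$), i.e.\ a triangular array as $k,n\to\infty$. Using the Poisson factorial moments $E[X(X-1)\cdots(X-r+1)]=\lambda^r$ one checks $E f^*(X_{1i},X_{2i})=(p_{1i}-p_{2i})^2=\xi_i^2$, hence $E\mathcal{D}=\|{\boldsymbol\xi}\|_2^2$, and
\[
\mathrm{Var}\!\big(f^*(X_{1i},X_{2i})\big)=2\Big(\tfrac{p_{1i}}{n_1}+\tfrac{p_{2i}}{n_2}\Big)^{2}+4\,\xi_i^2\Big(\tfrac{p_{1i}}{n_1}+\tfrac{p_{2i}}{n_2}\Big),
\]
so $\mathrm{Var}(\mathcal{D})=\sigma_k^2+4\sum_i\xi_i^2\big(\tfrac{p_{1i}}{n_1}+\tfrac{p_{2i}}{n_2}\big)$. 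The second term is a remainder: using $|\xi_i|\le p_{1i}+p_{2i}$, Cauchy--Schwarz and $\sum_i p_{ci}=1$ it is bounded by a multiple of $n^{-1}\|{\boldsymbol\xi}\|_2\,\|{\bf P}_1+{\bf P}_2\|_4^{2}$ (equivalently of $n^{-1}\|{\boldsymbol\xi}\|_\infty^2$), and Conditions~2--4 are what is needed to make it $o(\sigma_k^2)$: Condition~2 gives $\|{\bf P}_c\|_\infty=o(\|{\bf P}_c\|_2)$ and $\|{\bf P}_c\|_2\to0$, Condition~4 caps $\|{\boldsymbol\xi}\|_2$, and Condition~3 prevents $\sigma_k$ from degenerating. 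Hence $\mathrm{Var}(\mathcal{D})=\sigma_k^2(1+o(1))$.

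Next I would apply the Lyapunov (or Lindeberg--Feller) CLT to $\mathcal{D}$ in the Poisson model. The decisive input is $\max_{1\le i\le k}\mathrm{Var}(f^*(X_{1i},X_{2i}))/\sigma_k^2\to0$, which follows from Condition~2 after discarding the nonnegative cross term in $(\tfrac{p_{1i}}{n_1}+\tfrac{p_{2i}}{n_2})^2\ge\tfrac{p_{1i}^2}{n_1^2}+\tfrac{p_{2i}^2}{n_2^2}$; more generally one bounds the third (or fourth) absolute central moments $E|f^*(X_{1i},X_{2i})-\xi_i^2|^{3}$ by a polynomial in $\lambda_{1i},\lambda_{2i}$ via the same Poisson moment identities and checks $\sum_i E|f^*_i-\xi_i^2|^{3}/\sigma_k^{3}\to0$. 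The sparse cells ($\lambda_{ci}=O(1)$) contribute per--cell terms of the order of their variances while the dense cells ($\lambda_{ci}\to\infty$) behave like squared Gaussians, so the negligibility from Condition~2 carries through, giving $(\mathcal{D}-\|{\boldsymbol\xi}\|_2^2)/\sigma_k\cd N(0,1)$ in the Poisson model.

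For de--Poissonization, set $T_c=\sum_{i=1}^k X_{ci}\sim\mathrm{Poisson}(n_c)$; then $\sum_i f^*(N_{1i},N_{2i})\overset{d}{\equiv}\mathcal{D}\mid\{T_1=n_1,\,T_2=n_2\}$, with the two groups and hence the two conditioning events independent. By the Cram\'er--Wold device and the Lyapunov bound just used, the triple $\big((\mathcal{D}-\|{\boldsymbol\xi}\|_2^2)/\sigma_k,\ (T_1-n_1)/\sqrt{n_1},\ (T_2-n_2)/\sqrt{n_2}\big)$ is jointly asymptotically Gaussian; its limiting correlations are governed by $\mathrm{Cov}(\mathcal{D},T_c)=2\,{\bf P}_c\cdot{\boldsymbol\xi}$, and since $({\bf P}_c\cdot{\boldsymbol\xi})^2\le\sum_i p_{ci}\xi_i^2$ one shows $\mathrm{Cov}(\mathcal{D},T_c)/(\sigma_k\sqrt{n_c})\to0$ under Conditions~2--4, so $\mathcal{D}$ is asymptotically independent of $(T_1,T_2)$. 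Combined with the local limit theorem for the Poisson laws of the $T_c$ (so $P(T_c=n_c)\asymp n_c^{-1/2}$), this is exactly the configuration in which conditioning does not alter the limit law of the remaining coordinate --- the Poissonization lemma, for which one can appeal to Morris (1975). Finally, passing from the Poisson to the multinomial expectation contributes the deterministic shift $-\big(\|{\bf P}_1\|_2^2/n_1+\|{\bf P}_2\|_2^2/n_2\big)$, which is $o(\sigma_k)$ since $\|{\bf P}_c\|_2\to0$ by Condition~2; absorbing it yields $\big(\sum_i f^*(N_{1i},N_{2i})-\|{\boldsymbol\xi}\|_2^2\big)/\sigma_k\cd N(0,1)$. (Alternatively one could compute the first two moments directly under the multinomial and invoke a CLT for negatively associated arrays, but the Poissonization route is cleaner and matches the setup above.)

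I expect the main obstacle to be the moment bookkeeping that must hold \emph{uniformly} over the sparse cells ($\lambda_{ci}=O(1)$) and the dense cells ($\lambda_{ci}\to\infty$) at once: namely (i) showing that the variance remainder $4\sum_i\xi_i^2\big(\tfrac{p_{1i}}{n_1}+\tfrac{p_{2i}}{n_2}\big)$ and the conditioning covariances $2\,{\bf P}_c\cdot{\boldsymbol\xi}$ are negligible relative to $\sigma_k$ using only Conditions~2--4, and (ii) verifying the Lindeberg condition for the quadratic--in--Poisson summands $f^*(X_{1i},X_{2i})$. In this scheme Condition~2 plays the role of the uniform--asymptotic--negligibility hypothesis, Condition~3 keeps the normalizer from vanishing too fast, and Condition~4 confines the alternative to a Pitman--type neighbourhood so that the centering $\|{\boldsymbol\xi}\|_2^2$ is itself $O(\sigma_k)$.
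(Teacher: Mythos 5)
Your overall architecture---Poissonize, prove a CLT for the independent-Poisson statistic, then condition on the totals---is the same as the paper's, and your moment computations check out: $E f^*(X_{1i},X_{2i})=\xi_i^2$, $\mathrm{Cov}(\mathcal{D},T_c)=2\,{\bf P}_c\cdot\boldsymbol{\xi}$, the extra variance term $4\sum_i\xi_i^2\left(\frac{p_{1i}}{n_1}+\frac{p_{2i}}{n_2}\right)$, and the $o(\sigma_k)$ mean shift under the multinomial are all correct and are negligible under Conditions 2--4. The paper handles these same quantities by subtracting the linear term ${\cal G}_{2i}$ from the centered $f^*$ so that the corrected summands $f_i$ are exactly uncorrelated with the totals and have variance exactly $\sigma_k^2$; your choice to keep $f^*$ and argue the correlation and variance excess vanish asymptotically is a cosmetic difference.

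The genuine gap is at the de-Poissonization step. You assert that joint asymptotic normality of $(\mathcal{D},T_1,T_2)$ with vanishing correlations, together with a local limit theorem giving $P(T_c=n_c)\asymp n_c^{-1/2}$, ``is exactly the configuration in which conditioning does not alter the limit law.'' It is not: conditioning on an event of probability $\asymp n^{-1/2}$ is not controlled by weak convergence, and asymptotic independence in distribution is compatible with exact functional dependence (a statistic that is a function of $T_1$ alone can be asymptotically normal and asymptotically uncorrelated with $T_1$, yet its conditional law given $T_1=n_1$ is degenerate). The conditional CLT of Steck (1957, Theorem 2.1)---the result Morris (1975) and this paper invoke---requires, in addition to the trivariate normality, the uniform equicontinuity of the conditional characteristic function of the statistic in the conditioning variables. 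Verifying that hypothesis is the bulk of the technical work here: it is the paper's Lemma \ref{lemmaMorrisCondition}, a second-moment bound on $\sum_{i}\left[f_i(L_{1i}+M_{1i},L_{2i}+M_{2i})-f_i(L_{1i},L_{2i})\right]$ for auxiliary multinomials ${\bf L}_{ck}\sim{\cal M}({\bf P}_c,k,n_c+u_cn_c^{1/2})$ and ${\bf M}_{ck}\sim{\cal M}({\bf P}_c,k,h_cn_c^{1/2})$, uniform over $|u_c|\le\delta$ as $h=\max(h_1,h_2)\to0$, and it occupies most of the supplementary material. Your proposal neither performs this verification nor lists it among the anticipated obstacles, so as written the passage from the Poisson CLT to the multinomial CLT does not go through.
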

\begin{proof}The proof of Theorem \ref{thm:asymptoticdist} will be provided in section 4 with a series of lemmas.
\end{proof}

\begin{remark}
The conditions in Theorem \ref{thm:asymptoticdist}  will be used throughout this paper.
The sample sizes $n_c$ for $c=1,2$ and the dimension $k$.
do not have explicit relationship.
% such as
%$ n p_{ci} \geq \epsilon $ used in
%Morris (1975).
%The condition $3$ requests implicit relationship between $k$ and $n=n_1+n-2$.
%As mentioned earlier, this is partly because we do not use
%test statistics based on normalized form for each cell as used in chi-square test \ref{eqn:chi} or
%\ref{eqn:Zelterman}.
%For example, our proposed tests which will be defined later  are available when
%$p_{ci}=0$ if $p_{ci}s$ satisfy the given conditions in Theorem \ref{thm:asymptoticdist}.
%Furthermore, the conditions in Theorem \ref{thm:asymptoticdist} are free from
%an imposed relationship  between the number of cells $k$ and  the total sum of counts $n_c$s for $c=1,2$.
This is in contrast to
(\ref{eqn:Zelterman})  in Zelterman (1987)
  assuming that $k$ and $n$ have the same increasing rate for the theoretical proof of the asymptotic normality.  Instead, our conditions in Theorem \ref{thm:asymptoticdist}
  do not require direct relationship between $k$ and $n_c$.
Rather, the relationship between $k$ and $n$ are only through
  Conditions 3 in Theorem \ref{thm:asymptoticdist}.
For example, when $p_{ci} \asymp 1/k$ and $n \asymp k$, then the condition 3 requests
$k = O(n)$ which includes the case of $k \asymp n$ in Zelterman (1987).
However, the condition 3 covers a variety of situations compared to Zelterman (1987).
For example, when  $p_{ci} = \frac{1/{i}}{\sum_{i=1}^k 1/{i}} \sim  \frac{1}{{i} \log k}$ and $(\log k)^2 =O(n)$, 
all four conditions in Theorem \ref{Theorem1} are satisfied.  
The condition $(\log k)^2 =O(n)$ allows 
$k$ to increase at the rate of $\exp(\sqrt{n})$. 
In other words, our conditions include more general relationship between $n_c$ and $k$ through 
depending on the configurations of $p_{ci}$s. 
\end{remark}

In Theorem \ref{thm:asymptoticdist}$,
\sum_{i=1}^n f^*(N_{1i},N_{2i})$ is known, however $\sigma_k^2$ is unknown, so
we need to have some estimates of $\sigma_k^2$ defined in (\ref{eqn:var})
which have an asymptotically equivalent behavior.
Our proposed test is constructed under the null hypothesis $H_0$: ${\bf P}_1 = {\bf P}_2$.
%In section \ref{ProofOfNormality}, we will show that $\sigma_k^2$
%is the variance of $\sum_{i=1}^k f(X_{1i}, X_{2i})$ for Poisson variables $X_{ci}$, $c=1,2$.
For derivation of $\sigma_k^2$, see the proof of Lemma~\ref{lemma:variance} in section \ref{ProofOfNormality}.
In practice, we need some estimate of $\sigma_k^2$ based on multinomial data ${\bf N}_c$ for $c=1,2$.
We propose an estimator  of $\sigma_k^2$ which is 
\begin{eqnarray}
%\sigma_k^2 &=& 2\sumk \left(\frac{p_{1i}}{n_1} + \frac{p_{2i}}{n_2} \right)^2    \label{eqn:sigmak2} \\
\hat \sigma_k^2 &=&  \sumk \sum_{c=1}^2\frac{2}{n_c^2} \left(\hat p_{ci}^2 - \frac{\hat p_{ci}}{n_c} \right)
+ \frac{4}{n_1n_2} \sumk \hat p_{1i} \hat p_{2i}
\label{eqn:sigma2}
\end{eqnarray}
where  $p_i=\frac{n_1 p_{1i} + n_2 p_{2i}}{n_1 + n_2}$ and $\hat p_{ci} =  \frac{N_{ci}}{n_c}$.  
%It is obvious that $\sigma_k^2 = \sigma^2$ under $H_0$, however they are different under $H_1$.
%This difference causes different power functions of the corresponding two tests based on $\hat \sigma$.
Lemma~\ref{lemmaConvProb} states that the proposed estimator of $\sigma_k^2 $  has 
the property of ratio consistency.

%We begin by providing additional lemmas, which we will use in our proof of Theorem~\ref{thm:asymptoticdist}.

%Lemma \ref{Lemma2} shows that the conditions in Theorem \ref{thm:asymptoticdist} imply
%that $\max_{1\leq i\leq k} p_{ci} =o(1)$ for $c=1,2$.
%The following lemma states that the proposed estimator $\hat{\sigma}_k^2$ has the property of ratio consistency to $\sigma_k^2$ and $\sigma^2$ defined in (\ref{eqn:sigmak2}) and (\ref{eqn:sigma2}).

\begin{lemma} \label{lemmaConvProb}
Under conditions 1 and 2  in Theorem \ref{thm:asymptoticdist},
$\frac{\hat \sigma_k^2}{\sigma_k^2} \cp 1$.
\label{lemma:ratioconsistency}
\end{lemma}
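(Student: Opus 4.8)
The plan is to prove ratio consistency through the bias--variance route: it suffices to verify (i) $E[\hat\sigma_k^2]/\sigma_k^2\to 1$ and (ii) $\mathrm{Var}(\hat\sigma_k^2)/\sigma_k^4\to 0$, because then Chebyshev's inequality gives, for every $\delta>0$,
\begin{eqnarray*}
P\!\left(\left|\frac{\hat\sigma_k^2}{\sigma_k^2}-1\right|>\delta\right)\le\frac{1}{\delta^2}\left(\frac{\mathrm{Var}(\hat\sigma_k^2)}{\sigma_k^4}+\left(\frac{E[\hat\sigma_k^2]}{\sigma_k^2}-1\right)^{\!2}\right)\longrightarrow 0.
\end{eqnarray*}

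For (i) I would use the elementary identity $\hat p_{ci}^2-\hat p_{ci}/n_c=N_{ci}(N_{ci}-1)/n_c^2$ to rewrite
\begin{eqnarray*}
\hat\sigma_k^2=\sum_{c=1}^2\frac{2}{n_c^4}\sum_{i=1}^k N_{ci}(N_{ci}-1)+\frac{4}{n_1 n_2}\sum_{i=1}^k\hat p_{1i}\hat p_{2i}.
\end{eqnarray*}
Since $N_{ci}\sim\mathrm{Bin}(n_c,p_{ci})$ marginally, $E[N_{ci}(N_{ci}-1)]=n_c(n_c-1)p_{ci}^2$, and since ${\bf N}_1$ and ${\bf N}_2$ are independent, $E[\hat p_{1i}\hat p_{2i}]=p_{1i}p_{2i}$; comparing with (\ref{eqn:var}) this yields $E[\hat\sigma_k^2]=\sigma_k^2-\sum_{c=1}^2\frac{2}{n_c^3}\|{\bf P}_c\|_2^2$. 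Because $\frac{2}{n_c^3}\|{\bf P}_c\|_2^2\le\frac{1}{\min(n_1,n_2)}\cdot\frac{2}{n_c^2}\|{\bf P}_c\|_2^2\le\frac{1}{\min(n_1,n_2)}\sigma_k^2$, Condition 1 gives (i).

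For (ii) I would decompose $\hat\sigma_k^2=T_1+T_2+T_{12}$ with $T_c=\frac{2}{n_c^4}\sum_i N_{ci}(N_{ci}-1)$ and $T_{12}=\frac{4}{n_1n_2}\sum_i\hat p_{1i}\hat p_{2i}$, so that $\mathrm{Var}(\hat\sigma_k^2)=\mathrm{Var}(T_1)+\mathrm{Var}(T_2)+\mathrm{Var}(T_{12})+2\,\mathrm{Cov}(T_1,T_{12})+2\,\mathrm{Cov}(T_2,T_{12})$, the term $\mathrm{Cov}(T_1,T_2)$ vanishing by independence of the two samples. The quantity $\sum_i N_{ci}(N_{ci}-1)=\sum_{\ell\ne m}\mathbf{1}\{Y_{c\ell}=Y_{cm}\}$, where $Y_{c\ell}$ are the i.i.d.\ category labels underlying sample $c$, is a degree-two $U$-statistic, so a Hoeffding decomposition --- equivalently the factorial-moment / Poissonization computation already used in the paper --- gives $\mathrm{Var}\!\big(\sum_i N_{ci}(N_{ci}-1)\big)=O\!\big(n_c^3\sum_i p_{ci}^3+n_c^2\|{\bf P}_c\|_2^2\big)$, hence $\mathrm{Var}(T_c)=O\!\big(n_c^{-5}\sum_i p_{ci}^3+n_c^{-6}\|{\bf P}_c\|_2^2\big)$; the term $\mathrm{Var}(T_{12})$ and the two covariances I would bound by conditioning on one sample and using independence together with the same moment estimates. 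Dividing by $\sigma_k^4\ge\big(\tfrac{2}{n_c^2}\|{\bf P}_c\|_2^2\big)^2$ and using $\sum_i p_{ci}^3\le(\max_i p_{ci})\|{\bf P}_c\|_2^2$, every ratio reduces to expressions built from $\dfrac{\max_i p_{ci}}{n_c\|{\bf P}_c\|_2^2}$ and $\dfrac{1}{n_c^2\|{\bf P}_c\|_2^2}$; the first is controlled by Condition 2 (the maximal cell probability is negligible relative to $\|{\bf P}_c\|_2$) together with Condition 1, and for the second one uses that $\|{\bf P}_c\|_2^2$ cannot be too small --- guaranteed by Condition 3, since $n\|{\bf P}_1+{\bf P}_2\|_2^2$ is bounded below --- so that, with Condition 1, a short comparison of the two samples' norms makes it vanish.

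The step I expect to be the main obstacle is this variance estimate. Because the counts $N_{c1},\dots,N_{ck}$ are negatively correlated through the constraint $\sum_i N_{ci}=n_c$, the sum $\sum_i N_{ci}(N_{ci}-1)$ cannot be handled cell by cell, which is what necessitates the $U$-statistic / Poissonization argument; and the delicate part is the bookkeeping needed to show that $\mathrm{Var}(T_{12})$ and the cross-covariances are, relative to $\sigma_k^4$, no larger than $\mathrm{Var}(T_1)+\mathrm{Var}(T_2)$, and to pin down which of the hypotheses of Theorem \ref{thm:asymptoticdist} drives each resulting ratio to $0$. Once these bounds are in place, the displayed Chebyshev inequality closes the argument.
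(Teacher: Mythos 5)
Your proposal is correct and follows essentially the same route as the paper's appendix proof: both control the (asymptotically negligible) bias of $\hat\sigma_k^2$, bound the second moment of each of the three pieces $\sum_i(\hat p_{ci}^2-\hat p_{ci}/n_c)$ and $\sum_i\hat p_{1i}\hat p_{2i}$ relative to $\sigma_k^4\asymp n^{-4}\|{\bf P}_1+{\bf P}_2\|_2^4$ via the multinomial factorial moments (your $U$-statistic packaging of $\sum_i N_{ci}(N_{ci}-1)$ is the same computation the paper does term by term with Lemma.S2), and close with Chebyshev. Your remark that Condition 3 is needed to kill ratios of the form $1/(n^2\|{\bf P}_1+{\bf P}_2\|_2^2)$ matches the paper's own argument, which also invokes Condition 3 even though the lemma's statement lists only Conditions 1 and 2.
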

\begin{proof}
See Appendix. 
\end{proof}

%Under $H_0$, although $\sigma_k^2 = \sigma^2$ defined in \ref{eqn:sigmak2} and \ref{eqn:sigma2},
%it is expected that we have the test based on $\hat \sigma^2_k$.
Based on the estimators $\hat \sigma_k^2$, we define the following two test statistics, namely $T$;
\begin{eqnarray}
T \equiv \frac{\sumk f^*(N_{1i}, N_{2i})}{\hat \sigma_k}
\label{eqn:test}
\end{eqnarray}
where $f^*$ is defined in (\ref{eqn:fstar}).
From Theorem \ref{thm:asymptoticdist} and Lemma \ref{lemma:ratioconsistency},
$T$ is asymptotic normal  under the $H_0$.
% show that
%our proposed tests with different estimators $\hat \sigma$ and $\hat \sigma_k$
%are asymptotically size $\alpha$ tests for a given $\alpha$.
%Note that the conditions  $3-10$  in Theorem \ref{thm:asymptoticdist}
%are satisfied under $H_0$ from $\boldsymbol{\xi} = {\bf 0}$,
%so our proposed tests with two different estimators  $\hat \sigma_k^2$ and $\hat \sigma^2$
%in  \ref{eqn:sigmak2} and \ref{eqn:sigma2} are asymptotically size $\alpha$ tests.
%We state this in the following corollary.
%Combining Theorem \ref{thm:asymptoticdist} and Lemma \ref{lemma:ratioconsistency},
%it is obvious that $T$ and $T'$ have the asymptotic normality.
We state this in the following corollary.
\begin{corollary} \label{Theorem1}
Under $H_0$, if Conditions 1 and 2 in  Theorem \ref{thm:asymptoticdist} are satisfied,
%\begin{enumerate}
%\item $n_1/n_2 \rightarrow C>0$  for some constant $C$ as $n_1 \rightarrow \infty$ and $n_2 \rightarrow \infty$.
%\item $n_c\sum p_i^2 \geq \epsilon$ for $c \in {1,2}$
%\item $\frac{\max p_{i}^2}{\sum_{i=1}^kp_{i}^2} \rightarrow 0$ as $k \rightarrow \infty$.
%\end{enumerate}
then  $T \cd  N(0,1)$ where $T$ is defined in (\ref{eqn:test}).
\end{corollary}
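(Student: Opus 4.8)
The plan is to obtain the statement from Theorem~\ref{thm:asymptoticdist} and Lemma~\ref{lemma:ratioconsistency} via Slutsky's theorem. Write $D_N=\sumk f^*(N_{1i},N_{2i})$, so that $T=D_N/\hat\sigma_k$. Under $H_0$ we have ${\bf P}_1={\bf P}_2$, hence $\boldsymbol{\xi}={\bf P}_1-{\bf P}_2={\bf 0}$ and $\|\boldsymbol{\xi}\|_2^2=0$; in particular $E(D_N)=\|\boldsymbol{\xi}\|_2^2=0$ and $D_N$ already coincides with the centered quantity $D_N-\|\boldsymbol{\xi}\|_2^2$ appearing in Theorem~\ref{thm:asymptoticdist}.

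The first step is to establish $D_N/\sigma_k\cd N(0,1)$ assuming only Conditions~1 and~2. When $\boldsymbol{\xi}={\bf 0}$, Condition~4, namely $n^2\|\boldsymbol{\xi}\|_2^4=O(\vari)$, holds trivially because its left side is $0$ while $\vari=4\|{\bf P}_1\|_2^2>0$; and the lower bound on $\sigma_k$ supplied by Condition~3 enters the proof of Theorem~\ref{thm:asymptoticdist} only to dominate the cross terms between the signal $\boldsymbol{\xi}$ and the fluctuations of $D_N$, so it is no longer needed once those terms vanish. Concretely, I would follow the Poissonization used for Theorem~\ref{thm:asymptoticdist}: represent $(N_{c1},\dots,N_{ck})$ as independent counts $X_{ci}\sim\mathrm{Poisson}(n_cp_{ci})$ conditioned on $\sum_{i=1}^kX_{ci}=n_c$, write $\sum_{i=1}^kf^*(X_{1i},X_{2i})$ as a sum of independent mean-zero summands with total variance $\sigma_k^2$, verify a Lyapunov condition using Condition~2 to make the maximal per-cell variance negligible relative to $\sigma_k^2$, and then de-Poissonize by conditioning on the total counts as in Morris (1975). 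This yields $D_N/\sigma_k\cd N(0,1)$ under $H_0$.

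Finally, Lemma~\ref{lemma:ratioconsistency}, which requires only Conditions~1 and~2, gives $\hat\sigma_k^2/\sigma_k^2\cp1$, whence $\sigma_k/\hat\sigma_k\cp1$ by the continuous mapping theorem. Slutsky's theorem then gives
\[
T=\frac{D_N}{\hat\sigma_k}=\frac{D_N}{\sigma_k}\cdot\frac{\sigma_k}{\hat\sigma_k}\cd N(0,1),
\]
as asserted. The step I expect to be the main obstacle is the de-Poissonization: transferring the Poisson central limit theorem for $\sum_{i=1}^kf^*(X_{1i},X_{2i})/\sigma_k$ to the conditioning event $\{\sum_{i=1}^kX_{ci}=n_c,\ c=1,2\}$, whose probability tends to $0$, while checking that this transfer requires nothing beyond Conditions~1 and~2 once $\boldsymbol{\xi}={\bf 0}$; by contrast the Lyapunov estimate is routine once the third absolute moments of the Poisson summands are expanded and Condition~2 is applied.
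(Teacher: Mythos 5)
Your overall route --- specialize Theorem~\ref{thm:asymptoticdist} to $H_0$ (so $\boldsymbol{\xi}={\bf 0}$ and the centering vanishes), invoke Lemma~\ref{lemma:ratioconsistency} for $\hat\sigma_k^2/\sigma_k^2\cp 1$, and finish with Slutsky --- is exactly the paper's proof, which consists of precisely that one sentence. The disposal of Condition~4 under $H_0$ is also correct: $n^2\|\boldsymbol{\xi}\|_2^4=0$ trivially.

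The gap is in your treatment of Condition~3. You assert that the lower bound $n\,\vari\geq\epsilon$ enters the proof of Theorem~\ref{thm:asymptoticdist} ``only to dominate the cross terms between the signal $\boldsymbol{\xi}$ and the fluctuations,'' and on that basis you declare the Lyapunov step routine under Conditions~1--2 alone. That is not how Condition~3 is used in the paper. In the Lyapunov verification for the Poissonized sum (Lemma 4 of the paper), the eighth central moments $E(X_{ci}/n_c-p_{ci})^8=O(p_{ci}^4/n^4+p_{ci}/n^7)$ produce, after summing and dividing by $\sigma_k^4\asymp n^{-4}\var{4}$, a term of order $1/(n^3\var{4})=1/(n\,(n\vari)^2)$; this term has nothing to do with $\boldsymbol{\xi}$, survives under $H_0$, and converges to $0$ only because Condition~3 bounds $n\vari$ away from zero. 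The same condition is invoked for the Lyapunov condition for $U_{ck}$, and repeatedly in the equicontinuity Lemma (e.g.\ the terms $h^2 O(1/(n\vari))$ in $H_3$ and $K_2$), again in places where $\boldsymbol{\xi}$ plays no role. So your proposed proof, as written, cannot close the Lyapunov estimate from Conditions~1--2 alone. (To be fair, the paper is itself inconsistent here: the corollary's statement lists only Conditions~1 and~2, while the surrounding text says the test ``requests only conditions 1--3,'' and the one-line proof imports Theorem~\ref{thm:asymptoticdist} wholesale, hence Condition~3 with it. The fix is either to assume Condition~3 as the paper's prose does, or to supply a genuinely new argument for the degenerate regime $n\vari\to 0$, which you have not done.)
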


Corollary \ref{Theorem1} shows that
our proposed test is available for practical use under fairly mild conditions  $1-3$ of Theorem \ref{thm:asymptoticdist}.  
Based on Corollary \ref{Theorem1}, we reject $H_0$ if
\begin{eqnarray}
T > z_{1-\alpha}
\end{eqnarray}
where $z_{1-\alpha}$ is the $1-\alpha$ quantile of a standard normal distribution.
In practice, our test requests only conditions $1-3$ 
of Theorem \ref{thm:asymptoticdist} to have asymptotic size $\alpha$ test for a given $\alpha \in (0,1)$. 
Additionally,  it is of interest to investigate the power function of our proposed tests.
%In general,  the closed form of power function is not easily obtained, however,
% Theorem \ref{thm:asymptoticdist} with Lemma \ref{lemma:ratioconsistency}
%provide some insight on the power functions of two proposed tests under
%additional conditions  $3-10$ for $H_1$.
In particular, the power function from Theorem \ref{thm:asymptoticdist}
is meaningful  when the signal-to-noise ratio
$SNR \equiv  ||\boldsymbol{\xi} ||_2^2/\sigma_k$  for ${\boldsymbol \xi}={\bf P}_1 - {\bf P}_2$  is bounded,
i.e.,  $ SNR =O(1).$ which is the case that the asymptotic power is non-trivial in the sense that the power is in $(0,1)$.
Condition 4 in Theorem \ref{Theorem1} is equivalent to the condition that 
the SNR is bounded by some constant as $k\rightarrow \infty$.
%Additionally, with Theorem \ref{thm:asymptoticdist} and Lemma \ref{lemma:ratioconsistency}, we easily obtain the asymptotic power of the proposed test which is stated in the following corollary.
\begin{corollary} Under the conditions in Theorem \ref{thm:asymptoticdist}, we have
\begin{eqnarray}
P\left(T> z_{1-\alpha} \right) - \bar \Phi \left(z_{1-\alpha} - \frac{||{\bf P}_1 - {\bf P}_2 ||_2^2}{\sigma_k}    \right) \label{eqn:power1}
\rightarrow 0
%&&P\left(\frac{{\cal D}}{\hat \sigma} > z_{1-\alpha} \right) - \bar \Phi \left(z_{1-\alpha} - \frac{||{\bf P}_1 - {\bf P}_2 ||_2^2}{\sigma_k}    \right) \label{eqn:power2}
%\rightarrow 0
\end{eqnarray}
where $\bar \Phi (z)  = 1-\Phi(z) = P(Z>z)$ for a standard normal random variable $Z$ and $z_{1-\alpha}$ is  the $(1-\alpha)$ quantile of a standard normal distribution.
\label{cor:power}
\end{corollary}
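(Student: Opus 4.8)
The plan is to deduce the corollary by combining Theorem~\ref{thm:asymptoticdist}, the ratio consistency of $\hat\sigma_k^2$ from Lemma~\ref{lemma:ratioconsistency}, Slutsky's theorem, and P\'olya's uniform convergence theorem. Write $\eta_k = ||{\bf P}_1-{\bf P}_2||_2^2/\sigma_k$ and decompose the test statistic as $T = W_k + \eta_k$, where
\begin{eqnarray*}
W_k \;=\; \frac{\sumk f^*(N_{1i},N_{2i}) - ||\boldsymbol{\xi}||_2^2}{\hat\sigma_k} \;=\; \frac{\sumk f^*(N_{1i},N_{2i}) - ||\boldsymbol{\xi}||_2^2}{\sigma_k}\cdot\frac{\sigma_k}{\hat\sigma_k}.
\end{eqnarray*}
By Theorem~\ref{thm:asymptoticdist} the first factor converges in distribution to $N(0,1)$; by Lemma~\ref{lemma:ratioconsistency} we have $\hat\sigma_k^2/\sigma_k^2 \cp 1$, hence (continuous mapping) $\sigma_k/\hat\sigma_k \cp 1$, so Slutsky's theorem yields $W_k \cd N(0,1)$.

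Since the limiting distribution function $\Phi$ is continuous on $\IR$, P\'olya's theorem upgrades this to uniform convergence of the distribution functions, i.e. $\sup_{x\in\IR}|P(W_k \le x) - \Phi(x)| \to 0$, equivalently $\sup_{x\in\IR}|P(W_k > x) - \bar\Phi(x)| \to 0$. Now observe that the event $\{T > z_{1-\alpha}\}$ is exactly $\{W_k > z_{1-\alpha} - \eta_k\}$, so
\begin{eqnarray*}
\bigl|\,P(T > z_{1-\alpha}) - \bar\Phi(z_{1-\alpha} - \eta_k)\,\bigr| \;=\; \bigl|\,P(W_k > z_{1-\alpha} - \eta_k) - \bar\Phi(z_{1-\alpha} - \eta_k)\,\bigr| \;\le\; \sup_{x\in\IR}\bigl|P(W_k > x) - \bar\Phi(x)\bigr| \longrightarrow 0,
\end{eqnarray*}
which is the assertion of the corollary. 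I would also record, for context, that under Condition~4 together with $\sigma_k^2 = 2\sumk(p_{1i}/n_1 + p_{2i}/n_2)^2 \asymp n^{-2}\vari$ (using $n_1\asymp n_2\asymp n$) one gets $\eta_k^2 \asymp n^2||\boldsymbol{\xi}||_2^4/\vari = O(1)$, so the approximating quantity $\bar\Phi(z_{1-\alpha}-\eta_k)$ stays bounded away from $1$ and the power statement is non-trivial; this boundedness, however, is not needed for the limit itself.

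The one point requiring care is that this is not a single convergence-in-distribution statement: the evaluation point $z_{1-\alpha} - \eta_k$ drifts with $k$, so pointwise weak convergence of $W_k$ is not enough and the passage to P\'olya (uniform) convergence is the crux of the argument. The only other minor step is converting the ratio consistency $\hat\sigma_k^2/\sigma_k^2 \cp 1$ into $\sigma_k/\hat\sigma_k \cp 1$ so that Slutsky applies to the product form of $W_k$ rather than forcing one to work with $T - \eta_k$ and an estimated scale simultaneously.
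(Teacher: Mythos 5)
Your overall strategy is the right one, and it is exactly what the paper's one-line proof (``From Theorem 1 and Lemma 1, we have (18)'') leaves implicit: studentize, invoke the CLT of Theorem~\ref{thm:asymptoticdist} together with the ratio consistency of Lemma~\ref{lemma:ratioconsistency} via Slutsky, and then use P\'olya's theorem to handle the fact that the evaluation point $z_{1-\alpha}-\eta_k$ drifts with $k$. Identifying the need for uniform convergence is the genuinely important observation here.

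There is, however, an algebraic slip that breaks the argument as written. Since $T=\sumk f^*(N_{1i},N_{2i})/\hat\sigma_k$ and your $W_k$ is also normalized by $\hat\sigma_k$, the exact identity is $T=W_k+\|\boldsymbol{\xi}\|_2^2/\hat\sigma_k$, not $T=W_k+\eta_k$ with $\eta_k=\|\boldsymbol{\xi}\|_2^2/\sigma_k$. Consequently $\{T>z_{1-\alpha}\}=\{W_k>z_{1-\alpha}-\|\boldsymbol{\xi}\|_2^2/\hat\sigma_k\}$, where the threshold is \emph{random}, and your final display, which compares $P(W_k>z_{1-\alpha}-\eta_k)$ to $\bar\Phi(z_{1-\alpha}-\eta_k)$, does not follow directly from the sup bound. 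The repair is standard but uses an ingredient you explicitly set aside: since $\|\boldsymbol{\xi}\|_2^2/\hat\sigma_k-\eta_k=\eta_k(\sigma_k/\hat\sigma_k-1)$, you need $\eta_k=O(1)$ (which is exactly what Condition~4 gives, via $\sigma_k^2\asymp n^{-2}\vari$) to conclude that this difference is $o_p(1)$; so the boundedness of the signal-to-noise ratio is \emph{not} merely ``context'' but is required for the limit itself. With that in hand, a sandwich argument of the form
\begin{eqnarray*}
P\bigl(W_k>z_{1-\alpha}-\eta_k+\eps\bigr)-P\bigl(|\,\|\boldsymbol{\xi}\|_2^2/\hat\sigma_k-\eta_k|>\eps\bigr)\;\le\;P(T>z_{1-\alpha})\;\le\;P\bigl(W_k>z_{1-\alpha}-\eta_k-\eps\bigr)+P\bigl(|\,\|\boldsymbol{\xi}\|_2^2/\hat\sigma_k-\eta_k|>\eps\bigr),
\end{eqnarray*}
combined with your P\'olya bound and the uniform continuity of $\Phi$ (let $\eps\downarrow 0$ after $k\to\infty$), completes the proof. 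With this patch your argument is correct and is the natural fleshing-out of the paper's terse proof.
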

\begin{proof}
From Theorem \ref{Theorem1} and Lemma \ref{lemma:ratioconsistency}, we have (\ref{eqn:power1}).
\end{proof}

\begin{remark}
%\begin{enumerate}
%\item
It is clear that under $H_0$, the proposed test is asymptotically size-$\alpha$ test since $||{\bf P}_1 - {\bf P}_2||={\bf 0}$ under $H_0$.
%Under the $H_1$, if $\sigma > \sigma_k$, then the power of $T=D/\hat \sigma_k$ is larger than that of $T'=D/\hat \sigma$; otherwise
%$D/\hat \sigma$ has larger power than $D/\hat \sigma_k$.
%For example, if $n_1=n_2$, we have $\sigma_k \leq \sigma$ regardless of ${\bf p}_1$ and ${\bf p}_2$
%since  $    p_i^2 =  \left( \frac{p_{1i} + p_{2i} }{2} \right)^2  \geq p_{1i} p_{2i}$.
%For more general case, it is not straightforward to compare powers of $T$ and $T'$ analytically.
% general,  $\left(\frac{n_1p_{1i}+n_2p_{2i}}{n_1+n_2}\right)^2 - p_{1i}p_{2i}>0$
%if and only if $(n_1^2p_{1i}-n_2^2p_{2i})(p_{1i}-p_{2i})>0$.
%\item Corollary \ref{cor:power} shows that $T$ and $T'$ are asymptotically equivalent.
%Using $\frac{\sigma}{\sigma_k} \rightarrow 1$ in the proof of Corollary \ref{cor:power} and Lemma \ref{lemma:ratioconsistency},
%we have $\frac{\hat sigma_k}{\hat \sigma} \cp 1$ where the notation $\cp$ means convergence in probability.
%Therefore, in our numerical studies, we actually consider only $T$.
%\end{enumerate}
\end{remark}

In the following section, we provide the proof of Theorem \ref{thm:asymptoticdist}.

%$ \frac{{\cal T} -  \pdiff    }{\sigma_k}$ and then we consider the ratio consistent estimators of  $\sigma_k^2$
%and $\sigma^2$ in \ref{eqn:sigmak2} and \ref{eqn:sigma2}.
%Since the data comes from a multinomial distribution, all $D_i$s are correlated
%meaning that well known conditions for the Central Limit Theorem, such as Lyapounov's condition, which are based on the assumption of independence are not directly applicable.
%Morris \cite{Morris} presented an idea on the asymptotic distribution for multinomial data.
%In particular, Morris \cite{Morris} presented their asymptotic results on test statistics
%for one population problem, for example,
%testing $H_0 : {\bf P} = {\bf P}_0$  vs. $H_1 : {\bf P} \neq {\bf P}_0$ for a given ${\bf P}_0$.
%Morris \cite{Morris} also
%presented some conditions on the asymptotic normality of well known tests such as Pearson's $\chi^2$ test and the
%likelihood ratio test.
%In the following section,
%we provide the proof of Theorem \ref{thm:asymptoticdist}.
%On the other hand, our case is testing the homogeneity of two independent multinomial samples
%which is different from the case in Morris (1975). We provide detailed proof of Theorem \ref{thm:asymptoticdist}
% in the following section.
%%%%%%%%%%%%%%%%%%%%%%%%%%%%%%%%%

\section{Asymptotic Normality of the proposed tests}
\label{ProofOfNormality}
In this section we prove Theorem~\ref{Theorem1}.
% by extending the following lemma proven by Morris in \cite{Morris} for sums of multinomials.
The main difficulty is the dependency imposed by the multinomial distribution. In other words,
$f^*(N_{1i}, N_{2i})$s are not independent since $N_{ci}$s have dependency for $1\leq i \leq k$ from the multinomial distributions.
Therefore, it is not straightforward to apply the central limit theorem based on the assumption of independence.
Instead,  
Steck (1957) and Morris (1975) use conditional central limit theory for independent Poisson distributions conditioning on sums of Poisson variables to have the asymptotic normality of multinomial distributions. 
%Morris (1975)  which provided central limit theorem when data are generated from multinomial distribution.
%is to avoid dependency imposed by the multinomial distribution.
More specifically, to avoid the issue of dependency from the multinomial distribution, we use the fact that
the multinomial random vector $(N_{c1}, N_{c2},\ldots, N_{ck})$ has the same distribution as $(X_{c1}, \ldots, X_{ck}) | \sum_{i=1}^k X_{ci}=n_c $ where $X_{ci}$s are independent Poisson random variables with mean $\lambda_{ci}=n_cp_{ci}$.
Before we present our main results,
we first define the following notations:
\begin{eqnarray}
f_i(x_{1i},x_{2i}) &=& \underbrace{f^*(x_{1i}, x_{2i}) 
 - (p_{1i}-p_{2i})^2}_{{\cal G}_{1i} (x_{1i}, x_{2i})} \\
&&  \underbrace{ - 2(p_{1i}-p_{2i}) \left(\frac{x_{1i}}{n_1} -\frac{x_{2i}}{n_2} \right)
+2(p_{1i}-p_{2i})^2}_{{\cal G}_{2i}(x_{1i}, x_{2i} ) } \label{eqn:fi} \\
F_k &=&   \frac{\sum_{i=1}^k f_i(X_{1i}, X_{2i})}{\sigma_k} \nonumber \\
    &=& \frac{ \sum_{i=1}^k {\cal G}_{1i}(X_{1i}, X_{2i})}{\sigma_k}
+ \frac{ \sum_{i=1}^k {\cal G}_{2i}(X_{1i}, X_{2i})}{\sigma_k} \label{eqn:F_k} \\
U_{ck} &=& \frac{1}{\sqrt{n_c}} \sum_{i=1}^k (X_{ci} -\lambda_{ci})~~~\mbox{for $c=1,2$.}
\end{eqnarray}
We will show that   $(i)$  $ (F_k, U_{1k}, U_{2k})' \cd N_3 ( (0,0,0)', I_3 )$ which is a trivariate multinormal distribution
where $I_3$ is a  $3 \times 3$ identity matrix
and $(ii)$   $(F_k| U_{1k}=0, U_{2k}=0)  \cd N(0,1)$.
 %We show that
%$ \sumk (f(X_{1i},X_{2i})) =  \sumk (f^*(X_{1i},X_{2i})-(p_{1i}-p_{2i})^2)$
%and  $(X_{ci})_{1\leq i \leq k}$ conditioning on $\sumk X_{ci}=n_c$ is multinomial distribution.
%% all $ f(X_{1i}, X_{2i}) - (p_{1i}-p_{2i})^2$s in \ref{eqn:Test} are  independent random variables for $1\leq i \leq k$, meaning that
%%such cell counts follow multinomial distribution conditioning on the total sum.
% Thus, the asymptotic normality for multinomial data will be obtained from asymptotic normality for independent Poisson distributions conditioning on the total sum of counts.
 %Let $F_k = \frac{\sumk f(X_{1i}, X_{2i})  -  || \boldsymbol{\xi} ||_2^2  }{\sigma_k}$, $ U_{k1} = \frac{1}{\sqrt{n_1}} \sumk (X_{1i}-\lambda_{1i})$
%and $U_{k2} = \frac{1}{\sqrt{n_2}} \sumk (X_{2i}-\lambda_{2i})$, then
%we first show that $ (F_k,  U_{k1}, U_{k2})' \cd N_3((0,0,0), I_3)$ where $I_3$ is a $3\times 3$
%identity matrix and then
%$(F_k| U_{1k}=0, U_{2k}=0)  \cd N(0,1)$.
The latter case means that, under the condition of $U_{ck} =0$ (equivalently  $\sum_{i=1}^k X_{ci}=n_c$) for $c=1,2$,
 the conditional distribution of $ F_k$ is the same as that of $\frac{\sum_{i=1}^k f^*(N_{1i},N_{2i}) - ||\boldsymbol{\xi} ||_2^2}{\sigma_k} +\frac{\sum_{i=1}^k {\cal G}_{2i} (N_{1i},N_{2i})}{\sigma_k}$ since 
  $ [(X_{c1},\ldots, X_{ck})| U_{ck}=0 ]   \overset{d}{\equiv} (N_{c1},\ldots,N_{ck})$.
For the asymptotic normality of $(F_k| U_{1k}=0, U_{2k}=0)$,
we need the uniform equicontinuity for the conditional central limit theorem as stated in Theorem 2.1 in Steck (1957).
%Using Theorem 2.1 in Steck (1957),
For the uniform equicontinuity in Steck (1957),
we need to show that, for bounded values $|u_1|\leq \delta$ and $|u_2|\leq \delta$ for some $\delta>0$
and $h=\max(h_1,h_2)$,
the conditional characteristic function of $F_k$ given $U_{1k}=u_1$ and $U_{2k}=u_2$ satisfies
\begin{eqnarray*}
 &&\lim_{h\rightarrow 0} \sup_k \sup_{|u_1|\leq \delta, |u_2|\leq \delta} |E (e^{i t F_k} | U_{k1}=u_1+ h, U_{k2}=u_2+ h) - E (e^{it F} | U_{k1}=u_1, U_{k2}=u_2) |\\
 && \rightarrow 0. 
% &\leq& \lim_{h\rightarrow 0} \sup_k \sup_{|u_1|\leq \delta, |u_2|\leq \delta} E \exp \left( i \frac{t}{\sigma_k} \sumk (f_i( L_{1i} + M_{1i}, L_{2i} + M_{2i}) - f_i(L_{1i}, L_{2i})) \right)\\
% &\leq& \lim_{h\rightarrow 0} \sup_k \sup_{|u_1|\leq \delta, |u_2|\leq \delta}  \frac{|t|}{\sigma_k}  E \Bigg|\sumk  (f_i(L_{1i}+M_{1i}, L_{2i}+M_{2i}) -f_i(L_{1i}, L_{2i})) \Bigg| \\
% &\leq&  \lim_{h\rightarrow 0} \sup_k \sup_{|u_1|\leq \delta, |u_2|\leq \delta} \left( \frac{t^2}{\sigma^2_k}  E \left(\sumk  (f_i(L_{1i}+M_{1i}, L_{2i}+M_{2i}) -f_i(L_{1i}, L_{2i}))\right)^2    \right)^{1/2}.
 \end{eqnarray*}
We will show the uniform equicontinuity of the characteristic function in  Lemma \ref{lemmaMorrisCondition}. 
% Since $U_{kc} = u_c$ implies $ \sumk X_{ci}   =   n_c  +  u_c n_c^{1/2}$,
%  ${\bf X}_c$  has the same distribution as
%  ${\bf L}_{ck} +{\bf M}_{ck}$ for
%$ {\bf L}_{ck} \sim \mathscr{M}({\bf P}_c, k,n_c+ u_c n_c^{1/2})$ and $ {\bf M}_{ck}
% \sim  \mathscr{M}({\bf P}_c,k,h n_c^{1/2})$.

%
% For the conditional asymptotic normality, we need to show the followings :
%%To prove the asymptotic normality of  $ \frac{ \sumk (f^*(N_{1i}, N_{2i}) -(p_{1i}-p_{2i})^2)}{\sigma_k}$, we need
%\begin{enumerate}
%\item $\frac{ \sumk f(X_{1i}, X_{2i})}{\sigma_k}  \cd N(0,1)$
%\item the uniform equicontinuity of conditional characteristic function,
%\begin{eqnarray}
%	lim_{h \rightarrow 0}sup_k sup_v(1/\sigma_k^2)E \left[ \left( \sum_{i=1}^k   f(L_{1i}+M_{1i},L_{2i}+M_{2i}) - f(L_{1i},L_{2i})  \right)^2  \right] = 0
%\end{eqnarray}
%where
%${\bf L}_{ck}=(L_{c1},...,L_{ck})$
%and ${ \bf M}_{ck}=(M_{c1},...,M_{ck})$
%being independent multinomial vectors with
%$({\bf L}_{ck})\sim \mathscr{M}({\bf P}_c, k,n_c+u n_c^{1/2})$ and
%${\bf M}_{ck}
%\sim \mathscr{M}({\bf P}_c,k,hn_c^{1/2})$. $h$ and $v$ are such that $l_c = n_c + u n_c^{1/2}$ and $m_c = h_c n_c^{1/2}$ are nonnegative integers and $v$ is bounded as $k \rightarrow \infty$.
%\end{enumerate}
From Theorem 2.1 in Steck (1957),
the uniform equicontinuity of characteristic function of $F_k$
implies the conditional asymptotic normality of $F_k$ given $U_{1k}=U_{2k}=0$, i.e.,  $F_k|U_{1k}=0, U_{2k}=0 \cd N(0,1)$.

The following Lemmas, \ref{lemma:variance} and \ref{lemmaMorrisCondition}, 
will be used in showing the asymptotic multivariate normality of $(F_k, U_{1k}, U_{2k})$ and the uniform equicontinuity of the characteristic function of $F_k$ conditioning on $U_{1k}=0$ and $U_{2k}=0$.

In fact, the uniform equicontinuity of characteristic function becomes   
\begin{eqnarray*}
 &&\lim_{h\rightarrow 0} \sup_k \sup_{|u_1|\leq \delta, |u_2|\leq \delta} |E (e^{i t F_k} | U_{k1}=u_1+ h, U_{k2}=u_2+ h) - E (e^{it F} | U_{k1}=u_1, U_{k2}=u_2) |\\
 &\leq& \lim_{h\rightarrow 0} \sup_k \sup_{|u_1|\leq \delta, |u_2|\leq \delta} E \exp \left( i \frac{t}{\sigma_k} \sumk (f_i( L_{1i} + M_{1i}, L_{2i} + M_{2i}) - f_i(L_{1i}, L_{2i})) \right)\\
 &\leq& \lim_{h\rightarrow 0} \sup_k \sup_{|u_1|\leq \delta, |u_2|\leq \delta}  \frac{|t|}{\sigma_k}  E \Bigg|\sumk  (f_i(L_{1i}+M_{1i}, L_{2i}+M_{2i}) -f_i(L_{1i}, L_{2i})) \Bigg| \\
 &\leq&  \lim_{h\rightarrow 0} \sup_k \sup_{|u_1|\leq \delta, |u_2|\leq \delta} \left( \frac{t^2}{\sigma^2_k}  E \left(\sumk  (f_i(L_{1i}+M_{1i}, L_{2i}+M_{2i}) -f_i(L_{1i}, L_{2i}))\right)^2    \right)^{1/2} 
 \end{eqnarray*}
and it is sufficient to show that the last expression converges to 0. 

\begin{lemma}
When $X_{1i}$ and $X_{2i}$ for $1 \le i \le k$ are independent Poisson random variables with means $\lambda_{1i}=n_1p_{1i}$ and $\lambda_{2i}=n_2 p_{2i}$, respectively,
then
\begin{enumerate}
\item  $Ef_i(X_{1i},X_{2i})=0$.
\item  $Cov(\sumk f_i(X_{1i}, X_{2i}) , \sumk X_{ci})=0$ for $c=1,2$.
\item $Var(\sum_{i=1}^k f_i(X_{1i},X_{2i})) =  \sum_{i=1}^k Var(f_i(X_{1i}, X_{2i}))
=  2\sumk \left(\frac{p_{1i}}{n_1} + \frac{p_{2i}}{n_2} \right)^2$.
\end{enumerate}
\label{lemma:variance}
\end{lemma}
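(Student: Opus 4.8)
The plan is to verify the three claims by direct computation using independence of the Poisson coordinates and the known Poisson moment identities $E X = \lambda$, $E X^{(2)} = E[X(X-1)] = \lambda^2$, $E X^{(3)} = \lambda^3$, $E X^{(4)} = \lambda^4$, where factorial moments are the natural tool here because $f^*$ was constructed precisely so that $E[(X/n)^2 - X/n^2] = E[X(X-1)]/n^2 = \lambda^2/n^2 = p^2$.

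\textbf{Part 1 (mean zero).} First I would recall from (\ref{eqn:fi}) that $f_i = {\cal G}_{1i} + {\cal G}_{2i}$ where ${\cal G}_{1i} = f^*(X_{1i},X_{2i}) - (p_{1i}-p_{2i})^2$ and ${\cal G}_{2i} = -2(p_{1i}-p_{2i})(X_{1i}/n_1 - X_{2i}/n_2) + 2(p_{1i}-p_{2i})^2$. Taking expectations: $E f^*(X_{1i},X_{2i}) = (\lambda_{1i}/n_1 - \lambda_{2i}/n_2)^2 = (p_{1i}-p_{2i})^2$ by the unbiasedness already established for ${\cal D}$ (each summand of ${\cal D}$ has this expectation), so $E{\cal G}_{1i} = 0$; and $E(X_{1i}/n_1 - X_{2i}/n_2) = p_{1i}-p_{2i}$, so $E{\cal G}_{2i} = -2(p_{1i}-p_{2i})^2 + 2(p_{1i}-p_{2i})^2 = 0$. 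Hence $Ef_i(X_{1i},X_{2i})=0$.

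\textbf{Part 2 (zero covariance with the total).} Fix $c$; I would show $Cov(f_j(X_{1j},X_{2j}), X_{cj}) = 0$ for each $j$, which suffices since distinct coordinates are independent. Because $X_{1j} \perp X_{2j}$ and $f_j$ is a quadratic in $(X_{1j}/n_1, X_{2j}/n_2)$, the key fact is that for an independent pair the covariance $Cov(g(X_{1j},X_{2j}), X_{cj})$ only sees the part of $g$ that is affine-plus-higher in $X_{cj}$; concretely one computes $Cov(X_{cj}, X_{cj}) = \lambda_{cj}$ and $Cov(X_{cj}^2, X_{cj}) = 2\lambda_{cj}^2 + \lambda_{cj}$, and then checks that the coefficient structure of $f^*$ — namely the $-X_{cj}/n_c^2$ correction terms — exactly cancels the contribution of the $X_{cj}$-linear piece coming from the squared term together with the $-2(p_{1j}-p_{2j})(\pm X_{cj}/n_c)$ term in ${\cal G}_{2j}$. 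I expect this to be the most computation-heavy of the three, but it is a short finite check once the Poisson covariances above are plugged in.

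\textbf{Part 3 (variance formula).} The additivity $Var(\sum_i f_i) = \sum_i Var(f_i)$ is immediate from independence across $i$. For the per-coordinate variance I would expand $f_i = {\cal G}_{1i} + {\cal G}_{2i}$ and compute $Var(f_i) = Var({\cal G}_{1i}) + 2Cov({\cal G}_{1i},{\cal G}_{2i}) + Var({\cal G}_{2i})$ using $X_{1i}\perp X_{2i}$ and the factorial moments up to order four; collecting terms should yield $2(p_{1i}/n_1 + p_{2i}/n_2)^2$, which matches $\sigma_k^2$ in (\ref{eqn:var}). The main obstacle is purely bookkeeping: keeping track of the fourth-moment terms from $(X_{1i}/n_1 - X_{2i}/n_2)^4$ and confirming that the $-X_{ci}/n_c^2$ subtractions in $f^*$ kill the $O(1/n^3)$ cross terms so that only the clean squared expression survives. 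A clean way to organize it is to write $Y_i := X_{1i}/n_1 - X_{2i}/n_2$ and $R_i := X_{1i}/n_1^2 + X_{2i}/n_2^2$ so that $f^*(X_{1i},X_{2i}) = Y_i^2 - R_i$ and ${\cal G}_{2i} = -2\xi_i Y_i + 2\xi_i^2$ with $\xi_i = p_{1i}-p_{2i}$, then compute $Var(Y_i^2 - R_i - 2\xi_i Y_i)$ directly; since $EY_i = \xi_i$, one is really computing the variance of a quadratic form in a centered Poisson-difference variable, and the answer comes out as stated.
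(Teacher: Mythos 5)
Your proposal is correct and follows essentially the same route as the paper's own proof: coordinate-wise independence reduces everything to per-index Poisson moment computations, the covariance in Part 2 vanishes by the same cancellation between the $-X_{cj}/n_c^2$ corrections, the linear piece of the squared term, and the $-2\xi_j(\cdot)$ term, and the variance in Part 3 is obtained by the same expansion of the quadratic (your centering via $Y_i - \xi_i$ is a mild bookkeeping simplification of the paper's decomposition $Var(f^*) + 4\xi_i^2 Var(\cdot) - 4\xi_i Cov(\cdot,\cdot)$, yielding the identical answer $2(p_{1i}/n_1 + p_{2i}/n_2)^2$). No gaps.
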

\begin{proof} See Supplementary material.
\end{proof}

%We need to show that the u.a.n. condition, shown in (\ref{uan}), holds.

%From Lemma \ref{lemma:variance}, we have
%$D_i$s are independent due to independence of all $X_{1i}$s and $X_{2i}$s,
%so if the u.a.n. condition is true, we can apply the Lindeberg-Feller Theorem and obtain
%\begin{eqnarray}
%\frac{D}{\sqrt{\sum_{i=1}^k Var(D_i) }}  \cd N(0,1)
%\end{eqnarray}
%where $\cd$ represents the convergence in distribution.

The following lemma ensures that the convergence of characteristic function of $\sum_{i=1}^k f(X_{1i}, X_{2i})$  based on independent Poisson distributions conditioning on $\sum_{i=1}^k X_{1i}$ and $\sum_{i=1}^k X_{2i}$ which come from multinomial distributions.
\begin{lemma} \label{lemmaMorrisCondition}
When  ${\bf L}_{ck}=(L_{c1},...,L_{ck})$ and ${\bf M}_{ck}=(M_{c1},...,M_{ck})$ are
independent multinomial vectors  for $c=1,2$ with ${\cal L}({\bf L}_{ck}) = {\cal M}({\bf P}_c, k, n_c+ u_c n_c^{1/2})$ and ${\cal L}({\bf M}_{ck})={\cal M}({\bf P}_c, k, h_c n_c^{1/2})$.
$h_c$ and $u_c$ are such that $l_c = n_c + u_c n_c^{1/2}$ and $m_c = h_c n_c^{1/2}$ are nonnegative integers and $u_c$ is bounded
 (say $|u_c|\leq \delta$ for some constant $\delta$, $c=1,2$)as $k \rightarrow \infty$.
Under the conditions in  Theorem \ref{thm:asymptoticdist},
for $h=\max(h_1, h_2)$, we have 
\begin{align}
	\lim_{h \rightarrow 0} \sup_k \sup_{|u_1|\leq \delta, |u_2|\leq \delta}\frac{1}{\sigma_k^2}E \left[ \left( \sum_{i=1}^k   f_i(L_{1i}+M_{1i},L_{2i}+M_{2i}) - f_i(L_{1i},L_{2i})  \right)^2  \right] = 0. \label{eqn:MorrisCondition}
\end{align}
\end{lemma}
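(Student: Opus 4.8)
The plan is to write the increment $\Delta_i:=f_i(L_{1i}+M_{1i},L_{2i}+M_{2i})-f_i(L_{1i},L_{2i})$ explicitly, using that $f^*$ (hence $f_i$) is quadratic, and to exploit that every surviving term carries a factor of an $M$-variable, which is small since $m_c=h_c\sqrt{n_c}$ and $\sum_i M_{ci}=m_c$. First I would record, by a one-line Taylor expansion of the quadratic $f_i$ about $(L_{1i},L_{2i})$,
\[
\Delta_i \;=\; \Bigl(\tfrac{M_{1i}}{n_1}-\tfrac{M_{2i}}{n_2}\Bigr)^{2} \;+\; a_{1i}M_{1i}\;+\;a_{2i}M_{2i},
\]
with $a_{1i}=\tfrac{2L_{1i}-1}{n_1^{2}}-\tfrac{2L_{2i}}{n_1n_2}-\tfrac{2\xi_i}{n_1}$ and $a_{2i}=\tfrac{2L_{2i}-1}{n_2^{2}}-\tfrac{2L_{1i}}{n_1n_2}+\tfrac{2\xi_i}{n_2}$, which depend only on $({\bf L}_{1k},{\bf L}_{2k})$. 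Summing over $i$ and using $(x+y+z)^2\le 3(x^2+y^2+z^2)$, it suffices to bound $\sigma_k^{-2}E[Q^2]$ and $\sigma_k^{-2}E[(\sum_i a_{ci}M_{ci})^2]$ for $c=1,2$, where $Q:=\sum_i(M_{1i}/n_1-M_{2i}/n_2)^2$. I would use throughout $\sigma_k^{2}\asymp n^{-2}\,\var{2}$, the inequality $\|{\bf P}_1\|_2^{2}+\|{\bf P}_2\|_2^{2}\le\var{2}$ (valid because ${\bf P}_1\cdot{\bf P}_2\ge0$), $l_c\asymp n_c\asymp n$, $m_c=h_c\sqrt{n_c}$, and the observation that on any configuration with $\max(h_1,h_2)=h>0$ some $m_c\ge1$, forcing $\sqrt{n_c}=m_c/h_c\ge 1/h$ and hence $n^{-1/2}\lesssim h$; thus it is enough to show each ratio is $O_{\delta,\epsilon}(h^{2})$ uniformly in $k$ once $h$ is small.

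For the quadratic term $Q$, since the $M_{ci}$ are nonnegative integers with $\sum_i M_{ci}=m_c$ one has $\sum_i M_{ci}^{2}\le m_c^{2}$ deterministically, so $0\le Q\lesssim h^{2}/n$. Writing $\sum_i M_{ci}^{2}=m_c+\sum_i M_{ci}(M_{ci}-1)$ peels off the deterministic part; bounding $0\le\sum_i M_{ci}(M_{ci}-1)\le m_c^{2}$ with $E\sum_i M_{ci}(M_{ci}-1)=m_c(m_c-1)\|{\bf P}_c\|_2^{2}$ gives $\mathrm{Var}(\sum_i M_{ci}^{2})\le m_c^{4}\|{\bf P}_c\|_2^{2}$, and using independence of ${\bf M}_{1k},{\bf M}_{2k}$ with $\sum_i M_{1i}M_{2i}\le m_1m_2$ gives $\mathrm{Var}(\sum_i M_{1i}M_{2i})\le m_1^{2}m_2^{2}({\bf P}_1\cdot{\bf P}_2)$ and $|E Q|\lesssim h\,n^{-3/2}+h^{2}n^{-1}\var{2}$. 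Assembling $E[Q^{2}]=(EQ)^{2}+\mathrm{Var}(Q)$ and dividing by $\sigma_k^{2}$, every piece collapses — using Condition 3 in the form $n\var{2}\ge\epsilon$ for the $(EQ)^2$ part, and $\var{2}\le 4$, ${\bf P}_1\cdot{\bf P}_2\le\tfrac12\var{2}$, $\|{\bf P}_c\|_2^{2}\le\var{2}$ for the rest — to $O(h^{2})+O(h^{4})=O(h^{2})$.

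For the linear term I would condition on $({\bf L}_{1k},{\bf L}_{2k})$, so the $a_{1i}$ are constants and $(M_{1i})_i$ is multinomial, giving $E[(\sum_i a_{1i}M_{1i})^{2}\mid{\bf L}]\le m_1\sum_i a_{1i}^{2}p_{1i}+m_1^{2}(\sum_i a_{1i}p_{1i})^{2}$. The decisive point is that the recentering $-2\xi_i(x_1/n_1-x_2/n_2)$ inside $f_i$ was put there precisely so that the order-$n^{-1/2}$ part of $E[a_{1i}]$ cancels: using $E L_{ci}=l_cp_{ci}$ and $l_c=n_c+u_c\sqrt{n_c}$ one finds $E[a_{1i}]=2u_1p_{1i}n_1^{-3/2}-2u_2p_{2i}n_1^{-1}n_2^{-1/2}-n_1^{-2}=O_\delta((p_{1i}+p_{2i})n^{-3/2}+n^{-2})$, while $\mathrm{Var}(a_{1i})=4l_1p_{1i}(1-p_{1i})n_1^{-4}+4l_2p_{2i}(1-p_{2i})n_1^{-2}n_2^{-2}=O((p_{1i}+p_{2i})n^{-3})$, so $E[a_{1i}^{2}]=O((p_{1i}+p_{2i})n^{-3}+n^{-4})$. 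Taking $E_{\bf L}$ and using $\sum_i p_{1i}(p_{1i}+p_{2i})\le\var{2}$, the first term is $O(m_1 n^{-3}\var{2}+m_1n^{-4})$, which over $\sigma_k^{2}$ is $O(m_1/n)+O(m_1/(n^{2}\var{2}))=O(h^{2})$ by Condition 3 and $n^{-1/2}\lesssim h$; and via the multinomial variance formula for $\sum_i p_{1i}L_{ci}$ one gets $\mathrm{Var}(\sum_i a_{1i}p_{1i})=O(n^{-3}\max_i(p_{1i}+p_{2i})\var{2})$ and $(\sum_i p_{1i}Ea_{1i})^{2}=O(n^{-3}\var{4})$, so $m_1^{2}E_{\bf L}[(\sum_i a_{1i}p_{1i})^{2}]/\sigma_k^{2}=O(m_1^{2}n^{-1}(\max_i(p_{1i}+p_{2i})+\var{2}))=O(h^{2})$ since $m_1^{2}\asymp h^{2}n$ and $\max_ip_{ci}\le1$. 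The $c=2$ term is identical by symmetry. Combining the three bounds gives $\sigma_k^{-2}E[(\sum_i\Delta_i)^{2}]=O_{\delta,\epsilon}(h^{2})$ uniformly in $k$ and in $|u_1|,|u_2|\le\delta$, so letting $h\to0$ yields \eqref{eqn:MorrisCondition}.

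The main obstacle is the bookkeeping in the last step: one must verify that the order-$n^{-1/2}$ contributions to $E[a_{ci}]$ really cancel — this cancellation, and the Poisson bias corrections $-x_c/n_c^{2}$ hidden in $f^{*}$, is the only place the precise form of the statistic enters — and for each $p$- or $\xi$-weighted sum that appears one must choose the $\ell^p$-norm bound sharp enough that, after substituting $m_c=h_c\sqrt{n_c}$ and dividing by $\sigma_k^{2}\asymp n^{-2}\var{2}$, a strictly positive power of $h$ (or of $n^{-1/2}\lesssim h$) survives. The naive bound that drops the negative within-multinomial correlations, or that ignores the cancellation in $E[a_{ci}]$, can diverge over $k$ for fixed $h$, so this tightening is genuinely needed.
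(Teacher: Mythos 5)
Your argument is correct, and while it starts from the same place as the paper --- the exact Taylor expansion of the quadratic $f_i$, which both of you use to write the increment as (quadratic in $M$) $+$ (linear in $M$) --- the way you organize and estimate the resulting sum is genuinely different. The paper groups the $L$--$M$ cross term together with the pure $M$-quadratic into $Q_i = 2A_iB_i + B_i^2$ and keeps a separate linear term $R_i$ with \emph{deterministic} coefficients; it then expands $E\bigl(\sum_i(Q_i+R_i)\bigr)^2$ into five families of cross-moments ($W_1,\dots,W_5$) and bounds each one via explicit higher-order multinomial moment formulas (Lemma.S2) together with the auxiliary inequalities of Lemma.S1, several of which invoke Condition 4 through quantities like $n\|\boldsymbol{\xi}\|_2^4$ and $n(|\boldsymbol{\xi}|\cdot{\bf P}_c)^2$. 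You instead fold the $L$--$M$ cross terms into \emph{random} linear coefficients $a_{ci}({\bf L})$, condition on ${\bf L}$ so that the exact multinomial variance identity $\mathrm{Var}(\sum_i c_iM_{ci})=m_c[\sum_ic_i^2p_{ci}-(\sum_ic_ip_{ci})^2]$ applies directly, and control the pure $M$-quadratic via the deterministic constraint $\sum_iM_{ci}=m_c$. Two features of your route are worth highlighting. First, the exact cancellation $E[a_{1i}]=2u_1p_{1i}n_1^{-3/2}-2u_2p_{2i}n_1^{-1}n_2^{-1/2}-n_1^{-2}$ eliminates every appearance of $\boldsymbol{\xi}$, so your proof never uses Condition 4 --- it establishes the lemma under Conditions 1--3 alone, which is slightly stronger than what the paper's argument delivers. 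Second, the observation that a \emph{positive integer} $m_c=h_c\sqrt{n_c}$ forces $n^{-1/2}\lesssim h$ converts all of your bounds into a single clean $O_{\delta,\epsilon}(h^2)$ uniform in $k$, whereas the paper has to argue separately that various ratios are $o(1)$ or $O(h)$. The only blemish is that you drop the $O(n^{-4})$ contribution to $\bigl(\sum_ip_{1i}E[a_{1i}]\bigr)^2$; it is harmless, since $m_1^2n^{-4}/\sigma_k^2\lesssim h^2/(n\|{\bf P}_1+{\bf P}_2\|_2^2)\le h^2/\epsilon$ by Condition 3, but it should be carried along for completeness.
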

\begin{proof} See supplementary material.  \end{proof}

The following lemma shows that $\sumk f(X_{1i},X_{2i})$ has the asymptotic normality 
when $X_{ci}$s are independent poisson distributions.    

\begin{lemma} 
When $X_{1i}$ and $X_{2i}$ for $1 \le i \le k$ are independent Poisson random variables with means $\lambda_{1i}=n_1p_{1i}$ and $\lambda_{2i}=n_2 p_{2i}$, respectively,
then  
\begin{eqnarray}
\frac{\sum_{i=1}^k f(X_{1i},X_{2i})}{\sigma_k} \cd N(0,1). 
\end{eqnarray}
\label{lemma:normal_poisson}
\end{lemma}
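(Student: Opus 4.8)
The plan is to apply Lyapunov's central limit theorem to the independent summands $f_i(X_{1i},X_{2i})$, where $f_i$ is the centered function defined in~(\ref{eqn:fi}) (which reduces to $f^\ast$ of~(\ref{eqn:fstar}) under $H_0$). Because the pairs $(X_{1i},X_{2i})$, $1\le i\le k$, are independent, so are the variables $f_i(X_{1i},X_{2i})$, and by Lemma~\ref{lemma:variance} they are centered with $\sum_{i=1}^k Var\big(f_i(X_{1i},X_{2i})\big)=\sigma_k^2$. It therefore suffices to verify the Lyapunov condition with exponent $4$:
\[
\frac{1}{\sigma_k^4}\sum_{i=1}^k E\big[f_i(X_{1i},X_{2i})^4\big]\longrightarrow 0.
\]

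To bound the fourth moments I would rewrite $f_i(X_{1i},X_{2i})=(\widetilde Y_i^2-v_i)-Z_{1i}/n_1^2-Z_{2i}/n_2^2$, where $Z_{ci}=X_{ci}-\lambda_{ci}$, $\widetilde Y_i=Z_{1i}/n_1-Z_{2i}/n_2$, and $v_i=p_{1i}/n_1+p_{2i}/n_2=E\widetilde Y_i^2$, so that each of the three pieces is centered. By the power-mean inequality $E[f_i^4]\le 27\big(E(\widetilde Y_i^2-v_i)^4+E(Z_{1i}/n_1^2)^4+E(Z_{2i}/n_2^2)^4\big)$, so I only have to estimate these three terms. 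Using the classical Touchard-polynomial expression for central Poisson moments (which gives $|EZ_{ci}^m|\le C_m(\lambda_{ci}+\lambda_{ci}^{\lfloor m/2\rfloor})$), together with $\lambda_{ci}=n_cp_{ci}$, $p_{ci}/n_c\le v_i$, and $n_1\asymp n_2\asymp n$ from Condition~1, I expect each of the three terms to be $O(v_i^4+v_i/n^6)$ --- the intermediate contributions that arise, such as $v_i^2/n^4$ and $v_i^3/n^2$, being absorbed into $v_i^4+v_i/n^6$ by weighted AM-GM --- hence $E[f_i(X_{1i},X_{2i})^4]\le C(v_i^4+v_i/n^6)$.

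It then remains to sum over $i$ and divide by $\sigma_k^4=4\big(\sum_i v_i^2\big)^2$, using $\sum_i v_i=1/n_1+1/n_2\asymp 1/n$ and $\sum_i v_i^2=\sigma_k^2/2$. The leading part contributes $\sum_i v_i^4/\big(\sum_i v_i^2\big)^2\le \max_i v_i^2/\sum_i v_i^2$, which tends to $0$ because, using $(p_{1i}+p_{2i})^2\ge p_{1i}^2+p_{2i}^2$, one has $\max_i v_i^2/\sum_i v_i^2\le 2\max_c\big(\max_i p_{ci}^2/\|{\bf P}_c\|_2^2\big)\to 0$ by Condition~2. The remaining part contributes a quantity $\asymp 1/\big(n^7(\sum_i v_i^2)^2\big)$, which vanishes once $\sum_i v_i^2\asymp\|{\bf P}_1+{\bf P}_2\|_2^2/n^2\gtrsim 1/n^3$, and $\|{\bf P}_1+{\bf P}_2\|_2^2\gtrsim 1/n$ is exactly Condition~3. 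Hence the Lyapunov condition holds and $\sum_{i=1}^k f_i(X_{1i},X_{2i})/\sigma_k\cd N(0,1)$. Only Conditions~1--3 are used; the same Cram\'er--Wold/Lyapunov argument, now with the linear statistics $(X_{ci}-\lambda_{ci})/\sqrt{n_c}$ adjoined, yields the trivariate normality needed for Theorem~\ref{thm:asymptoticdist}.

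\textbf{Main obstacle.} The CLT machinery is routine; the real work is the fourth-moment estimate, and in particular recognizing that the ``low-order'' contributions above --- which appear because a Poisson$(\lambda)$ variable with small $\lambda$ has every central moment of order $\lambda$ rather than of order $\lambda^{m/2}$, so that in the sparse regime they dominate the ``Gaussian'' term $v_i^4$ --- are controlled precisely, and only, by Condition~3. This is also the structural reason the conclusion can fail without a restriction of the type $k=O(n)$ when $p_{ci}\asymp 1/k$: there only a bounded number of cells contribute a nonzero $f^\ast$, so $\sum_i f^\ast$ converges to a compound-Poisson rather than a Gaussian limit.
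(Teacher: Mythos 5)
Your proof is correct, and while it runs on the same engine as the paper's --- Lyapunov's condition with exponent $4$ for the independent Poisson summands --- it is organized around a genuinely different decomposition. The paper keeps the split $f_i=\mathcal{G}_{1i}+\mathcal{G}_{2i}$ from (\ref{eqn:fi}), verifies Lyapunov for the quadratic part $\sum_i\mathcal{G}_{1i}/\sigma_k$ (bounding eighth central Poisson moments), and separately shows $\sum_i\mathcal{G}_{2i}/\sigma_k\cp 0$ via $Var\bigl(\sum_i\mathcal{G}_{2i}\bigr)=O\bigl(n^{-1}\sum_i\xi_i^2(p_{1i}+p_{2i})\bigr)=o(\sigma_k^2)$; both steps invoke Condition 4 and Lemma S1. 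You instead complete the square to get the exact identity $f_i=(\tilde Y_i^2-v_i)-Z_{1i}/n_1^2-Z_{2i}/n_2^2$, which absorbs the linear correction $\mathcal{G}_{2i}$ into the recentered quadratic, and then verify Lyapunov for $f_i$ directly. This buys two things. First, you use only Conditions 1--3, making explicit that Condition 4 is not needed for this lemma (it only enters later, to control $\sum_i\mathcal{G}_{2i}(N_{1i},N_{2i})$ in the passage to the multinomial and to keep the SNR bounded). Second, you avoid a variance-matching subtlety in the paper's route: since $\sum_iVar(\mathcal{G}_{1i})=\sigma_k^2+4\sum_i\xi_i^2v_i\neq\sigma_k^2$, normalizing $\sum_i\mathcal{G}_{1i}$ by $\sigma_k$ requires the additional (and in the paper only implicit) step $\sum_i\xi_i^2v_i=o\bigl(\sum_iv_i^2\bigr)$, which your one-shot argument never needs. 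Your moment bookkeeping checks out: $Ef_i^4\le C(v_i^4+v_i/n^6)$ after absorbing the intermediate terms $v_i^2/n^4$ and $v_i^3/n^2$ by weighted AM--GM; then $\sum_iv_i^4/(\sum_iv_i^2)^2\le\max_iv_i^2/\sum_iv_i^2\rightarrow 0$ follows from Condition 2, and $\sum_iv_i/\bigl(n^6(\sum_iv_i^2)^2\bigr)\asymp n^{-1}\bigl(n\|{\bf P}_1+{\bf P}_2\|_2^2\bigr)^{-2}\rightarrow 0$ follows from Conditions 1 and 3.
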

\begin{proof} 
See the Supplementary material. 
\end{proof}

Based on the lemmas, we prove Theorem \ref{Theorem1}.
In fact, Theorem \ref{Theorem1} is the case  
when independent poisson random variables $X_{ci}$s in Lemma \ref{lemma:normal_poisson} can be replaced by 
the multinomial distributions $N_{ci}$s.

\bigskip
\noindent {\bf Proof of Theorem \ref{Theorem1}} :
%From the conditions 1-3 in Theorem \ref{Theorem1},
%we have the asymptotic normality of $U_{kc}=\frac{1}{n_c} \sumk (X_{ci} -\lambda_{ci}) \cd N(0,1)$.
%Furthermore, from and Lemma \ref{lemma:variance},
%we also have $\frac{\max_i s_i^2}{\sigma_k^2} =  \frac{\maxp + \frac{a_k}{k}}{\pp + a_k} \rightarrow 0$, therefore
% $ \frac{\sumk f(X_{1i},X_{2i})}{\sigma_k}$ has asymptotically a standard normal distribution.
%Using the lemma 2.1 in Morris (1975), we have
%tri-variate multinomial distribution such that  $ (F_k, U_{k1}, U_{k2})' \cd  N_3((0,0,0)', I_3)$.
%
%From Lemma \ref{Apdx:lemmaMorrisCondition}, the uniformly equicontinuity of characteristic function is satisfied, so
%the conditional asymptotic distribution of  $ \frac{\sumk f(X_{1i},X_{2i})}{\sigma_k}  |U_1=U_2=0$ is achieved, so
%using the fact that ${\bf X}_{c}|U_c=0 = {\bf N}_c  \sim   Multinomial({\bf P}_c, n_c, k)$, we have
%the asymptotic normality of  $\frac{\sumk (f^*(N_{1i},N_{2i}) -(p_{1i}-p_{2i})^2) }{\sigma_k}$.
%
%\bigskip
Lemma \ref{lemma:normal_poisson} shows  $F_k = \frac{\sum_{i=1}^k f_i(X_{1i},X_{2i})}{\sigma_k}  \cd N(0,1)$.
%since $\sigma_k^2$ is $Var(\sum_{i=1}^k f_i(X_{1i}, X_{2i})) $ from  Lemma \ref{lemma:variance}.
We also have $U_{ck} = \frac{1}{\sqrt{n_c}} \sumk (X_{ci} -\lambda_{ci})  \cd N(0,1)$ for $c=1,2$ from 
the Lyapounov' condition : $ \frac{\sum_{i=1}^k E(X_{ci} -\lambda_{ci})^4}{(\sum_{i=1}^k \lambda_i^2)} 
=  \frac{ 3\sum_{i=1}^k \lambda_i^2 + \sum_{i=1}^k \lambda_i}{(\sum_{i=1}^k \lambda_i^2)}  = \frac{3}{n^2 \sum_{i=1}^k p_{ci}^2} 
+ \frac{3}{n^3 (\sum_{p_{ci}^2})} \rightarrow 0 $ from the condition 3 in Theorem \ref{Theorem1}.   
%Furthermore, since $\frac{\max_i s_i^2}{\sigma_k^2} = \frac{\maxp + \frac{a_k}{k}}{\pp + a_k} \rightarrow 0$ from 3 in Lemma \ref{Lemma2},
%we have  the asymptotic normality of $ \frac{\sumk f(X_{1i},X_{2i})}{\sigma_k}$.
Using Lemma \ref{lemma:variance} and independence of ${\bf X}_1$ and ${\bf X}_2$,
we have the result that  $ \frac{\sumk f(X_{1i},X_{2i})}{\sigma_k}$, $U_{1k}$ and $U_{2k}$ are uncorrelated to each other. Therefore,
 using Lemma 2.1 in Morris (1975), we have tri-variate asymptotic normality of $(F_k, U_{1k}, U_{2k})$, i.e.,
 $(F_k, U_{1k}, U_{2k})' \cd  N_3( (0,0,0)',  I_3)$ where $I_3$ is a $3\times 3$ identity matrix.
Lemma \ref{lemmaMorrisCondition} shows the uniform equicontinuity of conditional characteristic function of $F_k$ given $U_{1k}$ and $U_{2k}$, so we have
$ (F_k|U_{1k}=U_{2k}=0) \cd  N(0,1)$, in other words 
\begin{eqnarray}
(F_k|U_{1k}=U_{2k}=0) \overset{d}{\equiv} \frac{\sumk f_i(N_{1i},N_{2i})}{\sigma_k} 
\cd  N(0,1). 
\label{eqn:Fk_normal} 
\end{eqnarray}

From (\ref{eqn:F_k}), conditioning on $U_{1i}=U_{2k}=0$,   
we have 
$ \frac{\sumk f^*(N_{1i},N_{2i}) -||\boldsymbol{\xi}||_2^2} {\sigma_k}  
= \frac{\sum_{i=1}^k {\cal G}_{1i}(N_{1i},N_{2i})}{\sigma_k} = \frac{\sumk f_i(N_{1i},N_{2i}) }{\sigma_k}    - \frac{\sum_{i=1}^k {\cal G}_{2i}(N_{1i},N_{2i})}{\sigma_k}.$ 
From (\ref{eqn:Fk_normal}), we only  to show
$\frac{\sum_{i=1}^k {\cal G}_{2i}(N_{1i},N_{2i})}{\sigma_k} \cp 0$ to have the asymptotic normality of  $\frac{\sum_{i=1}^k {\cal G}_{1i}(N_{1i},N_{2i})}{\sigma_k}$.
%Since our final goal is to show that $ \frac{\sum_{i=1}^k {\cal G}_{1i} (N_{1i}, N_{2i})}{\sigma_k} \cd N(0,1)$,
%we only need to show $\sum_{i=1}^k  \frac{{\cal G}_{2i}(N_{1i}, N_{2i})}{\sigma_k} \cp 0$.
For this, it is enough to show $Var( \sum_{i=1}^k {\cal G}_{2i}(N_{1i},N_{2i}))=o(\sigma_k^2)$
since  $ E \left(\sum_{i=1}^k  {\cal G}_{2i}(N_{1i},N_{2i}) \right)  =0$. 
Using $Var(N_{ci})=n_c p_{ci}(1-p_{ci})$ and $Cov(N_{ci}, N_{ci}) = - n_c p_{ci}p_{cj}$ for $c=1,2$, we have
\begin{eqnarray*}
Var (\sum_{i=1}^k {\cal G}_{2i}(N_{1i}, N_{2i}) ) 
&=& \sum_{i=1}^k \xi_i^2 \left(\frac{p_{1i}(1-p_{1i})}{n_1} +  \frac{p_{2i}(1-p_{2i})}{n_2}\right)
- \sum_{i \neq j} \xi_i \xi_j \left( \frac{p_{1i}p_{1j}}{n_1} + \frac{p_{2i}p_{2j}}{n_2}  \right)\\
&=& \sum_{i=1}^k \xi_i^2 \left(\frac{p_{1i}}{n_1} +  \frac{p_{2i}}{n_2}\right) -\sum_{c=1}^2(\sum_{i=1}^k \xi_i p_{ci})^2 
\leq  \sum_{i=1}^k \xi_i^2 \left(\frac{p_{1i}}{n_1} +  \frac{p_{2i}}{n_2}\right)
%\\
%&\leq&  \sum_{i=1}^k \left( \frac{\xi_i^2 p_{1i}}{n_1} + \frac{\xi_i^2 p_{2i}}{n_2} \right)
%- \sum_{i=1}^k \left( \frac{\xi_i^2 p_{1i}^2 }{n_1} + \frac{\xi_i^2 p_{2i}^2}{n_2} \right)
%- \sum_{i\neq j} \left(\frac{ \xi_i \xi_j p_{1i} p_{2j} }{n_1} + \frac{\xi_i \xi_j p_{2i}p_{2j}}{n_2} \right).
\end{eqnarray*}
where the last equality is due to   $ (\sum_{i\neq j} \xi_i \xi_j   p_{ci} p_{cj})  =  (\sum_{i=1}^{k} \xi_i  p_{ci})^2 - \sum_{i=1}^k \xi_i^2 p_{ci}^2$ for $c=1,2$. 
Since $\sum_{i=1}^k \xi_i^2 \left(\frac{p_{1i}}{n_1} +  \frac{p_{2i}}{n_2}\right)  
 \leq  O\left( \frac{1}{n} ||\boldsymbol{\xi} ||_2^2 \right)   (\max_i p_{1i} + \max_i p_{2i}) 
 \leq    \frac{\var{}}{n^2}  \var{} =  o(\sigma_k^2)$
  where the last equation is from  the condition 4 in Theorem \ref{Theorem1}, $ \max_i p_{ci} =o(\var{})$ 
  and $\sigma_k^2 \asymp n^{-2} \vari$.    
%\begin{eqnarray*}
%&=&      \left ( \frac{ \max_{1\leq i \leq k} p_{1i}}{n_1}  + \frac{\max_{1\leq i \leq } p_{2i} }{n_2} \right) ||\boldsymbol{\xi} ||_2^2
%-\frac{1}{n_1} \left(\sum_{i=1}^k \xi_i p_{1i}  \right)^2 - \frac{1}{n_2} \left(\sum_{i=1}^k \xi_i p_{2i} \right)^2 \\
%&\leq&    O\left( \frac{( \max_{1\leq i \leq k} p_{1i} + \max_{1\leq i \leq } p_{2i})}{n^2}   \right)  n ||\boldsymbol{\xi} ||_2^2  \\
%&=&  O\left( \frac{\max_{1\leq i \leq k} p_{1i} + \max_{1\leq i \leq } p_{2i}}{n^2}  \right) O(\sqrt{\pp + a_k})
%\end{eqnarray*}
%where the last equality is due to the condition 3 in Theorem \ref{thm:asymptoticdist}.
%Using 2 in Lemma \ref{Lemma2},   we have $ \frac{1}{\sqrt{\pp + a_k}} =O \left( \frac{1}{\sqrt{\psum}} \right)$, we have
%\begin{eqnarray*}
%&=&   O \left( \frac{\max_{1\leq i \leq k} p_{1i} + \max_{1\leq i \leq } p_{2i}}
%{ \sqrt{\pp+a_k} } \right) O\left( \frac{\pp + a_k}{n^2} \right)~~~~~~~~~~~~\mbox{from condition 1 in Lemma \ref{Lemma2}}     \\
%&=& O  \left(  \frac{ \max_{1\leq i \leq k} p_{1i}}{ || {\bf P}_1||_2}  + \frac{\max_{1\leq i\leq k} p_{2i}}{||{\bf P}_2||_2}  \right) O(\sigma_k^2)~~~~~~~\mbox{  from $\sigma_k^2 \asymp  \frac{1}{n^2} (\pp + a_k)   $} \\
%&=&  o(1) O(\sigma_k^2)~~~~~~~~~~~~~~~~~~~~~~~~~~~~~~~~~~~~~~\mbox{from condition 2 in Theorem \ref{thm:asymptoticdist}} \\
%&=& o(1).
%\end{eqnarray*}
Therefore, using  (\ref{eqn:Fk_normal}) and  $Var( \sum_{i=1}^k {\cal G}_{2i}(N_{1i},N_{2i}))=o(\sigma_k^2)$,  we have  
\begin{eqnarray*}
   \frac{ \sum_{i=1}^k f^*(N_{1i}, N_{2i}) -||\boldsymbol{\xi}||^2  }{\sigma_k} =  
\frac{\sumk f_i(N_{1i}, N_{2i})}{\sigma_k} - \frac{\sumk {\cal G}_{2i}(N_{1i}, N_{2i})}{\sigma_k}    \cd N(0,1)
\end{eqnarray*}
\qed

%\ref{lemma:variance} and \ref{lemmaMorrisCondition},   we prove the asymptotic normality of
%$\frac{\sumk f(X_{1i}, X_{2i}) - ||{\bf P}_1 -{\bf P}_2 ||_2^2}{ \sigma_k}$ which is the result of
%Theorem \ref{thm:asymptoticdist}. In the Section~\ref{MultinomSim} we provide numerical studies for different configurations of ${\bf P}_1$ and ${\bf P}_2$, comparing size and power of different test statistics. However, first we discuss neighborhood tests.

%%%%%%%%%%%%%%%%%%%%%%%%%%%%%%%%%%%%%%
\section{Neighborhood Test}
\label{NeighborhoodTest}
{ In Corollary 2, we presented the closed form asymptotic power of the proposed test.  From the closed form of asymptotic power in Corollary \ref{Theorem1},  we may expect additional applications.
In this section, we present one application based on the closed form of asymptotic power of $T$ in Corollary \ref{Theorem1}.}

In testing the equality of parameters from two populations,
it frequently happens
that  the null hypothesis is rejected even though the estimates of effect sizes are close to
each other, however, these differences are so small that parameters may not be
considered to be different in practice.
Another issue is that although the use of $p$-values is a common measure to draw a conclusion about
the population, one may be interested in the measure of indifference or inhomogeneity
regarding the original effect sizes based on ${\bf P}_1$ and ${\bf P}_2$.
As related work, see Solo (1984), Berger and Delampady (1987), Berger and Sellke (1987),
Dette and Munk (1998), Munk et al. (2008) and Choi and Park (2014).
In particular, Munk et al. (2008) called  this type of testing problem a neighborhood test.
With these motivations, instead of testing the exact equality such as  $H_0 : {\bf P}_1 = {\bf P}_2$,
we consider more flexible null hypothesis,
which allows a predetermined level of difference such as
${\cal N}_{\delta} = \{ ({\bf P}_1,{\bf P}_2) : d({\bf P}_1, {\bf P}_2) \leq \delta \}$.
Here,  $d$ is a function satisfying $d({\bf P}_1, {\bf P}_2)=0$  under ${\bf P}_1 = {\bf P}_2$.
In general, when considering  ${\cal N}_{\delta}$ as a null space for equivalence of ${\bf P}_1$ and ${\bf P}_2$,
there is an important issue in the determination of
the rejection region for a given neighborhood to have a size $\alpha$ test for a given $\alpha$.
That is, for a given test  ${T}$,  we need to find out $C$ satisfying
\begin{eqnarray}
\sup_{({\bf P}_1, {\bf P}_2) \in {\cal N}_{\delta}} P_{ ({\bf P}_1, {\bf P}_2) } ({T} > C ) =\alpha.
\end{eqnarray}
Choi and Park (2014) discussed testing non-equivalence of normal mean values and
found the least favorable parameters for different types of null hypotheses.
Munk et al. (2008) considered a noncentral chi-square distribution in a neighborhood test for functional data analysis.
In our case, we consider a testing problem based on SNR (signal to noise ratio) which influences the effect size in the two sample test as follows:
\begin{eqnarray}
{\cal N}_{\delta} = \left\{ ({\bf P}_1, {\bf P}_2) :  \frac{||{\bf P}_1 - {\bf P}_2||^2_2}{\sigma_k} \leq \delta \right\}.
\label{eqn:neighborhood2}
\end{eqnarray}
Note that $\delta=0$ implies $H_0 : {\bf P}_1 = {\bf P}_2$.
We test
\begin{eqnarray}
H_{0,\delta} : ({\bf P}_1, {\bf P}_2) \in {\cal N}_{\delta}~~vs.~~H_1 : ({\bf P}_1, {\bf P}_2) \notin {\cal N}_{\delta}
\label{eqn:neighborhood}
\end{eqnarray}
When $N_{\delta}$ in (\ref{eqn:neighborhood2}) is given,
the power function of $T$ in Corollary \ref{cor:power} gives
some insight into the rejection region for a given size $\alpha$.
For a given $\alpha$, the goal is to identify $C$ satisfying
\begin{eqnarray}
\lim_{n_1, n_2 \rightarrow \infty} \sup_{({\bf P}_1, {\bf P}_2) \in {\cal N}_{\delta}} P_{({\bf P}_1, {\bf P}_2)}\left( T > C \right) =\alpha
\end{eqnarray}
for $T$.
The supremum occurs when $C=z_{1-\alpha}-\delta$ and  $\frac{||{\bf P}_1 - {\bf P}_2||^2_2}{\sigma_k}=\delta$
from the asymptotic power function of $T$. The asymptotic $p$-value is
\begin{eqnarray*}
p_{\delta} =\bar \Phi \left( \frac{D}{\hat \sigma_k} -\delta \right)
\end{eqnarray*}
where $\bar \Phi (x) = P(Z>z)$ and $Z$ has a standard normal distribution.
Since ${p}_{\delta}$ is a monotone increasing function of $\delta$,
we have $p_{\delta} \rightarrow 1$ as $\delta$ increases.
When the $p$-value from testing $H_0 : {\bf P}_1 = {\bf P}_2$ is almost 0,
we can obtain some $\delta^*(\alpha)$ for a given $\alpha$ satisfying
\begin{eqnarray}
\delta^*(\alpha) =  \min_{\delta>0} \{ \delta :  p_{\delta} \geq \alpha \}.
\end{eqnarray}
In Munk et al. (2008),  $\delta^*(\alpha)$ is called the size of the test for a given $\alpha$ and
can be presented as a measure of indifference of ${\bf P}_1$ and ${\bf P}_2$ instead of a $p$-value from testing $H_0 : {\bf P}_1 = {\bf P}_2$.
Park et al. (2015) and Choi and Park (2014) investigated the behavior of $\delta^*(\alpha)$
for different problems of testing normal means.

We apply this neighborhood test to a real data example in section \ref{newsGroups}.
%%%%%%%%%%%%%%%%%%%%%%%%%%%%%%%%%%%%%%
\section{Simulations}
\label{MultinomSim}
In this section, we provide numerical studies to compare the proposed test ($T$) with existing tests such as
the test  in (\ref{eqn:Zelterman}) and the test (BS-test) in Bai and Saranadasa (1996).
%To test the performance of our statistics and to compare them to the Pearson $\chi^2$ in \ref{eqn:chi},  the normalized version of $\chi^2$
%in \ref{eqn:norm_chi}, and Zelterman test in \ref{eqn:Zelterman}, we ran a series of simulation experiments.
%Note that the Pearson's chi-square test fails in controlling the nominal size under our situations, so we exclude
%the results of Pearson's chi-square test in our numerical studies.

Throughout all following simulations,  we repeat $10^4$ simulations to compute each of empirical sizes or powers. 
We first investigate the sizes of three tests when $k$ is larger than sample sizes.
We consider two types of scenario: $(i)$  $k$  increases  when the ratio of the dimension and sample sizes is 10, i.e., $k/n_c=10$.
In these cases, the sample sizes also increase as $k$ increases.
As the configurations of ${\bf P}_1={\bf P}_2$, we use two cases:
$(i)$  $p_{1i}=p_{2i}= \frac{\frac{1}{i^{\gamma}}}{\sumk  \frac{1}{i^{\gamma}}}$  for $\gamma=0.45$ 
$(ii)$ $p_{1i}=p_{2i}=\frac{1}{k}$.
$(ii)$ $k$ increase when sample sizes are fixed such as $n_1=n_2=10^3$. In these cases,
data are getting more sparse as $k$ increases.

Tables \ref{tab:sizes1} and \ref{tab:sizes2} show
$(i)$ and $(ii)$, respectively. As displayed in Tables \ref{tab:sizes1} and \ref{tab:sizes2}, we see that
the proposed test($T$) and Zelterman's test control the nominal level of size (0.05) reasonably, however
BS-test fails in controlling the nominal level since the BS-test always achieves inflated sizes up to $10\%$.

\begin{table}[ht]
\centering
\begin{tabular}{rrrr|rrrr}
\hline
\multicolumn{4}{c}{$p_{1i}=p_{2i} =  1/i^{0.45} /\sum_{i=1}^k 1/i^{0.45}   $} & \multicolumn{4}{c}{$p_{1i}=p_{2i}=1/k$} \\
  \hline
 $k$ & $T$ & $BS$ & $Zel$  &   $k$ & $T$ & $BS$ & $Zel$ \\
  \hline
$10^3$       & 0.051& 0.100& 0.052 &$10^3$       & 0.054 &0.116& 0.060\\
$10^4$       & 0.065& 0.100& 0.064 &$10^4$       & 0.046 &0.086& 0.051\\
$2\times10^4$& 0.051& 0.076& 0.048 &$2\times10^4$& 0.049 &0.084& 0.053\\
$3\times10^4$& 0.057& 0.085& 0.057 &$3\times10^4$& 0.038 &0.064& 0.040\\
$10^5$ & 0.058& 0.086& 0.046 &$10^5$ & 0.052 &0.089& 0.052\\
   \hline
\end{tabular}
\caption{Empirical sizes of tests when the nominal level is $0.05$ and $k/n_c =10$ for $c=1,2$. }
\label{tab:sizes1}
\end{table}

\begin{table}[ht]
\centering
\begin{tabular}{rrrr|rrrr}
\hline
\multicolumn{4}{c}{$p_{1i}=p_{2i} =  1/i^{0.45} /\sum_{i=1}^k 1/i^{0.45}   $} & \multicolumn{4}{c}{$p_{1i}=p_{2i}=1/k$} \\
  \hline
 $k$ & $T$ & $BS$ & $Zel$  &   $k$ & $T$ & $BS$ & $Zel$ \\
  \hline
$10^3$       & 0.041&  0.079&  0.052&  $10^3$       & 0.049&  0.079&  0.050\\
$10^4$       & 0.052&  0.094&  0.053&  $10^4$       & 0.046&  0.075&  0.046\\
$2\times10^4$& 0.059&  0.103&  0.062&  $2\times10^4$& 0.064&  0.102&  0.059\\
$3\times10^4$& 0.050&  0.093&  0.049&  $3\times10^4$& 0.056&  0.092&  0.057\\
$10^5$ & 0.037&  0.093&  0.038&  $10^5$ & 0.040&  0.071&  0.038\\
   \hline
\end{tabular}
\caption{Empirical sizes of tests when the nominal level is $0.05$ and sample sizes are fixed, $n_1=n_2=10^3$.  }
\label{tab:sizes2}
\end{table}

We now consider powers of three tests.
Our simulation set up is as follows.
\begin{itemize}
%\item \textbf{Experiment 1}: $p_{1i}=\frac{1/i^\alpha}{\sum_{i=1}^k 1/i^\alpha}$, where $\alpha=0.45$. The probability vector for the $2^{nd}$ group was generated by switching the position of two vector entries.
\item \textbf{Experiment 1}: $p_{1i}=\frac{1/i^\gamma}{\sum_{i=1}^k 1/i^\gamma}$ for $\gamma=0.45$. The probability vector for the $2^{nd}$ group was generated by switching the position of  1st and $m$th entries, i.e., $p_{2,1}=p_{1,m}$, $p_{2,m}=p_{1,1}$ and $p_{1i}=p_{2i}$
    for  all $i\neq 1, m$.
\item \textbf{Experiment 2}: $p_{1i}=1/k$ for $1\leq i\leq k$, $p_{2i}=0$  for $i \in [1,b]$, $p_{2,b+1}=\sum_{i=1}^{b+1}p_{1i}=\frac{b+1}{k}$, $p_{2i}=1/k$ for $i \in [b+2,k]$ for different values of $b$.
\item \textbf{Experiment 3}: $p_{1i}=1/k$, $p_{2i}=0$ for $i \in [1,b]$ and $p_{2i}=1/(k-b)$ for $i>b$ for different values of $b$.
\end{itemize}
For each experiment, we consider two configurations of sample sizes and dimensions:
 $(n_1, n_2, k)=(500,500, 10^3)$ and $(2000,2000, 10^4)$.

\begin{table}[ht]
\centering
\begin{tabular}{rrrr|rrrr}
\hline
 \multicolumn{4}{c}{$n_1=n_2 =500, k=10^3$} & \multicolumn{4}{c}{$n_1=n_2 =2000, k=10^4$} \\ \hline
   $m$ & $T$ & $BS$ & $Zel$          &  $m$   & $T$ & $BS$ & $Zel$ \\
  \hline
    1($H_0$) &0.042& 0.067& 0.041 &   1 ($H_0$)& 0.056& 0.087& 0.056\\
    2 &0.068& 0.105& 0.058     &   10    & 0.133& 0.183& 0.070\\
   10 &0.161& 0.202& 0.075     &   $10^2$& 0.282& 0.344& 0.102\\
  100 &0.292& 0.364& 0.135     &   $10^3$& 0.327& 0.389& 0.103\\
 1000 &0.363& 0.444& 0.153     &   $10^4$& 0.347& 0.410& 0.114\\
 \hline
\end{tabular}
\label{tab:Exp1}
\caption{Experiment 1. $m=1$ implies $H_0$.  }
\end{table}

\begin{table}[ht]
\centering
\begin{tabular}{rrrr|rrrr}
\hline
 \multicolumn{4}{c}{$n_1=n_2 =500, k=10^3$} & \multicolumn{4}{c}{$n_1=n_2 =2,000, k=10^4$} \\
  \hline
 $b$ & $T$ & $BS$ & $Zel$ & $b$ & $T$ & $BS$ & $Zel$ \\
  \hline                     \hline
    0& 0.033& 0.076& 0.041&    0& 0.061& 0.093& 0.061\\
   10& 0.183& 0.248& 0.103&   20& 0.092& 0.141& 0.064\\
   20& 0.588& 0.652& 0.146&   50& 0.493& 0.565& 0.108\\
   25& 0.779& 0.829& 0.193&   70& 0.811& 0.851& 0.153\\
   30& 0.915& 0.937& 0.255&  100& 0.978& 0.985& 0.210\\
   \hline
\end{tabular}
\label{tab:Exp2}
\caption{Experiment 2.  $b=0$ implies  $H_0$.  }
\end{table}

\begin{table}[ht]
\centering
\begin{tabular}{rrrr|rrrr}
\hline
 \multicolumn{4}{c}{$n_1=n_2 =500, k=10^3$} & \multicolumn{4}{c}{$n_1=n_2 =2,000, k=10^4$} \\
  \hline
 $b$ & $T$ & $BS$ & $Zel$  &   $b$ & $T$ & $BS$ & $Zel$ \\
  \hline
    0& 0.033& 0.076& 0.041 &      0& 0.061& 0.093& 0.061\\
  100& 0.142& 0.212& 0.152 &   1000& 0.171& 0.259& 0.186\\
  200& 0.363& 0.469& 0.429 &   2000& 0.483& 0.587& 0.531\\
  300& 0.700& 0.791& 0.784 &   3000& 0.890& 0.935& 0.912\\
  400& 0.962& 0.979& 0.981 &   4000& 0.998& 0.999& 0.998\\
   \hline
\end{tabular}
\label{tab:Exp3}
\caption{Experiment 3.  $b=0$ implies $H_0$.  }
\end{table}

Note that for the null hypothesis, we use $p_{1i}$s described in Experiments 1-3. Additionally,
we use $p_{2i}$s in Experiments 1-3 for the alternative.

Experiment 1 shows that
the probabilities, $p_{1i}$s, are decreasing in $i$ which is the case that
some cells have large counts and others have sparse counts.
For the situation of $H_1$,
only two entries (1st and $m$th in ${\bf P}_2$) are changed to have different probability vector from ${\bf P}_1$.
As  $m$ increases, the inhomogeneity of two groups also increases, which leads to larger powers of tests.
%For the alternative, we take $p_{2i}s$ which are all different from $p_{1i}$s in group 1.
%The motivation for the definition of $p_{1i}$ in Experiments is to mimic sparse data where few values are large and many values are close to 0.
On the other hand, in Experiments 2 and 3, $p_{1i}$s all have equal probability $1/k$. 
For the $H_1$,  Experiments 2 and 3 use different configurations of ${\bf P}_2$. For example, 
 $p_{2,b+1}$ in Experiment 2 has very spiky values as $b$ increases while 
 Experiment 3   $p_{2i}$s have all the same values for $i>b$.   

Tables \ref{tab:Exp1} and \ref{tab:Exp2} provide the results of Experiment 1 and 2 showing  
that  $T$ have significant advantage over Zelterman's test in power while
the BS test tends to have larger sizes than the nominal level $.05$ as also shown in Tables \ref{tab:sizes1} and \ref{tab:sizes2}.
For Experiment 3,  Table \ref{tab:Exp3} shows that
 Zelterman's test seems to have slightly higher powers than the proposed test.
 The BS test has the highest powers among three tests, however
 the BS test has inflated sizes which lead to higher powers.

We additionally consider the following simulations for powers.
Experiment 4 and 5 use the cases that
sample sizes ($n_c$ for $c=1,2$) are four times the dimension ($4 \times k$) and
$k$ increases from $10^3$ to $10^5$. Note that sample sizes also increase at the linear rate of $k$.
\begin{itemize}
   \item \textbf{Experiment 4:} $p_{1i}=1/k$, $p_{2i}=0$ for $i \in [1,b]$ and $p_{2i}=1/(k-b)$ for $i>b$.
    Here we used $b=50$ and $n_c = 4k$ for $c=1,2$.
\item \textbf{Experiment 5 :}  $p_{1i}=\frac{1/i^\gamma}{\sum_{i=1}^k 1/i^\gamma}$, where $\gamma=0.45$.
 $n_c = 4k$ for $c=1,2$.
The probability vector for the $2^{nd}$ group was generated by copying the probability vector of the $1^{st}$ group and then switching the 1st and 5th entries of that vector.  $n_c = 4k$ for $c=1,2$.  \end{itemize}

Table  6 shows the powers of three tests for Experiment 4 and 5.
In Experiment 4, all three tests decrease 
as $k$ increases.
We can see that the Zelterman's test has the highest powers in Experiment 4. 
The BS test has the slightly higher powers than the proposed test, however this is due to the tendency that 
the BS test has inflate sizes. 
On the other hand, in Experiment 5, the proposed test and the BS test 
tend to have increasing powers as $k$ increases while the Zelterman's test
has decreasing pattern of powers. The BS test still has slightly more powers than the proposed test, but this is also due to inflated sizes of
the BS test.

\begin{table}[ht]
\centering
\begin{tabular}{rrrr|rrrr}
\hline
 \multicolumn{4}{c}{\textbf{Experiment 4}  } & \multicolumn{4}{c}{ \textbf{Experiment 5} } \\
  \hline
 $k$ & $T$ & $BS$ & $Zel$  &   $k$ & $T$ & $BS$ & $Zel$ \\
  \hline
       100& 1.000& 1.000& 1.000&       100& 0.377& 0.464& 0.145 \\
      1000& 0.736& 0.810& 0.983&      1000& 0.761& 0.812& 0.120 \\
      2000& 0.481& 0.571& 0.834&      2000& 0.876& 0.908& 0.112 \\
      3000& 0.364& 0.454& 0.681&      3000& 0.938& 0.956& 0.112 \\
     10000& 0.176& 0.240& 0.317&     10000& 0.996& 0.997& 0.108 \\
   \hline
\end{tabular}
\label{tab:Power1}
\caption{Powers from Experiment 4 and 5.  }
\end{table}

Lastly, we consider two more experiments, Experiment 6 and 7.
The dimension $k$ is more than the sample sizes such as $k=4 n_c$ and $n_1=n_2$.

\begin{itemize}
   \item \textbf{Experiment 6:} $p_{1i}=1/k$, $p_{2i}=0$ for $i \in [1,b]$ and $p_{2i}=1/(k-b)$ for $i>b$. Here we used $b=500$ and $k=4n_c$ for $c=1,2$.
\item \textbf{Experiment 7:}  $p_{1i}=\frac{1/i^\gamma}{\sum_{i=1}^k 1/i^\gamma}$, where $\gamma=0.45$.
 $k=4n_c$ for $c=1,2$.
The probability vector for the $2^{nd}$ group was generated by copying the probability vector of the $1^{st}$ group and then switching the 1st and 500th entries of that vector.
\end{itemize}

Table 7
shows the results of Experiment 6 and 7. We see similar results to Experiment 4 and 5.
In particular, Experiments 5 and 7, the Zelterman's test 
has drawback in obtaining powers while the proposed test and the BS test
have increasing power as $k$ increases.

\begin{table}[ht]
\centering
\begin{tabular}{rrrr|rrrr}
\hline
 \multicolumn{4}{c}{\textbf{Experiment 6}  } & \multicolumn{4}{c}{ \textbf{Experiment 7} } \\
  \hline
 $k$ & $T$ & $BS$ & $Zel$  &   $k$ & $T$ & $BS$ & $Zel$ \\
  \hline
      1000& 0.787& 0.868& 0.827&      1000& 0.175& 0.247& 0.111 \\
      2000& 0.346& 0.451& 0.371&      2000& 0.210& 0.279& 0.110 \\
      3000& 0.240& 0.328& 0.260&      3000& 0.249& 0.326& 0.110 \\
     10000& 0.109& 0.164& 0.116&     10000& 0.389& 0.472& 0.109 \\
   \hline
\end{tabular}
\label{tab:Power2}
\caption{Powers from Experiment 6 and 7.  }
\end{table}

In Experiment 5 and 7, the increasing pattern of powers of the proposed test can be explained through our result in Corollary \ref{cor:power}.
For given probabilities in Experiment 5 and 7,  $n$ and $k$ have linear relationships and  ${\bf P}_2$ is obtained by switching 
 two components in ${\bf P}_1$, so  we obtain the following result;
for given $m$ such that  $p_{21}=p_{1m}$, $p_{2m}=p_{1m}$ and $p_{1i}=p_{2i}$ for $i \neq 1, m$, then we have  
\begin{eqnarray*}
||{\bf P}_1 - {\bf P}_2||_2^2  &=& (p_{11} -p_{21})^2 + (p_{1m}-p_{2m})^2 \asymp  k^{2-2\gamma}\\
\sigma_k^2 &\asymp&   n^{-2} ( \frac{k^{1-2\gamma}}{k^{2-2\gamma}} +  n^{-1}) \asymp k^{-3}
\end{eqnarray*}
which leads to
\begin{eqnarray*}
\frac{||{\bf P}_1 - {\bf P}_2||_2^2}{\sigma_k}  \asymp  \frac{k^{-2+2\gamma}}{k^{-3/2}} = k^{-\frac{1}{2} + 2\gamma}.
\end{eqnarray*}
For $1/4 < \gamma< 1/2$,  we have $ \frac{||{\bf P}_1 - {\bf P}_2||_2^2 }{\sigma_k} \asymp  k^{-\frac{1}{2} +2\gamma} \rightarrow \infty$
which results in the convergence of power of $T$ and $T'$  to 1.  Since $\gamma=0.45>1/4$,
the powers of $T$ and $T'$ are increasing to 1 as $k$ increases.
If $ \gamma = 1/4$, then we have $  0<  \lim_k P(T >  z_{1-\alpha}) \leq  \limsup_k P(T>z_{1-\alpha})  <1$;
if $ 0\leq  \gamma < \frac{1}{4}$, we have  $P(T > z_{1-\alpha}) \searrow \alpha$, decreasing to the nominal Type I error $\alpha$
from Corollary \ref{cor:power}.
On the other hand, there is no study on the asymptotic power function in Zelterman (1987), so it is not easy to
investigate the behavior of power of the Zetlerman's test analytically.
Our simulation studies in Experiment 5 and 7 show that
the Zetlerman's test has decreasing pattern of powers as $k$ increases while the proposed test and the BS test
have increasing patterns of powers.

To summarize, the proposed test and Zelterman's test 
control a given level of size while 
the BS test tends to have inflated sizes which is the critical drawback of the BS test.  
The BS test has the highest powers all situations, however such high powers are not reliable due to inflated sizes.   
The Zelterman's test have slightly more advantage over the proposed test in powers  
in some cases (Experiment 3); otherwise
our proposed test has significantly more powers than the Zelterman's test from our simulation studies.
Overall, the proposed test is reliable in controlling the nominal level of size and
obtaining reasonable powers while Zelterman's test and the BS test
has drawback in either controlling the nominal level of size or obtaining powers.

%\begin{figure}
%\begin{tabular}{cc}
%      \includegraphics[width=75mm]{MultinomialFigs/szCompareDim_fctr4Inverse.png} &   \includegraphics[width=75mm]{MultinomialFigs/pwrCompareDim_fctr4Inverse.png} \\
%(a) Size: Experiment 9 & (b) Power: Experiment 9 \\[6pt]
%  \includegraphics[width=75mm]{MultinomialFigs/szCompareDim_fctr4Uniform.png} &   \includegraphics[width=75mm]{MultinomialFigs/pwrCompareDim_fctr4Uniform.png} \\
%(c) Size: Experiment 10 & (d) Power: Experiment 10 \\[6pt]
%\end{tabular}
%\caption{Experiments showing the effect of varying dimension. The ratio between sample size and dimension is held constant at $n_c=4k$.}
%\label{PlotExp9_10_VaryDim}
%\end{figure}

%%%%%%%%%%%%%%%%%%%%%%%%%%%%%%%%%%%%%%
\section{Real Example: 20 Newsgroups}
\label{newsGroups}

Next we'll illustrate the use of the proposed neighborhood test using our statistic $T$ and the popular 20 newsgroups dataset. This dataset, originally assembled by Ken Lang, consists of 20,000 documents each of which comes from one of 20 different newsgroups.
 % listed in Table~\ref{T:newsgroups}.
 We used the training set available at \verb+http://qwone.com/~jason/20Newsgroups/+.

We compared the group rec.sports.baseball with sci.med to test the null hypothesis that the 2 groups of documents come from the same newsgroup. The $i^{th}$ entry of the data vector contains the count of the $i^{th}$ dictionary word seen in the set of documents, where the dictionary is composed of all unique words seen in both sets of documents. We compose such a vector for each of the two groups.
{For testing  $H_0 : {\bf P}_1 = {\bf P}_2$,  we observe that $p$-values from all tests described in this paper are almost 0, therefore
two groups are obviously different.
In such a case, we consider neighborhood test based on $T$ as discussed in section \ref{NeighborhoodTest} since
$T$ has the closed form of power function in Corollary \ref{Theorem1}.}
In the provided data set, each group consisted of 594 documents. For each of 100 replications we sampled documents to calculate the power and size. To obtain power, we sampled 50 documents from each group  (and subsequently 100 and 200 documents as additional experiments).
For size of test, we sampled two groups of 50 documents from the same group (and subsequently 100 and 200 documents as additional experiments). The dimension, 16,214, was defined by the the set of unique words found in the two groups being compared. The results are shown in figure \ref{neighborhood200} where we show $\delta$ vs $p_{\delta}$ for both power and size. The three plot show three different sample sizes (50, 100, and 200 sampled documents per group). Notice that the null and alternative hypotheses become more separable as the number of documents increases.

%
%
%\begin{table} \centering %\small
%  \begin{center}
%%   \scalebox{0.78}{
%  \begin{tabular}{cl} \hline
%\textbf{Group Number} & \textbf{Group Name}     \\ \hline
%1        &       alt.atheism \\
%2         &    comp.graphics \\
%3   & comp.os.ms-windows.misc \\
%4  & comp.sys.ibm.pc.hardware \\
%5   &  comp.sys.mac.hardware \\
%6     &       comp.windows.x \\
%7      &        misc.forsale \\
%8       &          rec.autos \\
%9      &     rec.motorcycles \\
%10    &   rec.sport.baseball \\
%11    &     rec.sport.hockey \\
%12    &            sci.crypt \\
%13    &      sci.electronics \\
%14    &              sci.med \\
%15    &            sci.space \\
%16   & soc.religion.christian \\
%17    &   talk.politics.guns \\
%18   & talk.politics.mideast \\
%19    &   talk.politics.misc \\
%20    &   talk.religion.misc \\ \hline
%  \end{tabular}%}
%  \caption{Newsgroups}
%  \label{T:newsgroups}
%  \end{center}
%\end{table}
%
%

\begin{figure}
  \begin{center}
    	\includegraphics[width=0.4\textwidth]{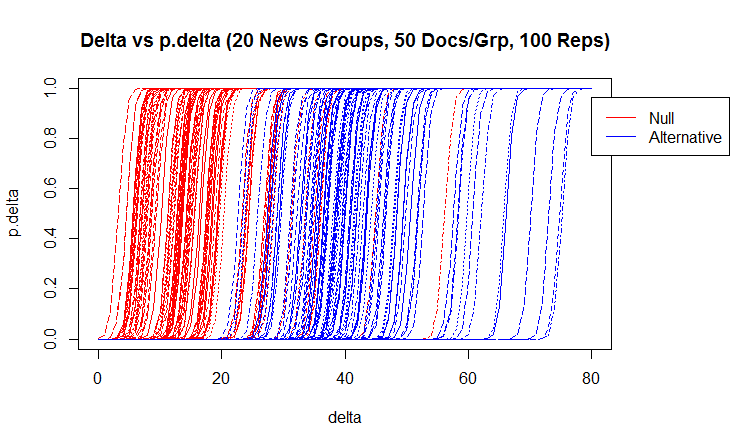}
%	\caption{P-value curve for various values of $\delta$. 50 documents chosen per group.}
%	\label{neighborhood50}
%  \end{center}
%\end{figure}
%
%\begin{figure}
%  \begin{center}
    	\includegraphics[width=0.4\textwidth]{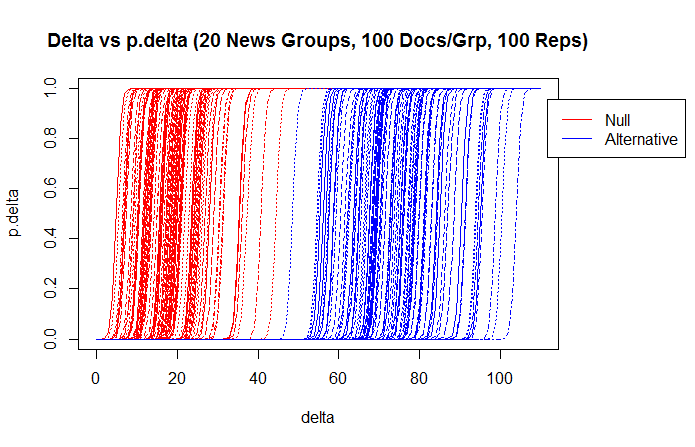}
%	\caption{P-value curve for various values of $\delta$. 100 documents chosen per group.}
%	\label{neighborhood100}
%  \end{center}
%\end{figure}
%
%\begin{figure}
%  \begin{center}
    	\includegraphics[width=0.4\textwidth]{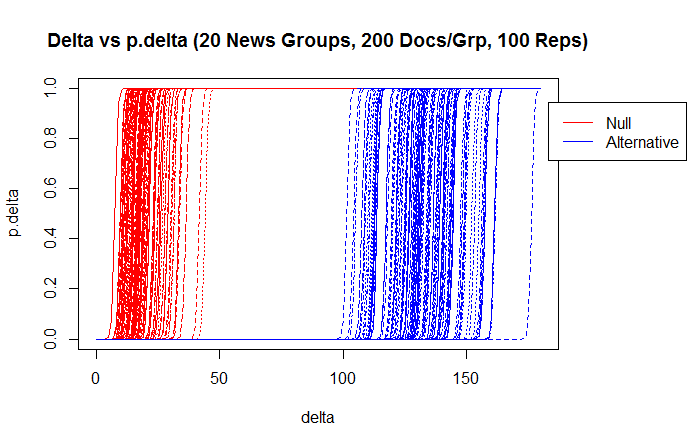}
\caption{$P$-value curve for various values of $\delta$.
50, 100 and   200 documents chosen per group.}
	\label{neighborhood200}
  \end{center}
\end{figure}

%%%%%%%%%%%%%%%%%%%%%%%%%%%%%%%%%%%%%%
\section{Concluding Remaks}
\label{MultinomialConclusion}

In this paper we developed new statistics for testing the homogeneity of two probability vectors from two multinomial distributions and showed the asymptotic normality of the proposed tests under some regularity conditions. Through simulations we showed that our proposed test statistic  performs very well (i.e. have high power while controlling size) especially for situations where the data is sparse. In some cases the power of our new statistic was 3-4 times that of some existing test.
In Experiment 5 and 7 of the simulation studies we even saw that the power of our proposed test increased as dimension increased, while the power of the other method remained low. Additionally,
using the power function of our proposed test,
we discussed the use of a neighborhood test with our statistic as a means to make the test less sensitive to insignificant differences between the two groups.
We applied this neighborhood test to the popular 20 newsgroups data set to show that our test is effective in testing the null hypothesis that the groups of documents are from the same newsgroup.

\appendix 
\section*{Appendix}
\section{Proof of Lemma \ref{lemma:ratioconsistency}}
\label{ProofLemmaRatioConsistents}
We show the ratio consistency of $\hat \sigma_k^2$.
To show the ratio consistency of $\hat \sigma_k^2$, by using   $n_1 \asymp n_2$ and $\sigma_k^2 \asymp n^{-2} \vari$, it is sufficient to show
\begin{eqnarray}
\frac{\hat \sigma_k^2 -\sigma_k^2}{\sigma_k^2}  
&\asymp &  \frac{ \sumk (\hat p^2_{1i} -\frac{\hat p_{1i}}{n_1} -p_{1i}^2)}{\vari} \nonumber
+  \frac{\sumk \hat p_{1i}\hat p_{2i}  -\sumk p_{1i}p_{2i} }{\vari} + \frac{ \sumk (\hat p^2_{2i} -\frac{\hat p_{2i}}{n_2} -p_{2i}^2)}{\vari} 
\nonumber \\
&&\cp 0.
\label{eqn:convergence}
\end{eqnarray}
%In \eqref{eqn:convergence}, we will show each of three terms converges to 0 in probability.
%Now we abuse the following notations. When $X =(X_1,\ldots, X_k)  \sim  Multinomial (n, (p_1,\ldots, p_k))$,
%we will show that $\sum_{i=1}^k \left(\hat p_{i}^2 -\frac{\hat p_i}{n} \right)  = \left( \sum_{i=1}^k p_{i}^2 \right) (1+o_p(1))$.
We first show the ratio consistency of $\sumk \left(\hat p_{1i}^2 -\frac{\hat p_{1i}}{n_1}\right)$ for $\sumk p_{1i}^2$. 
The case of the 2nd group ($\sumk \left(\hat p_{1i}^2 -\frac{\hat p_{1i}}{n_1}\right)$ for $\sumk p_{1i}^2$)  can be proved similarly. 
Since  $E(\hat p_{1i} ^2) = p_{1i}^2 + \frac{p_{1i}(1-p_{1i})}{n_1} = (1-\frac{1}{n_1})p_{1i}^2 + \frac{p_{1i}}{n_1}$
where $\hat p_{1i} = \frac{N_{1i}}{n_1}$, we have
$E \left( \frac{n_1}{n_1-1}(\hat p_{1i}^2 - \frac{\hat p_{1i}}{n_1}) \right) = p_{1i}^2$.
Thus we consider the following unbiased estimator of $\sum_{i=1}^k p_{1i}^2$:
$\frac{n_1}{n_1-1}\sum_{i=1}^k(\hat p_{1i}^2 - \frac{\hat p_{1i}}{n_1})$.
To show $\frac{\frac{n_1}{n_1-1}\sum_{i=1}^k(\hat p_{1i}^2 - \frac{\hat p_{1i}}{n_1}) - \sumk p^2_{1i}  }{\vari } \cp 0$,
we will show that the following quantity converges to 0 as follows:
\begin{eqnarray}
&&\frac{E \left[ \left( (\frac{n_1}{n_1-1})\sum_{i=1}^k (\hat p_{1i}^2 - \frac{\hat p_{1i}}{n_1}) -\sum_{i=1}^k p_{1i}^2 \right)^2 \right] }{\var{4}} = 
\frac{Var\left( (\frac{n_1}{n_1-1})\sum_{i=1}^k (\hat p_{1i}^2 - \frac{\hat p_{1i}}{n_1}) \right)}{\var{4}}  \nonumber \\
&\leq& \frac{8}{\var{4}} \left(Var(\sum_{i=1}^k \hat p_{1i}^2) + Var(\sum_{i=1}^k \frac{\hat p_{1i}}{n})\right) 
= \frac{8}{\var{4}} ((I) + (II))   \label{eqn:inequality}
\end{eqnarray}
where the last inequality in (\ref{eqn:inequality}) is from $Var(X+Y) \leq 2(Var(X)+Var(Y))$ and $n_1/(n_1-1) \leq 2$.
We decompose $(I)$ into two parts: 
\begin{eqnarray*}
(I)=Var(\sum_{i=1}^k \hat p_{1i}^2) &=& \sum _{i=1}^k Var(\hat p_{1i}^2) + \sum_{i \neq j} Cov(\hat p_{1i}^2, \hat p_{1j}^2) = (A) + (B).
\end{eqnarray*}
Using 
%the higher order moments of the multinomial distribution given in \cite{Newcomer} and shown for convenience in
the results in Lemma.S2 in Supplementary material,  for some constants $C_1$ and $C_2$, we have 
\begin{eqnarray*}
(A)&=& \sum_{i=1}^k \left( E(\hat p_{1i}^4) - (E(\hat p_{1i}^2))^2 \right) \leq  C_1 \sum_{i=1}^k  \left( \frac{p_{1i}^4}{n_1} + \frac{p_{1i}^3}{n_1} + \frac{p_{1i}^2}{n_1^2} +\frac{p_{1i}}{n_1^3} \right)  \\
|(B)|&=& \left|\sum_{i \neq j} \left( E(\hat p_{1i}^2 \hat p_{1j}^2) -E(\hat p_{1i}^2) E(\hat p_{1j}^2) \right) \right|  \leq  C_2 \sum_{i\neq j}  \left( \frac{p_{1i}^2 p_{1j}^2}{n_1} +
 \frac{p_{1i}^2 p_{1j}}{n_1^2} + \frac{p_{1i}p_{1j}^2}{n_1^2} +\frac{p_{1i}p_{1j}}{n_1^3} \right).
\end{eqnarray*}
For all the terms in the above, we can show  $\frac{(A)}{\var{4}} \rightarrow 0$ and $\frac{(B)}{\var{4}} \rightarrow 0$ as follows:
first, note that   $ \frac{\max_i p_{ci}^2}{\vari}  \rightarrow 0$ since $ \frac{\max_i p_{ci}^2}{\vari}   \leq \frac{\max_i p_{ci}}{||{\bf P}_c ||_2^2} \rightarrow 0$ from the condition 2 in Theorem \ref{Theorem1}. 
For $(A)$, using $\max_i p_{1i}^2 \leq \max_i p_{1i} \rightarrow 0$ in the result 2 in Lemma.S2 in the Supplementary material, 
we have 
\begin{eqnarray*}
\frac{\sum_{i=1}^k p_{1i}^4}{n \var{4}} &\leq&  \frac{\max_i p_{1i}^2 \sum_{i=1}^k p_{1i}^2}{ n_1 \var{4} } = \frac{\max_i p_{1i}^2}{n_1 \vari} \rightarrow 0,\\
\frac{\sum_{i=1}^k p_{1i}^3}{n \var{4}} &\leq&  \frac{\max_i p_{1i} \sum_{i=1}^k p_{1i}^2}{ n_1 \var{4} } = \frac{\max_i p_{1i}}{n_1} \rightarrow 0,\\
\frac{\sum_{i=1}^k p_{1i}^2}{n_1^2 \var{4}} &\leq&  \frac{1}{ n_1  (n_1 \vari) } \rightarrow 0,~~\frac{\sum_{i=1}^k p_{1i}}{n_1^3 \var{4} } \leq  \frac{1}{ n_1  (n_1\vari)^2 } \rightarrow 0
\end{eqnarray*}
where the condition 3 ($ n\vari \geq \epsilon >0 $) and $n_1 \asymp n_2 $ are used  in the last steps  as $n_1 \rightarrow \infty$.     
For $(B)$, using $\sum_{i\neq j} p_{1i}^2 p_{1j}^2 \leq \vari$
and $\sum_{i\neq j} p_{1i} p_{1j} \leq \sum_{i=1}^k p_{1i} =1$, we have from the conditions 1-3 in Theorem \ref{Theorem1}
\begin{eqnarray*}
\frac{\sum_{i \neq j} p_{1i}^2 p_{1j}^2 }{n_1 \var{4}} &\leq& \frac{1}{n_1} \rightarrow 0,~~~~~~\frac{\sum_{i \neq j} p_{1i}^2 p_{1j} }{n_1^2 \var{4}} \rightarrow 0, \\
\frac{\sum_{i \neq j} p_{1i} p_{1j}^2 }{n_1^2 \var{4}} &\leq&  \frac{\vari }{n_1^2 \var{4} } \le
\frac{1}{n_1 \vari }   \rightarrow 0, \\
\frac{\sum_{i \neq j} p_{1i} p_{1j} }{n_1^3 \var{4}} &\leq& \frac{1}{n_1 (n_1\vari)^2} \rightarrow 0. \label{here}
\end{eqnarray*}
Similarly, for (II),  we have
\begin{eqnarray*}
(II) =
Var(\sum_{i=1}^k \frac{\hat p_{1i}}{n_1}) &=&  \sum_{i=1}^k Var(\frac{\hat p_{1i}}{n_1}) + \frac{1}{n_1^2} \sum_{i \neq j} Cov(\hat p_{1i}, \hat p_{1j}) \\
    &=& \sum_{i=1}^k \frac{p_{1i}(1-p_{1i})}{n_1^3} - \sum_{i\neq j} \frac{p_{1i}p_{1j}}{n_1^3} \leq \sumk \frac{p_{1i}}{n_1^3} =\frac{1}{n_1^3}.
\end{eqnarray*}
Therefore, we have $\frac{(II)}{n_1^3\var{4}}  \leq \frac{1}{n_1^3\var{4}}  \rightarrow 0$ which leads
\begin{eqnarray}
 \frac{ \sumk (\hat p^2_{1i} -\frac{\hat p_{1i}}{n_1})  -\sumk p_{1i}^2}{\vari} \cp 0.
 \label{eqn:square}
\end{eqnarray}
The ratio consistent estimator of $\sumk \left(\hat p_{2i}^2 - \frac{\hat p_{2i}}{n_2}\right)$ can be also proved in the same way.    

For  $\sumk \hat p_{1i}\hat p_{2i}$, we show
\begin{eqnarray*}
\frac{E((\sumk \hat p_{1i}\hat p_{2i})^2)}{\var{4}}
&=& \frac{\sum_{i=1}^k Var(N_{1i}N_{2i}) + \sum_{i\neq j}\rcov(N_{1i}N_{2i},N_{1j}N_{2j})}{\var{4}} \\
&\asymp&  \frac{\sum_{i=1}^k p_{1i}^2 p_{2i}}{n \var{4} }+  \frac{\sum_{i=1}^k p_{1i} p_{2i}^2}{n\var{4}} + \frac{1}{n^2 \vari}  \\
&&- \frac{\sum_{i\neq j}p_{1i}p_{2i}p_{1j}p_{2j}}{n\var{4}} \\
&\asymp &  \frac{\max_i p_{1i} }{n\vari} +\frac{\max_i p_{2i} }{n\vari} + \frac{\sum_{i=1}^k p_{1i} p_{2i}^2}{n\var{4}}\\
&& + \frac{1}{n^2 \vari}
- \frac{ ({\bf P}_1\cdot {\bf P}_2)^2 }{n\var{4}} \rightarrow 0
\end{eqnarray*}
where the last term converges to 0 since $\frac{ ({\bf P}_1\cdot {\bf P}_2)^2 }{n\var{4}} \leq \frac{\vari}{2n\var{4}}  = \frac{1}{2n\vari} \rightarrow 0$ from the condition 3 in Theorem \ref{Theorem1}. Therefore
\begin{eqnarray}
\frac{\sumk \hat p_{1i}\hat p_{2i} - \sumk p_{1i}p_{2i} }{ \vari} \cp 0
\label{eqn:product}
\end{eqnarray}

Combining \eqref{eqn:square} and \eqref{eqn:product}, we have  \eqref{eqn:convergence} which leads to
the ratio consistency of $\hat \sigma_k^2$.

\newpage
{\Large \bf Supplementary Material}
\section{Supplementary Lemmas}
{\bf Lemma.S1} \\
If conditions 1-4 in Theorem 1 are satisfied, we have the following results.
\begin{enumerate}
\item
$\max_{1 \le i \le k} p_{ci} \rightarrow 0$ for $c=1,2$.
\item
$\frac{\maxp }{\vari} \rightarrow 0.$
\item   $ n ||\boldsymbol{\xi} * ({{\bf P}_1} + {{\bf P}_2}) ||_2^2  =O(\var{4}) $.
\item  $ |{\boldsymbol \xi}| \cdot({\bf P}_1 + {\bf P}_2) =O(\vari)$.
\item  $n(|\boldsymbol \xi|\cdot {\bf P}_1) (|\boldsymbol \xi|\cdot {\bf P}_2)= O(\var{4})$.
\item $n ( |\boldsymbol \xi|\cdot({\bf P}_1  + {\bf P}_2))^2 =O(\var{4})$.
\item $\sqrt{n}|| \boldsymbol{\xi}*(\sqrt{{\bf P}_1} + \sqrt{{\bf P}_1}) ||_2^2 = \frac{1}{\sqrt{n}}O(\var{2})$.
\end{enumerate}
\begin{proof}
\begin{enumerate}
\item   Result 1 can be shown by contradiction.
Assume  $\frac{\max_i p_{ci}^2}{||{\bf P}_c||_2^2} \rightarrow 0$  holds but $\max_{1 \le i \le k} p_{ci} \nrightarrow 0$ as $k \rightarrow \infty$.
Then, there exist a subsequence $\{k_1', k_2',\ldots, \} \subset \{1,2,\ldots, \}$ such that
$\max_{1 \le i \le k_n'} p_{ci} > \epsilon$ for some $\epsilon>0$.
Since $ ||{\bf P}_c||_2^2= \sum_{i=1}^k p_{ci}^2 \leq 1$ from $p_{ci}^2 \leq p_{ci}$,
 we have
$  \frac{\max_{1 \le i \le k_n'} p_{ci}^2}{ ||{\bf P}_c||_2^2 }
 \geq \epsilon^2 $
 for the sequence $k_n' \rightarrow \infty$.
This is a contradiction to  $\frac{\max_i p_{ci}^2}{||{\bf P}_c||_2^2} \rightarrow 0$.
Therefore, we have $\max_{1\leq i\leq k} p_{ci} \rightarrow 0$.

\item  From $ ||{\bf P}_1 ||_2^2 + ||{\bf P}_2 ||_2^2 \geq 2 ||{\bf P}_1 ||_2||{\bf P}_2 ||_2$, we have
$\frac{\maxp}{\vari}
\leq  \frac{1}{2} \frac{\max_i p_{1i} \max_{i} p_{2i}}{||{\bf P}_1 ||_2||{\bf P}_2 ||_2}
\leq \frac{C}{2} \sqrt{\frac{\max_{1 \le i \le k} p_{1i}^2}{||{\bf P}_1||_2^2}}\sqrt{\frac{\max_{1 \le i \le k} p_{2i}^2}{||{\bf P}_2||_2^2} } \rightarrow 0$
from 1 in this Lemma.
\item
%Using $ n\max_i \xi_i^2 \leq n ||\boldsymbol{\xi}||_2^2 =O(\sqrt{\pp + a_k})$
%and $||{\bf P}_1 + {\bf P}_2 ||_2^2   \leq 2  ||{\bf P}_1 + {\bf P}_2||_2^2 = O(\pp + a_k)$ from result 2 in this Lemma,
%we have
$n || \boldsymbol{\xi} * ({\bf P}_1 + {\bf P}_2) ||_2^2   \leq   n ||\boldsymbol{\xi} ||_2^2  \vari
= O(\var{4})$
where the last equality is from the condition 4 in Theorem 1.
\item
$|\boldsymbol{\xi}|\cdot({\bf P}_1 + {\bf P}_2) = \sumk |p_{1i}-p_{2i}|(p_{1i}+p_{2i}) \leq
\sumk (p_{1i}+p_{2i})^2 = \vari$.
\item By Cauchy-Schwartz inequality, we have
 $n(|\boldsymbol \xi|\cdot {\bf P}_1) (|\boldsymbol \xi|\cdot {\bf P}_2)
\leq  n || {\boldsymbol \xi} ||_2^2  ||{\bf P}_1 ||_2||{\bf P}_2 ||_2 =  O(\var{4})$.
\item Using Cauchy-Schwartz inequality,
$ |\boldsymbol \xi|\cdot ({\bf P}_1 + {\bf P}_2) \leq  || {\boldsymbol \xi} ||_2^2  \vari
= O(\var{4})=O(\vari)$.
\item
$\sqrt{n} ||\boldsymbol{\xi}*(\sqrt{{\bf P}_1}+\sqrt{{\bf P}_2})||_2^2 \leq \sqrt{n}
=||\boldsymbol{\xi} ||_2^2 (\sumk(p_{1i}+p_{2i}))^2 = \frac{1}{\sqrt{n}}O(\vari)$
from the condition 4 in Theorem 1.
\end{enumerate}
\end{proof}

%%%%%%%%%%%%%%%%%%%%%%%%%%%%%%%%%%%%
\label{HigherOrderMoments}
 The following higher order moments of the multinomial distribution are given by  Newcomer et al. (2008).
{\bf Lemma.S2}
Let $(N_1,N_2,...,N_{k})$ be a $k$-dimensional multinomial random variable with parameters ${\bm p}=(p_1,p_2,...,p_{k})$ and sample size $n$. Also let
$x^{(a)}=x(x-1)...(x-a+1)$. Then we have the following moments:

\begin{enumerate}
\item $E(N_i) = np_i$
\item $E(N_iN_j) = n^{(2)}p_ip_j , \forall i \ne j$.
\item $E(N_i^2) = n^{(2)}p_i^2 + np_i$.
\item $E(N_i^2N_j) =  n^{(3)}p_i^2p_j + n^{(2)}p_ip_j, \forall i \ne j$.
\item $E(N_i^3) = n^{(3)}p_i^3 + 3n^{(2)}p_i^2 + np_i = O(n^3 p_i^3 + np_i)$.
\item $E(N_i^2N_j^2) = n^{(4)} p_i^2p_j^2 + 3n^{(3)}(p_i^2p_j + p_ip_j^2) + n^{(2)}p_ip_j =
O(n^4 p_i^2 p_j^2 + n^2 p_i p_j), \forall i \ne j$.
\item $E(N_i^4) = n^{(4)}p_i^4 + 6n^{(3)}p_i^3 + 7n^{(2)}p_i^2 + np_i = O(n^4 p_i^4 + np_i)$.
\end{enumerate}

\section{Proof of Lemma 2}
\label{lemmaProof}
%Define
%\begin{eqnarray*}
%f(x_{1i},x_{2i}) &=& \left(\frac{x_{1i}}{n_1} -\frac{x_{2i}}{n_2} \right)^2 - \frac{x_{1i}}{n_1^2} - \frac{x_{2i}}{n_2^2}-(p_{1i}-p_{2i})^2 \nonumber \\
%%&&- 2(p_{1i}-p_{2i})\left(\frac{x_{1i}}{n_1} - \frac{x_{2i}}{n_2} \right) + 2(p_{1i}-p_{2i})^2 \\
%%&=&  f^*(x_{1i},x_{2i}) -(p_{1i}-p_{2i})^2 - 2(p_{1i}-p_{2i})\left(\frac{x_{1i}}{n_1} - \frac{x_{2i}}{n_2} \right) + 2(p_{1i}-p_{2i})^2.
%\end{eqnarray*}
%Note that, under the condition $U_1=U_2=0$, $f(X_{1i},X_{2i})= f^*(X_{1i},X_{2i})-(p_{1i}-p_{2i})^2$.

%Note that, under $H_0$, we have $f_i(x_{1i},x_{2i}) = \left(\frac{x_{1i}}{n_1} -\frac{x_{2i}}{n_2} \right)^2 - \frac{x_{1i}}{n_1^2} - %\frac{x_{2i}}{n_2^2} $ which does not depend on unknown parameters, $p_{1i}$ and $p_{2i}$.

%For our definition of the function $f$, the following conditions of Lemma~\ref{LemmaMorris} are met:
%when $X_{1i}$ and $X_{2i}$ are independent Poisson random variables with parameters $\lambda_{1i} = n_1 p_{1i}$ and $\lambda_{2i}=n_2 p_{2i}$,
%$E(f_i(X_{1i}, X_{2i})) = 0$ as $n_c \rightarrow \infty$ for $c \in \{1,2\}$ and $\rcov(f_i,X_{1i})=\rcov(f_i,X_{2i})=0$ as shown in %(\ref{cov_f_x}) below.

\begin{enumerate}
\item When $X_{1i}$ and $X_{2i}$ are independent Poisson with
$Poisson(\lambda_{1i})$ and $Poisson(\lambda_2i)$ for $\lambda_{1i}=n_1p_{1i}$ and $\lambda_{2i}=n_2p_{2i}$, we have
\begin{eqnarray*}
E(f_i(X_{1i}, X_{2i})) &=&  E ( \frac{X_{1i}}{n_1} -\frac{X_{2i}}{n_2})^2 - E(\frac{X_{1i}}{n_1^2} + \frac{X_{2i}}{n_2^2})
- 2( p_{1i} -p_{2i}) E\left( \frac{X_{1i}}{n_1} -\frac{X_{2i}}{n_2} \right) + (p_{1i} -p_{2i} )^2 \\
&=& \frac{\lambda_{1i}}{n_1^2} + \frac{\lambda_{2i}}{n_2} + \left(\frac{\lambda_{1i}}{n_1} - \frac{\lambda_{2i}}{n_2} \right)
-  \left( \frac{\lambda_{1i}}{n_1^2} + \frac{\lambda_{2i}}{n_2}\right)
  -    2( \frac{\lambda_{1i}}{n_1} - \frac{\lambda_{2i}}{n_2})^2 + ( \frac{\lambda_{1i}}{n_1} - \frac{\lambda_{2i}}{n_2})^2  =0
\end{eqnarray*}
\item
Using the independence of $X_{1i}$ and $X_{2i}$, we have
\allowdisplaybreaks
\begin{align}
\rcov(f_i (X_{1i},X_{2i}), X_{1i}) &= \rcov \left( \left( \frac{X_{1i}}{n_1} - \frac{X_{2i}}{n_2} \right)^2 - \frac{X_{1i}}{n_1^2}- \frac{X_{2i}}{n_2^2}
-2(p_{1i}-p_{2i})\left(\frac{X_{1i}}{n_1} - \frac{X_{2i}}{n_2} \right) , X_{1i}  \right) \nonumber \\
&= \rcov \left(  \frac{X_{1i}^2}{n_1^2} - \frac{2X_{1i}X_{2i}}{n_1n_2} -\frac{X_{1i}}{n_1^2} -2(p_{1i}-p_{2i})\frac{X_{1i}}{n_1}    , X_{1i}  \right) \nonumber \\
&= 2p_{1i}(p_{1i}-p_{2i}) + \frac{p_{1i}}{n_1} - \frac{p_{1i}}{n_1}- 2(p_{1i}-p_{2i})p_{1i}  \nonumber \\
&= 0. \label{cov_f_x}
\end{align}
To obtain  \eqref{cov_f_x}, we use
\begin{eqnarray*}
\rcov \left( \left( \frac{X_{1i}}{n_1} - \frac{X_{2i}}{n_2} \right)^2 , X_{1i} \right) &=& 2p_{1i}(p_{1i}-p_{2i}) + \frac{p_{1i}}{n_1}  \\
\rcov (X_{1i}^2,X_{1i}) &=&  2n_1^2 p_{1i}^2 + n_1 p_{1i} \\
\rcov (X_{1i} X_{2i}, X_{1i}) &=&  n_1 n_2 p_{1i} p_{2i}
\end{eqnarray*}
Similarly, we also obtain $\rcov(f_i(X_{1i}, X_{2i}), X_{2i}) = 0$.

\item Next we calculate $s_i^2= Var(f_i(X_{1i},X_{2i}))$, which is needed for the calculation of $\sigma^2_k=\sum_{i=1}^k s_i^2$.
Let $f^*_i (X_{1i},X_{2i}) = \left(\frac{X_{1i}}{n_1} - \frac{X_{2i}}{n_2} \right)^2 - \frac{X_{1i}}{n_1^2}
-\frac{X_{2i}}{n_2^2} $ and   $\xi_i = p_{1i}-p_{2i}$, then
\begin{eqnarray*}
s_i^2 &=& Var\left(  f^*(X_{1i}, X_{2i} )\right) + 4 \xi_i^2 Var\left(\frac{X_{1i}}{n_1} + \frac{X_{2i}}{n_2} \right)
-4 \xi_i Cov \left( f^*(X_{1i},X_{2i}),  \frac{X_{1i}}{n_1} -\frac{X_{2i}}{n_2}     \right) \\
&=& 2\left(\frac{p_{1i}}{n_1} + \frac{p_{2i}}{n_2} \right)^2
%&=& \frac{p_{1i}}{n_1^3} + \frac{2p_{1i}^2}{n_1^2} + \frac{4 p_{1i} p_{2i}}{n_1n_2} + \frac{p_{2i}}{n_2^3} + \frac{2p_{2i}^2}{n_2^2} \nonumber \\
%&&+ \frac{p_{1i}}{n_1} \left( \frac{1}{n_1} + 2(p_{1i} - p_{2i}) \right)^2 +  \frac{p_{2i}}{n_2} \left( \frac{1}{n_2} - 2(p_{1i} - p_{2i}) \right)^2  \nonumber \\
%&&- \frac{1}{n_1} \left( \frac{1}{n_1} + 2(p_{1i} - p_{2i}) \right) \left( 2p_{1i}(p_{1i}-p_{2i}) + \frac{p_{1i}}{n_1}    \right) \nonumber \\
%&&-  \frac{1}{n_2} \left( \frac{1}{n_2} - 2(p_{1i} - p_{2i}) \right) \left( 2p_{2i}(p_{1i}-p_{2i}) + \frac{p_{2i}}{n_2}    \right) \nonumber \\
%&=&   \frac{p_{1i}}{n_1^3} + \frac{p_{2i}}{n_2^3} + \frac{2p_{1i}p_{2i}}{n_1^2} + \frac{4p_{1i}p_{2i}}{n_1n_2} + \frac{2p_{1i}p_{2i}}{n_2^2}   \nonumber \\
%&=& \frac{p_{1i}}{n_1^3} + \frac{p_{2i}}{n_2^3} + 2 \left( \frac{1}{n_1} + \frac{1}{n_2} \right)^2 p_{1i}p_{2i}.  \label{sigma2}
\end{eqnarray*}
where
\begin{eqnarray}
Var\left( \left( \frac{X_{1i}}{n_1} - \frac{X_{2i}}{n_2} \right)^2 - \frac{X_{1i}}{n_1^2} -\frac{X_{2i}}{n_2^2} \right)
&=& 2\sumk \left(\frac{p_{1i}}{n_1} + \frac{p_{2i}}{n_2} \right)^2 + 4 \sumk \xi_i^2 \left(\frac{p_{1i}}{n_1} +\frac{p_{2i}}{n_2} \right)\\
Var\left( \frac{X_{1i}}{n_1^2} + \frac{X_{2i}}{n_2^2}   \right)&=&  \frac{p_{1i}}{n_1}  + \frac{p_{2i}}{n_2} \\
-4 \xi_i Cov\left(f^*(X_{1i},X_{2i}),  \left(\frac{X_{1i}}{n_1} - \frac{X_{2i}}{n_2}  \right)  \right)
&=& -8 \xi_i^2 \left(\frac{p_{1i}}{n_1} + \frac{p_{2i}}{n_2} \right).
\end{eqnarray}

\end{enumerate}
Therefore, $\sigma_k^2 =\sumk s_i^2 =  2 \sumk \left(\frac{p_{1i}}{n_1} + \frac{p_{2i}}{n_2}  \right)^2$.
Additionally,  since $f(X_{1i},X_{2i}) = f^*(X_{1i}, X_{2i})$ under $H_0$,
we have $\sumk  Var(f^*(X_{1i},X_{2i})) =   2 \sumk \left(\frac{p_{1i}}{n_1} + \frac{p_{2i}}{n_2}  \right)^2$ under $H_0$.

\section{Proof of Lemma 3}
\label{Apdx:lemmaMorrisCondition}

Let us first find $f_i(L_{1i}+M_{1i},L_{2i}+M_{2i}) - f_i(L_{1i},L_{2i})$ using the Taylor Series. The general form of the Taylor expansion is:
\begin{align*}
f_i(x+h_1,y+h_2) &- f_i(x,y) \nonumber \\
=  f_{i,x}(x,y)h_1 &+ f_{i,y}(x,y)h_2 + \frac{1}{2!}f_{i,xx}(x,y)h_1^2 + \frac{1}{2!}f_{i,yy}(x,y)h_2^2 + \frac{2}{2!}f_{i,xy}(x,y)h_1h_2
\end{align*}
where $f_{i,x} = \frac{\partial }{\partial x}f$, $f_{i,xx}= \frac{\partial }{\partial^2x}f$ and others are similarly defined.
Note that there is no remainder term from Taylor expansion since $f_i$ is a quadratic function.

Using this formula and the definition of $f$ given in (19), we have:
\begin{eqnarray*}
&&f_{i}(L_{1i} + M_{1i},L_2+M_{2i}) - f_i(L_{1i},L_{2i}) \\
%=& \left( \frac{2}{n_1} \left( \frac{L_{1i}}{n_1} - \frac{L_{2i}}{n_2} \right) - \frac{1}{n_1^2} -
% \frac{2(p_{1i}-p_{2i})}{n_1}  \right)M_{1i}  \nonumber \\
%           &   + \left( -\frac{2}{n_2} \left( \frac{L_{1i}}{n_1} - \frac{L_{2i}}{n_2} \right) - \frac{1}{n_2^2} -   \frac{2(p_{1i}-p_{2i})}{n_2}   \right)M_{2i}  \nonumber \\
%	&   + \frac{1}{2!}\frac{2}{n_1^2}M_{1i}^2 + \frac{1}{2!}\frac{2}{n_2^2}M_{2i}^2 - \frac{2}{2!}\frac{2}{n_1n_2}M_{1i}M_{2i} \nonumber \\
%%	&= \left( \frac{2}{n_1} \left( \frac{L_{1i}}{n_1} - \frac{L_{2i}}{n_2} \right)  -\frac{1}{n_1^2} \right)M_{1i} \\
%%	&  + \left( -\frac{2}{n_2} \left( \frac{L_{1i}}{n_1} - \frac{L_2}{n_2} \right)  -\frac{1}{n_2^2} \right)M_{2i}  \\
%%	&  + \frac{M_{1i}^2 }{n_1^2}+ \frac{M_{2i}^2}{n_2^2} - \frac{2M_{1i}M_{2i}}{n_1n_2} \\
	&=&
\underbrace{2\left( \frac{L_{1i}}{n_1} - \frac{L_{2i}}{n_2} \right) \left( \frac{M_{1i}}{n_1} - \frac{M_{2i}}{n_2} \right) + \left( \frac{M_{1i}}{n_1} - \frac{M_{2i}}{n_2} \right)^2}_{Q_i}
\underbrace{	-  \left( \frac{1}{n_1^2} +  \frac{2(p_{1i}-p_{2i})}{n_1} \right)M_{1i} - \left(\frac{1}{n_2^2} +
 \frac{2(p_{2i}-p_{1i})}{n_2} \right)M_{2i}}_{R_i}   \label{TaylorResult}
 \end{eqnarray*}
where $A_i = \left( \frac{L_{1i}}{n_1} - \frac{L_{2i}}{n_2} \right)$, $B_i = \left( \frac{M_{1i}}{n_1} - \frac{M_{2i}}{n_2} \right)$, $Q_i=2A_iB_i + B_i^2$ and $R_i= -  \left( \frac{1}{n_1^2} +
 \frac{2(p_{1i}-p_{2i})}{n_1} \right)M_{1i} - \left(\frac{1}{n_2^2} +
 \frac{2(p_{2i}-p_{1i})}{n_2} \right)M_{2i}$.
We need to show
\begin{eqnarray*}
\frac{1}{\sigma_k^2}  E\left( \sumk (Q_i + R_i)\right)^2 &=&
\underbrace{\frac{1}{\sigma_k^2}\sumk E(Q_i^2)}_{W_1}  +
\underbrace{\frac{1}{\sigma_k^2} \sumk E(R_i^2)}_{W_2} +
\underbrace{\frac{1}{\sigma_k^2}\sum_{i\neq j}E (Q_i Q_j)}_{W_3}
+ \underbrace{\frac{2}{\sigma_k^2}\sum_{i\neq j}E (Q_i R_j)}_{W_4}
+ \underbrace{\frac{1}{\sigma_k^2}\sum_{i\neq j}E (R_i R_j)}_{W_5} \\
&\rightarrow& 0.
\end{eqnarray*}

\begin{enumerate}
\item
We show $   W_1= \frac{1}{\sigma_k^2} E \sumk Q_i^2 \rightarrow 0$.
\begin{eqnarray*}
	E(Q_i^2) &=& E \left[ (2A_iB_i + B_i^2)^2 \right] =4 \underbrace{ E(A_i^2)}_{(I)} \underbrace{E(B_i^2 )}_{(II)}
+ 4 \underbrace{E(A_i)}_{(III)} \underbrace{E(B_i^3)}_{(IV)} + \underbrace{E(B_i^4 )}_{(V)}. \nonumber
\end{eqnarray*}
We'll look at (I)-(V) separately below.
Since  $L_{1i}, L_{2i}, M_{1i}$ and $M_{2i}$ are independent,
 and $\max_{1\leq i\leq k}p_{ci} =o(1)$ for $c=1,2$, we have
\begin{eqnarray*}
	(I) &=& E \left[ \left( \frac{L_{1i}}{n_1} - \frac{L_{2i}}{n_2} \right)^2 \right]
                = Var \left(\frac{L_{1i}}{n_1} - \frac{L_{2i}}{n_2} \right) + \left[ E\left(\frac{L_{1i}}{n_1} - \frac{L_{2i}}{n_2} \right) \right]^2 \nonumber \\
%	&=& \left(\frac{p_{1i}}{n_1}+ \frac{p_{2i}}{n_2} \right) (1+o(1)) + v^2\left( \frac{p_{1i}}{\sqrt{n_1}} - \frac{p_{2i}}{\sqrt{n_2}}\right)^2 \nonumber \\
&\leq& \left(\frac{p_{1i}}{n_1}+ \frac{p_{2i}}{n_2} \right) (1+o(1)) + 2 v^2 \left(\frac{p_{1i}^2}{n_1}+ \frac{p_{2i}^2}{n_2}  \right) \nonumber \\
&=& O\left(\frac{p_{1i}}{n_1}+ \frac{p_{2i}}{n_2} \right) = O\left( \frac{p_{1i} + p_{2i}}{n}\right)
\end{eqnarray*}

where $o(\cdot)$ and $O(\cdot)$ are uniform in $i$ and the last equality is from $ n_1/n \rightarrow C \in (0,1)$.
Similarly, we obtain
\begin{eqnarray*}
	(II) &=& E \left[ \left( \frac{M_{1i}}{n_1} - \frac{M_{2i}}{n_2} \right)^2 \right]
	= h O\left(  \frac{p_{1i} + p_{2i}}{n^{3/2}} \right) (1+o(1))\\
 (III) & = & E \left( \frac{L_{1i}}{n_1} - \frac{L_{2i}}{n_2}   \right)
\leq |p_{1i}-p_{2i}| = |\xi_i|.
\end{eqnarray*}
where $o(\cdot)$ is uniform in $i$.
Using Jensen's inequality, we have
\begin{eqnarray*} \label{eqnIV}
(IV) &=& E \left[ \left(   \frac{M_{1i}}{n_1}- \frac{M_{2i}}{n_2}  \right)^3 \right] \leq  2^2 \left(E \left( \frac{M_{1i}^3}{n_1^3}\right) + E \left( \frac{M_{2i}^3}{n_2^3} \right) \right)
= h O \left( \frac{p_{1i}^3 + p_{2i}^3}{n^{1.5}} + \frac{p_{1i} + p_{2i}}{n^{2.5}}  \right) \\
(V) &=&  E \left[ \left( \frac{M_{1i}}{n_1}- \frac{M_{2i}}{n_2}\right)^4 \right] \leq  2^3 \left(E \left( \frac{M_{1i}^4}{n_1^4}\right) + E \left( \frac{M_{2i}^4}{n_2^4} \right)  \right)
=   h O \left( \frac{p_{1i}^4+p_{2i}^4}{n^2}+\frac{p_{1i}+p_{2i}}{n^{3.5}}  \right). \label{eqnV}
\end{eqnarray*}

As the next step in showing that Equation~(23) holds, we need to sum over $k$ terms, divide by $\sigma_k^2
\asymp   n^{-2} \vari$, and show convergence to 0.

\begin{eqnarray*} \label{EQ2}
&&\frac{1}{\sigma_k^2}  \sum_{i=1}^k E(Q_i^2) =
h O \left(\frac{\sum_{i=1}^k (p_{1i}+p_{2i})^2}{n^{1/2} \vari} \right) + O\left(  \sumk \left(\frac{p_{1i}^{3.5} + p_{2i}^{3.5}}{\vari}
+ \frac{ (p_{1i}^{1.5} + p_{2i}^{1.5})\xi_i}{n \vari} \right) \right) \nonumber \\
&=& h O \left(\frac{ \psum  }{n^{1/2}  \vari} \right)
+ O\left( (\max p_{1i}^{1.5} +\max p_{2i}^{1.5}  )    \frac{\psum}{\vari}
+ \frac{(\max p_{1i}^{.5} + \max p_{2i}^{.5})|\boldsymbol{\xi}\cdot({\bf P}_1 +{\bf P}_2 )|}{n \vari}  \right) \\
&=& o(1)
\end{eqnarray*}
using $  |\boldsymbol{\xi} \cdot ({\bf P}_1 + {\bf P}_2) |  = O(\vari)$.
%Therefore, (\ref{EQ2}) converges to 0.
\item  We show $ W_2 = \frac{1}{\sigma_k^2} E (\sumk R_i^2) \rightarrow 0$.
%We now consider $E(R_i^2)$.
Let $\xi_{i}= (p_{1i}-p_{2i})$ and define
$\kappa_{1i}=\frac{1}{n_1^2} a+ \frac{2(p_{1i}-p_{2i})}{n_1}=\frac{1}{n_1^2}+\frac{2\xi_i}{n_1}$ and
$\kappa_{2i}=\frac{1}{n_2^2}-\frac{2\xi_i}{n_2}$. Then $R_i= -\kappa_{1i}M_{1i} -\kappa_{2i}M_{2i}$ and using Jensen's inequality we have
\begin{eqnarray*}
E(R_i^2) &=& E \left[ \left( \kappa_{1i}M_{1i} + \kappa_{2i}M_{2i}\right)^2 \right]  \leq 2 \kappa_{1i}^2 E(M_{1i}^2) + 2\kappa_{2i}^2 E(M_{2i}^2)   \nonumber \\
&=& O\left( \frac{h(p_{1i}+p_{2i})}{n_1^{3.5}} + \frac{h\xi_i^2 (p_{1i}+p_{2i})}{n_1^{1.5}} + \frac{h^2 (p_{1i}^2+p_{2i}^2)}{n_1^3} + \frac{h^2 \xi_i^2 (p_{1i}^2+p_{2i}^2)}{n_1} \right) \nonumber \\
\end{eqnarray*}
Again, we need to sum over $k$ terms and divide by $\sigma_k^2$ and obtain
\begin{eqnarray*}
\label{ER2}
\frac{4}{\sigma_k^2}  \sum_{i=1}^k E(R_i^2) &=&  O\left( \frac{h\sum_{i=1}^k (p_{1i}+p_{2i})}{n^{1.5}\vari} \right) + O\left(\frac{ \sqrt{n} h\sum_{i=1}^k \xi_i^2 (p_{1i}+p_{2i})}{\vari} \right) \nonumber \\
&& + O\left(\frac{ h^2(\psum)}{n \vari}\right) +
 O\left(\frac{ n h^2\sum_{i=1}^k \xi_i^2 (p_{1i}^2+p_{2i}^2)}{\vari} \right) \nonumber \\
 &=& o(\frac{h}{\sqrt{n}}) + h  O\left( \frac{\sqrt{n}|| \boldsymbol{\xi}*(\sqrt{{\bf P}_1} +\sqrt{{\bf P}_2}) ||_2^2  }{ \vari} \right) + O( \frac{h^2}{n}) +
 h   O\left( \frac{n|| \boldsymbol{\xi}*({{\bf P}_1} +{{\bf P}_2}) ||^2_2  }{\vari}  \right) \nonumber \\
&=&  o(h) + \underbrace{O(h)}_{(*)} + o(h) + \underbrace{O(h)}_{(**)}  \label{secondlast}  \\
&=& o(1)~~~~\mbox{as $h \rightarrow 0$} \nonumber
\end{eqnarray*}
where the second term $(*)$ and  the fourth term $(**)$ are from  8 and 4 in Lemma.S, respectively.

\item We show $W_3= \frac{1}{\sigma_k^2}E( \sum_{i \neq j}Q_iQ_j) \rightarrow 0$.
We first have
\begin{align*}
\frac{1}{\sigma_k^2}\sum_{i\neq j} E(Q_iQ_j) &=  \sum_{i\neq j} E\left[ (2A_iB_i + B_i^2)(2A_jB_j + B_j^2) \right]  \nonumber \\
&=\underbrace{ \frac{4}{\sigma_k^2} \sum_{i\neq j} E(A_iA_j)E(B_iB_j)}_{H_1} +  \underbrace{\frac{4}{\sigma_k^2} \sum_{i \neq j}E (A_i)E(B_iB_j^2)}_{H_2}  +
\underbrace{\frac{4}{\sigma_k^2}\sum_{i\neq j} E(B_i^2B_j^2)}_{H_3} \nonumber \\
%&= 4E(A_iA_j)E(B_iB_j)+ E(B_i^2B_j^2) \nonumber \\
%&=  (I) + (II) + (III).
\end{align*}
We'll look at each term separately below.
\begin{align*}
E(A_iA_j)&= E \left[ \frac{L_{1i}L_{1j}}{n_1^2} - \frac{L_{1i}L_{2j}}{n_1n_2} - \frac{L_{2i}L_{1j}}{n_1n_2} + \frac{L_{2i}L_{2j}}{n_2^2}   \right]  \nonumber \\
	&= p_{1i}p_{1j} \frac{(n_1+u_1 n_1^{1/2})(n_1+u_1 n_1^{1/2}-1)}{n_1^2} - (p_{1i}p_{2j} + p_{1j}p_{2i})\frac{(n_1+u_1 n_1^{1/2})(n_2+u_2 n_2^{1/2})}{n_1n_2} \nonumber \\
	&+ p_{2i}p_{2j} \frac{(n_2+u_2 n_2^{1/2})(n_2+u_2 n_2^{1/2}-1)}{n_2^2}\nonumber \\
% &= p_ip_j \left(  \frac{v^2}{n_1} - \frac{1}{n_1} - \frac{v}{n_1^{3/2}} - 2\frac{v^2}{n_1^{1/2}n_2^{1/2}} + \frac{v^2}{n_2} - \frac{1}{n_2} - \frac{v}{n_2^{3/2}} \right)
&=  O \left( |\xi_i||\xi_j| + u\left( \frac{ |\xi_i||\xi_j|}{n_1^{.5}}   \right)  + u^2 \left(\frac{ |\xi_i||\xi_j| }{n_1} \right) \right)
~~~~~\mbox{where $u=\max(u_1,u_2)$} \nonumber \\
&= O \left( |\xi_i||\xi_j| \left(  1 + \frac{u}{n_1^{.5}}  \right)^2 \right) = O(|\xi_i||\xi_j|)\\
E(B_i B_j) =&  E \left[ \frac{M_{1i}M_{1j}}{n_1^2} - \frac{M_{1i}M_{2j}}{n_1n_2} - \frac{M_{2i}M_{1j}}{n_1n_2} + \frac{M_{2i}M_{2j}}{n_2^2}   \right]  \nonumber \\
&= p_{1i}p_{1j} \frac{h_1 n_1^{1/2}(h_1n_1^{1/2}-1)}{n_1^2} - (p_{1i}p_{2j} +p_{1j}p_{2i} ) \frac{h_1h_2n_1^{1/2}n_2^{1/2}}{n_1n_2} +
p_{2i}p_{2j}\frac{h_2 n_2^{1/2}(h_2 n_2^{1/2}-1)}{n_2^2}\nonumber \\
%&= h O\left( \frac{ p_{1i}p_{1j} + p_{1i}p_{2j} +p_{1j}p_{2i}+ p_{2i}p_{2j}}{n_1}    \right)
&= h^2 O\left( \frac{|\xi_i||\xi_j|}{n}   \right)
\end{align*}
where $h=\max(h_1,h_2)$.  %
We need to sum (I) over $k(k-1)$ terms and divide by $\sigma_k^2$ as follows;
\begin{eqnarray*} \label{EAAEBB}
H_1=\frac{4}{\sigma_k^2}  \sum_{i \ne j}^k E(A_iA_j)E(B_iB_j)  &=
h^2 O \left( \frac{n\sum_{i \ne j} \xi_i^2\xi_j^2}{\vari}  \right)
\leq    h^2 O \left(  \frac{ n || \boldsymbol{\xi}||_2^4 }{\vari}  \right)
= O( \frac{h^2}{n})=o(1)
\end{eqnarray*}
from the condition in Theorem 1, $|| \boldsymbol{\xi} ||_2^4 = O(\frac{1}{n^2} \vari)$.

%We show here that (\ref{EAAEBB}) converges to 0.
%\begin{align}
%\frac{4h \sum_{i \ne j}^k p_i^2p_j^2}{2 \left(\frac{1}{n_1}  + \frac{1}{n_2}  \right)^2 \sum_{i=1}^k p_i^2} &  \left(  \frac{v^2}{n_1} - \frac{1}{n_1} - \frac{v}{n_1^{3/2}} - 2\frac{v^2}{n_1^{1/2}n_2^{1/2}} + \frac{v^2}{n_2} - \frac{1}{n_2} - \frac{v}{n_2^{3/2}} \right)  \nonumber \\
%& \quad \quad \times \left( \frac{h}{n_1} - \frac{2h}{n_1^{1/2}n_2^{1/2}} + \frac{h}{n_2} - \frac{1}{n_1^{3/2}} - \frac{1}{n_2^{3/2}}  \right)  \nonumber \\
%& \sim \frac{h n_1^2 \sum_{i \ne j}^k p_i^2p_j^2}{ \sum_{i=1}^k p_i^2} \left(  \frac{1}{n_1} \right) \left(  \frac{h}{n_1}   \right) \nonumber \\
%&= \frac{h^2 \sum_{i \ne j}^k p_i^2p_j^2}{ \sum_{i=1}^k p_i^2} \nonumber \\
%& < \frac{h^2 \sum_{i=1}^k p_i^2}{ \sum_{i=1}^k p_i^2} \nonumber \\
%& = h^2 \nonumber \\
%& \rightarrow 0 \text{ as } h \rightarrow 0 \text{, and } n_1,n_2 \rightarrow \infty
%\end{align}

%From (\ref{eqn_EA}) we have that (III)$= O(|\xi_i|)$.
Additionally, we have
\begin{eqnarray*}
E(A_i) & = &  O(|\xi_i|)\\
E(B_iB_j^2) &=&  E \left[  \left( \frac{M_{1i}}{n_1} - \frac{M_{2i}}{n_2}   \right) \left( \frac{M_{1j}}{n_1} - \frac{M_{2j}}{n_2}   \right)^2  \right] \leq
 2 E \left[  \left( \frac{M_{1i}}{n_1} + \frac{M_{2i}}{n_2}   \right) \left( \frac{M_{1j}^2}{n_1^2} +  \frac{M_{2j}^2}{n_2^2}   \right)  \right]   \nonumber \\
%&=&
%E \left[  \left( \frac{M_{1i}}{n_1} - \frac{M_{2i}}{n_2}   \right) \left(  \frac{M_{1j}^2}{n_1^2} - \frac{2M_{1j}M_{2j}}{n_1n_2} + \frac{M_{2j}^2}{n_2^2}   \right) \right] \nonumber \\
&=& \underbrace{2  E \left[ \frac{M_{1i}M_{1j}^2}{n_1^3} \right]}_{ \textcircled{1} }
+   \underbrace{2E \left[\frac{M_{1i}M_{2j}^2}{n_1n_2^2} \right] }_{\textcircled{2} }
+    \underbrace{2 E\left[ \frac{M_{2i}M_{1j}^2}{n_2n_1^2} \right]}_{\textcircled{3} }
+    \underbrace{2E\left[ \frac{M_{2i}M_{2j}^2}{n_2^3} \right]}_{\textcircled{4}}
\end{eqnarray*}
which are
\begin{eqnarray*}
\textcircled{1} &=& h O \left( \frac{p_{1i}p_{1j}^2}{n^{1.5}} + \frac{p_{1i}p_{1j}}{n^2}  \right),~
\textcircled{2} = h O \left( \frac{p_{1i}p_{2j}^2}{n^{1.5}} + \frac{p_{1i}p_{2j}}{n^2} \right),\\
\textcircled{3} &=& h O \left( \frac{p_{2i}p_{1j}^2}{n^{1.5}} + \frac{p_{1i}p_{2j}}{n^2} \right),~
\textcircled{4} = h O \left( \frac{p_{2i}p_{2j}^2}{n^{1.5}} + \frac{p_{2i}p_{2j}}{n^2}  \right).
\end{eqnarray*}
Therefore we have
\begin{eqnarray*}
H_2 &=&\frac{4}{\sigma_k^2} \sum_{i \ne j} E(A_i)E(B_iB_j^2)\\
 &\le& h O \left( \frac{\sum_{i \ne j}   |\xi_i|
\left( \frac{p_{1i}p_{1j}^2}{n^{1.5}} + \frac{p_{1i}p_{1j}}{n^2} + \frac{p_{1i}p_{2j}^2}{n^{1.5}} + \frac{p_{1i}p_{2j}}{n^2}
+\frac{p_{2i}p_{1j}^2}{n^{1.5}}
+\frac{p_{2i}p_{2j}^2}{n^{1.5}}
+\frac{p_{2i}p_{2j}}{n^{2}}
\right)}
{ n^{-2} \left( \vari \right) }    \right) \nonumber \\
& =& h O \biggl( \frac{ \sqrt{n} \sum_{i \ne j} |\xi_i| p_{1i}p_{1j}^2 }{\vari}
+ \frac{ \sum_{i \ne j} |\xi_i| p_{1i}p_{1j}}{\vari} + \frac{\sqrt{n} \sum_{i \ne j} |\xi_i| p_{1i}p_{2}^2 }{\vari} \nonumber \\
&&+ \frac{ \sum_{i \ne j} |\xi_i| p_{1i}p_{2j} }{\vari }
+ \frac{\sqrt{n}\sum_{i \ne j} |\xi_i| p_{2i}p_{2j}^2}{\vari} +  \frac{\sum_{i \ne j} |\xi_i| p_{2i}p_{2j}}{\vari}  \biggr)  \nonumber \\
&=& h O \left(  \frac{\sqrt{n} \left( \abs{{\boldsymbol \xi}} \cdot {\bf P}_1 \right) || {\bf P}_1 ||_2^2}{\vari}  \right)
+  h O \left(  \frac{  \abs{{\boldsymbol \xi}} \cdot {\bf P}_1  }{\vari}  \right)
+ h O \left(  \frac{\sqrt{n} \left( \abs{{\boldsymbol \xi}} \cdot {\bf P}_1 \right) ||{\bf P}_2||_2^2  }{\vari}  \right)  \nonumber \\
&&+ h O \left(  \frac{ \abs{{\boldsymbol \xi}} \cdot {\bf P}_1 }{\vari}  \right)
+  h O \left(  \frac{  (\abs{{\boldsymbol \xi}} \cdot {\bf P}_2) ||{\bf P}_2 ||_2^2    }{\vari}  \right)
+ hO\left(\frac{|\boldsymbol \xi |\cdot {\bf P}_2}{\vari} \right) \nonumber \\
&=& hO\left(   \frac{ |\boldsymbol \xi|\cdot ( {\bf P}_1 + {\bf P}_2) (\psum)}{\vari}  \right)
+ hO\left( \frac{|\boldsymbol \xi|\cdot ({\bf P}_1 + {\bf P}_2)}{\vari}\right) \\
&=&O(h) +O(h) =O(h)
\end{eqnarray*}
{ where the second last equality is obtained from 5 in Lemma.S1 and   $||{\bf P}_1 + {\bf P}_2 ||_2^2 =O(1)$.  }

Lastly, $H_3$ is equivalent to the following:
\begin{align*}
H_3 &= \frac{1}{\sigma_k^2} \sum_{i \neq j}E \left[ \left( \frac{M_{1i}}{n_1} - \frac{M_{2i}}{n_2} \right)^2   \left( \frac{M_{1j}}{n_1} - \frac{M_{2j}}{n_2} \right)^2 \right] \nonumber \\
&\leq  \frac{4}{\sigma_k^2}\sum_{i \neq j} E \left[ \left( \frac{M_{1i}^2}{n_1^2}  + \frac{M_{2i}^2}{n_2^2}  \right)  \left( \frac{M_{1j}^2}{n_1^2} + \frac{M_{2j}^2}{n_2^2}  \right) \right] \nonumber \\
&\leq  \frac{4}{\sigma_k^2}\sum_{i \neq j} \left\{ E\left( \frac{M_{1i}^2M_{1j}^2}{n_1^4} \right) + E\left( \frac{M_{1i}^2M_{2j}^2}{n_1^4} \right) +
 E\left( \frac{M_{2i}^2M_{1j}^2}{n_1^4} \right) +   E\left( \frac{M_{2i}^2M_{2j}^2}{n_1^4} \right) \right\}   \\
&= \frac{h^4}{\sigma_k^2} O\left( \sum_{i \neq j} \frac{p_{1i}^2p_{2j}^2}{n} \right) + \frac{h^2}{\sigma_k^2}O\left( \sum_{i \neq j} \frac{p_{1i}p_{2j}}{n^3}  \right)
\end{align*}
from  $ E\left( \frac{M_{1i}^2M_{1j}^2}{n_1^4} \right) =   O(h^4\frac{p_{1i}^2p_{1j}^2}{n^2} + h^2\frac{p_{1i}p_{1j}}{n^3})$ and similar results for the other terms.
Next, to show that $ H_3 = \frac{1}{\sigma_k^2}\sum_{i \ne j}^k E(B_i^2B_j^2) \rightarrow 0$ as follows;
using $\sum_{i\neq j} p_{1i}^2 p_{2j}^2  \leq ||{\bf P}_1 ||_2^2 ||{\bf P}_2 ||_2^2 \leq || {\bf P}_1 ||_2 ||{\bf P}_2 ||_2
\leq ( ||{\bf P}_1 ||_2^2 + ||{\bf P}_2 ||_2^2) =O(\vari)$ from 2 in Lemma.S1 and $||{\bf P}_c ||_2 \leq 1$ for $c=1,2$, we have
\begin{align*}
H_3
 &=  h^4 O\left( \frac{\sum_{i \ne j}^k p_{1i}^2p_{1j}^2}{\vari} \right)
 +  h^2 O\left( \frac{\sum_{i \ne j}^k p_{1i}p_{1j} }{n \left( \vari \right)}  \right)\\
 &= h^4 O\left( \frac{ ||{\bf P}_1 ||_2^2 ||{\bf P}_2 ||_2^2   }{\vari } \right)
 + h^2 O\left(\frac{1}{n(\vari)} \right)
 = \underbrace{h^4 O(1)}_{(I)} + \underbrace{h^2 O(1)}_{(II)}=O(h^2)
\end{align*}
where the second term is obtained from  the condition 3 in Theorem 1.

%\begin{align}
%\frac{\textcircled{5}}{\sigma_k^2}
%%: \frac{h^2\sqrt{n_1n_2}(h\sqrt{n_1}-1)(h\sqrt{n_2}-1) \sum_{i \ne j}^k p_i^2p_j^2}{2n_1^2n_2^2 \left(\frac{1}{n_1}  + \frac{1}{n_2}  \right)^2 \sum_{i=1}^k p_i^2}  &\sim   \frac{h^4 \sum_{i \ne j}^k p_i^2p_j^2}{\sum_{i=1}^k p_i^2} \nonumber \\
%&= O\left(\frac{h^4 \sum_{i \ne j}^k p_{1i}p_{2i}p_{1j}p_{2j}}{\sum_{i=1}^k p_{1i}p_{2i} + a_k} \right) = O\left(\frac{h^4 \sum_{i=1}^k p_{1i}p_{2i}}{\sum_{i=1}^k p_{1i}p_{2i} + a_k} \right) \nonumber \\
%&= O(h^4) \rightarrow 0 \text{ as } h \rightarrow 0
%\end{align}

%This concludes the proof. We have shown that the conditions of Lemma~\ref{LemmaMorris} hold and, therefore, $T \cd N(0,1)$.

\item  We show $ W_4 = \frac{1}{\sigma_k^2} E(\sum_{i\neq j} R_iR_j) \rightarrow 0$.
\begin{eqnarray*}
 && \sum_{i\neq j}   E \left[ \left( \kappa_{1i} M_{1i} + \kappa_{2i} M_{2i} \right) \left( \kappa_{1j}M_{1j} +\kappa_{2j} M_{2j} \right) \right] \nonumber \\
%= E \left(\frac{M_{1i}M_{1j}}{n_1^4} + \frac{M_{1i}M_{2j}}{n_1^2n_2^2} + \frac{M_{1j}M_{2i}}{n_1^2n_2^2} + \frac{M_{2i}M_{2j}}{n_2^4} \right) \nonumber \\
%\begin{split}
	&=& \sum_{i\neq j}\kappa_{1i}\kappa_{1j}E(M_{1i}M_{1j}) +  2 \sum_{i\neq j} \kappa_{1i}\kappa_{2j}E(M_{1i})E(M_{2j}) +\sum_{i \neq j} \kappa_{2i}\kappa_{2j} E(M_{2i}M_{2j}) \nonumber \\
%\end{split} \nonumber \\
%\begin{split}
	&=&  h O\left( n\sum_{i\neq j} \left(\frac{1}{n^4} + \frac{|\xi_{i}|+|\xi_{j}|}{n^3}
+ \frac{|\xi_i||\xi_j|}{n^2} \right)  p_{1i}p_{1j} \right)
+2h^2 O\left( n \sum_{i\neq j}\left(\frac{1}{n^4} + \frac{|\xi_{i}|+|\xi_{j}|}{n^3} + \frac{|\xi_i||\xi_j|}{n^2} \right) p_{1i}p_{2j} \right) \nonumber \\
&& +h  O\left( n \sum_{i\neq j} \left( \frac{1}{n^4} + \frac{|\xi_{i}|+|\xi_{j}|}{n^3} + \frac{|\xi_i||\xi_j|}{n^2} \right)p_{2i}p_{2j}  \right).
%\end{split} \nonumber  \\
%&= hp_ip_j \left( \frac{h}{n_1^3} - \frac{1}{n_1^{3.5}} + \frac{2h}{(n_1n_2)^{1.5}} + \frac{h}{n_2^3} - \frac{1}{n_2^{3.5}}  \right)
\end{eqnarray*}
%We use the following facts to simplify the above expressions.
%\begin{eqnarray}
%\sum_{i\neq j} p_{1i} p_{1j} = \sum_{i=1}^k p_{1i}  \sum_{j \neq i} p_{1j} \leq \sum_{i=1}^k p_{1i}=1   \\
%|\sum_{i\neq j} |\xi_i| p_{1i} p_{2j}| \le \sum_{i=1}^k |\xi_i | p_{1i}  \sum_{j \neq i} p_{2j} \leq \sum_{i=1}^k |\xi_i| p_{1i}   \\
%|\sum_{i\neq j} \xi_i \xi_j p_{1i}p_{1j}| \leq \sum_{i=1}^k |\xi_i| p_{1i} \sum_{j\neq i} |\xi_j|p_{2j}  \leq \sum_{i=1}^k |\xi_i| p_{1i}
%\end{eqnarray}
%then
Using  $\sum_{i\neq j} p_{1i} p_{2j} \leq \sum_{i=1}^k p_{1i} \sum_{j=1}^k p_{2j} =1$,
$n \left( {\bf P_1}\cdot {\bf P}_2+a_k \right) \geq \epsilon>0$ for some $\epsilon$ and
\begin{eqnarray*}
\sum_{i\neq j} (|\xi_i| +|\xi_j|) p_{1i}p_{2j} \leq \sum_{i=1}^k |\xi_i| p_{1i} \sum_{j =1}^k p_{2j}
+\sum_{j=1}^k |\xi_j| p_{2j} \sum_{i =1}^k p_{1j} \\
 \leq  \sum_{i=1}^k |\xi_i| p_{1i} + \sum_{j=1}^k |\xi_j| p_{2j} =  \sum_{i=1}^k |\xi_i|(p_{1i}+p_{2i}) =
 |\boldsymbol{\xi}|\cdot ({\bf P}_1 + {\bf P}_2),
 \end{eqnarray*}
 for $|{\boldsymbol \xi}| = (|\xi_1|,\ldots, |\xi_k|)$,
    we have
\begin{eqnarray*}
\frac{\sum_{i\neq j}E(R_i R_j)}{\sigma_k^2} &=& h^2 O\left( \frac{\sum_{i\neq j}p_{1i}p_{2j}}{n \vari  }\right)
+ h^2 O \left(\frac{\sum_{i\neq j} (|\xi_i| + |\xi_j|)p_{1i}p_{2j} }{ \vari}\right)\\
&&+ h^2 O \left(\frac{n\sum_{i\neq j}|\xi_i||\xi_j| p_{1i}p_{2j}}{\vari} \right) \nonumber \\
&=& h^2 O\left( \frac{1}{ \epsilon} \right)
+  h^2 O \left( \frac{  |{\boldsymbol \xi}|\cdot({\bf P}_1 + {\bf P}_2)}{ \vari }\right)
+  h^2 O\left(  \frac{n (|{\boldsymbol \xi}|\cdot {\bf P}_1)(|{\boldsymbol \xi}|\cdot {\bf P}_2) }{\vari }   \right).
%&=& h^2 O \left( \frac{\sum_{i\neq j}(1+n_1 |\xi_i|)(1+n_1 |\xi_j|) p_{1i}p_{2j}}{n_1\sum_{i=1}^k p_{1i}p_{2i} }  \right)
\end{eqnarray*}
{From $ |{\boldsymbol \xi}|*({\bf P}_1 + {\bf P}_2) = O(\vari)$
and   $ n (|{\boldsymbol \xi}|\cdot {\bf P}_1)(|{\boldsymbol \xi}|\cdot {\bf P}_2)  =O(\vari)$ from 6 and 7
in Lemma.S1,
we obtain $\frac{\sum_{i\neq j}E(R_i R_j)}{\sigma_k^2}  = o(1)$ as $h = o(1)$.   }

\item  We show   $W_5 = \frac{1}{\sigma_k^2} E( \sum_{i\neq j}Q_iR_j) \rightarrow 0$.
We have
\begin{align*}
\frac{1}{\sigma_k^2}\sum_{i\neq j} E(Q_iR_j) = \frac{1}{\sigma_k^2}\sum _{i \neq j}E \left[ (2A_iB_i + B_i^2)R_j   \right] &=
\underbrace{\frac{2}{\sigma_k^2}\sum_{i\neq j}E(A_iB_iR_j)}_{K_1} +
\underbrace{\frac{1}{\sigma_k^2}\sum_{i \neq j} E(B_i^2R_j)}_{K_2}
\end{align*}
We'll  look at $K_1$ and $K_2$ separately.
Since $L_{1i}s$ and $M_{1i}s$ are independent, we have $K_1  = \frac{2}{\sigma_k^2}\sum_{i\neq j} E(A_i)E(B_iR_j)$.
For $E(A_i)$, we have
\begin{eqnarray}
E(A_i) &=& E \left( \frac{L_{1i}}{n_1} - \frac{L_{2i}}{n_2}  \right) = \frac{(n_1+u_1 n_1^{1/2})p_{1i}}{n_1} - \frac{n_2+u_2 n_2^{1/2}p_{2i}}{n_2}= O(|\xi_i|) \nonumber \label{eqn_EA}
\end{eqnarray}
Additionally, we have
\begin{eqnarray*}
|E(B_iR_j)|  &\leq & \Bigg| E \left[ \left( \frac{M_{1i}}{n_1} - \frac{M_{2i}}{n_2} \right)   \left( -\kappa_1 M_{1j} - \kappa_2 M_{2j}\right) \right] \Bigg| \nonumber \\
&\leq & E \left( \frac{|\kappa_1| M_{1i}M_{1j}}{n_1} + \frac{|\kappa_2| M_{1i}M_{2j}}{n_1}
+ \frac{|\kappa_1| M_{2i}M_{1j}}{n_2} + \frac{|\kappa_2| M_{2i}M_{2j}}{n_2}  \right) \nonumber \\
%&=& - \frac{1}{n_1}  \left( \frac{1}{n_1^2} + \frac{\xi_j}{n_1}\right) hn_1^{1/2}(hn_1^{1/2}-1)p_{1i}p_{1j}
%- \frac{1}{n_1}  \left( \frac{1}{n_2^2} + \frac{\xi_j}{n_2}\right) h^2n_1^{1/2}n_2^{1/2}p_{1i}p_{2j} \nonumber \\
%&&+ \frac{1}{n_2}  \left( \frac{1}{n_1^2} + \frac{\xi_j}{n_1}\right) h^2n_1^{1/2}n_2^{1/2}p_{2i}p_{1j}
%+ \frac{1}{n_2}  \left( \frac{1}{n_2^2} + \frac{\xi_j}{n_2}\right) hn_2^{1/2}(hn_2^{1/2} - 1)p_{2i}p_{2j} \nonumber \\
&=& h^2 O \left(  \frac{|\xi_j| p_{1i}p_{1j}}{n}  + \frac{|\xi_j| p_{1i}p_{2j}}{n} + \frac{|\xi_j| p_{2i}p_{1j}}{n} + \frac{|\xi_j| p_{2i}p_{2j}}{n} \right)
\end{eqnarray*}
As before, we need to sum over $k(k-1)$ terms and divide by $\sigma_k^2$ as follows;
\begin{eqnarray*}
 K_1 &=& h^2 O \left( \frac{n \sum_{i \ne j} \abs{\xi_i}\abs{\xi_j} p_{1i}p_{1j}}{\vari}  \right) +  h^2 O \left( \frac{n \sum_{i \ne j} \abs{\xi_i}\abs{\xi_j} p_{1i}p_{2j}}{\vari}  \right) \nonumber \\
&&+ h^2 O \left( \frac{n \sum_{i \ne j} \abs{\xi_i}\abs{\xi_j} p_{2i}p_{1j}}{\vari} \right) + h^2 O \left( \frac{n \sum_{i \ne j} \abs{\xi_i}\abs{\xi_j} p_{2i}p_{2j}}{\vari} \right) \nonumber \\
&=&   h^2 O \left( \frac{n \left( \abs{{\boldsymbol \xi}} \cdot {\bf P}_1 \right)^2 }{\vari} \right) + 2h^2 O \left( \frac{n_1 \left( \abs{{\boldsymbol \xi}} \cdot {\bf P}_1\right) \left( \abs{{\boldsymbol \xi}} \cdot {\bf P}_2   \right) }{\vari} \right) + h^2 O \left( \frac{n \left( \abs{{\boldsymbol \xi}} \cdot {\bf P}_2 \right)^2 }{ \vari  } \right) \nonumber \\
&=&  h^2 O \left( \frac{n \left( \abs{{\boldsymbol \xi}} \cdot {\bf P}_1 + \abs{{\boldsymbol \xi}} \cdot {\bf P}_2 \right)^2 }
{\vari} \right) = O(h^2) = o(1)
\end{eqnarray*}
from {$n(|\boldsymbol \xi|\cdot ({\bf P}_1 + {\bf P}_2) )^2 = O(\vari)$ from 7 in Lemma.S1 and $h \rightarrow 0$.}

For $K_2$,  we have
\begin{eqnarray*}
| K_2| &=& \frac{1}{\sigma_k^2} \Bigg|\sum_{i\neq j}E \left[ \left( \frac{M_{1i,}}{n_1} - \frac{M_{2i}}{n_2}   \right)^2  \left( \frac{\kappa_{1j}M_{1j}}{n_1^2} + \frac{\kappa_{2j}M_{2j}}{n_2^2}    \right)  \right] \Bigg|\\
 &\leq&\frac{1}{\sigma_k^2}
\sum_{i \neq j} 2 E \left[ \left( \frac{M_{1i}^2}{n_1^2} + \frac{M_{2i^2}}{n_2^2}   \right)
  \left( \frac{|\kappa_{1j}|M_{1j}}{n_1^2} + \frac{|\kappa_{2j}|M_{2j}}{n_2^2}    \right)  \right]    \nonumber  \\
	&=& \frac{1}{\sigma_k^2}  \sum_{i\neq j} \left( \frac{|\kappa_{1j}| E(M_{1i}^2M_{1j})}{n_1^4}
   + \frac{ |\kappa_{2j}|E(M_{1i}^2) E(M_{2j})}{n_1^2n_2^2}
  + \frac{|\kappa_{1j}| E (M_{1j} M_{2i}^2)}{n_1^2n_2^2}
 + \frac{|\kappa_{2j}| E (M_{1i}^2 M_{2j})}{n_2^4} \right)\\
&=& h^2O( \frac{1}{n^{3.5} \sigma_k^2 }) =o(1)
\end{eqnarray*}
using $ n^{3.5} \sigma_k^2 \asymp  \sqrt{n} n\vari  \geq \sqrt{n} \epsilon \rightarrow \infty $ from the condition 2 in Theorem 1.
%
%	&=& \left(\frac{h p_{1i}^2 p_{1j}}{n_1^{2.5}}  + \frac{h p_{1i}p_{1j} }{n_1^3}\right)O(h) +  \frac{p_{1i}p_{2i}p_{1j}}{n_1^{2.5}} O(h)   \nonumber \\
%          &&+ \left(\frac{p_{1j}p_{2i}^2 + p_{1i}^2 p_{2j} }{n_1^{2.5}} + \frac{p_{1j}p_{2i} + p_{1i}p_{2j}}{n_1^3} \right) O(h)  \nonumber \\
%          &&+ \frac{p_{1i}p_{2i}p_{2j}}{n_1^{2.5}} O(h)  +  \left(\frac{h p_{2i}^2 p_{2j}}{n_1^{2.5}}  + \frac{h p_{2i}p_{2j} }{n_1^3} \right)O(h)
%\end{eqnarray*}
%Again, we need to sum over $k(k-1)$ terms and divide by $\sigma_k^2$ as follows;
%We'll just consider the first two terms from the expression above since the other four terms are asymptotically equivalent to these two.
%\begin{eqnarray} \label{EQR}
%\frac{1}{\sigma_k^2}  \sum_{i \ne j}^k E(Q_iR_j) &=&  h^2 O(\frac{1}{n^{3.5} n^{-2}(\vari)}) =
%O( \frac{1}{n^{1.5(\vari)} ) = h^2O(\frac{1}{n^{.5}}) =
%o(1)
%\end{eqnarray}
Thus, we have shown that  $W_5 \rightarrow 0$.

\section{Proof of Lemma 4}
%Let $\sigma_k^2 = 2 \sum_{i=1}^k s_i^2$ where $s_i^2 = Var(f_i(X_{1i},X_{2i})) =
% 2\left(\frac{p_{1i}}{n_1} + \frac{p_{2i}}{n_2} \right)^2$.
When $X_{1i}$ and $X_{2i}$, for $1\leq i \leq k$, come from independent Poisson distributions for all $1\leq i \leq k$,
from $ F_k \equiv  \frac{ \sumk f(X_{1i},X_{2i}) }{\sigma_k}
= \frac{\sumk {\cal G}_{1i}(X_{1i},X_{2i})}{\sigma_k }  + \frac{\sumk {\cal G}_{2i}(X_{1i},X_{2i})}{\sigma_k }$,
we show  $(i)$ $\frac{\sumk {\cal G}_{1i}(X_{1i},X_{2i})}{\sigma_k }  \cd N(0,1)$
and $(ii)$ $\frac{\sumk {\cal G}_{2i}(X_{1i},X_{2i})}{\sigma_k } \cp 0$.

The asymptotic normality of $ \frac{ \sum_{i=1}^k {\cal G}_{1i}(X_{1i},X_{2i})}{\sigma_k}$ is obtained from the Lyapounov's condition (Billingsley (1995)) as follows:
first,  we have
$ {\cal G}_{2i}(X_{1i},X_{2i})  =  (\frac{X_{1i}}{n_1} - \frac{X_{2i}}{n_2} )^2 -(\frac{X_{1i}}{n_1^2} + \frac{X_{2i}}{n_2^2}) -(p_{1i}-p_{2i})^2    =  (\frac{X_{1i}}{n_1} -p_{1i})^2 + (\frac{X_{2i}}{n_2} -p_{2i})^2 + 2 \xi_i (\frac{X_{1i}}{n_1} -p_{1i})
+ \xi_i (\frac{X_{2i}}{n_2} -p_{2i}) -(\frac{X_{1i}}{n_1^2} + \frac{X_{2i}}{n_2^2})$ where $\xi_i = p_{1i}-p_{2i}$
and we check the Lyapounov's condition which is
\begin{eqnarray*}
\frac{\sumk E \left[ {\cal G}_{2i}(X_{1i},X_{2i})^4 \right]}{\sigma_k^4}
&\leq&   \underbrace{\frac{1}{\sigma_k^4} \sumk \sum_{c=1}^2 E(\frac{X_{ci}}{n_c} -p_{ci})^8 }_{(I)}
 + \underbrace{\frac{1}{\sigma_k^4}\sumk \sum_{c=1}^2 \xi_i^4  E \left(\frac{X_{ci}}{n_c} -p_{ci} \right)^4}_{(II)}  \\
&& + \underbrace{\sumk E\left( \frac{X_{1i}}{n_1^2} + \frac{X_{2i}}{n_2}^2\right)^4}_{(III)}.
\end{eqnarray*}
Since we have  $E(Y-\lambda)^{2m} = O(\sum_{i=1}^m \lambda^i) =O( \lambda^m + \lambda)$
for $Y \sim poisson(\lambda)$,   we have
\begin{eqnarray}
(I) &\leq&  O\left( \frac{\sumk \sum_{c=1}^2 \left( p_{ci}^4   +  \frac{p_{ci}}{n_c^3} \right)}{ \var{4}  } \right)
=    O\left( \frac{\sum_{c=1}^2 \max_{i} p_{ci}^2}{\vari} \right)  +             O\left( \frac{1}{n^3 \var{4}} \right)\\
 &\leq &  O\left( \frac{\sum_{c=1}^2 \max_{i} p_{ci}^2}{||{\bf P}_c ||_2^2} \right)  + O\left( \frac{1}{n^3 \var{4}} \right)   =     o(1) \nonumber
\end{eqnarray}
where the first $O(\cdot)$ term is $o(1)$ due to the condition 2 in Theorem 1
and the second $O(\cdot)$ term is also $o(1)$ due to the condition 3 in Theorem 1.
Similarly, from $E(Y^4) = O(\sum_{m=1}^4 \lambda^m) =O(\lambda^4 + \lambda)$
for $Y\sim poisson(\lambda)$, we have
\begin{eqnarray*}
(II) &\leq&  O\left( \frac{n^2 \sumk \xi_i^4 (p_{ci}^2 + \frac{p_{ci}}{n_{ci}})}{\var{4}}  \right)
=    \frac{ \max_i p_{1i}^2 + \max_i p_{2i}^2 }{\vari}  O\left(  \frac{ n^2 ||\boldsymbol{\xi} ||_2^4  }{\vari}\right)
+  O\left(\frac{1}{n^3 \var{4}} \right) \\
 &=& o(1) O(1) +o(1) =o(1)
 \end{eqnarray*}
 where the first  $O(\cdot)$ term is $o(1)$ due to
 the the result 2 in Lemma.S1 and the condition 4 in Theorem 1
 and the second  $O(\cdot)$ term is $o(1)$  due to the condition 3 in Theorem 1.
Lastly, we have
\begin{eqnarray*}
(III) &=&  O\left( \frac{\sumk \sum_{c=1}^2\left( p_{ci}^4 + \frac{p_{ci}}{n_c^3} \right) }{\var{4} }   \right)  \\
&=&   O\left(\sum_{c=1}^2 \max_i p_{ci}^2 \right)  +           O\left( \frac{1}{n^3 \var{4}} \right)  =o(1)
\end{eqnarray*}
from  the result 1 in Lemma.S1 in the Supplementary material and the condition 3 in Theorem 1.
Combining these results, we prove the Lyapounov's condition is satisfied, so we have
the asymptotic normality of $\frac{\sumk {\cal G}_{1i}(X_{1i},X_{2i})}{\sigma_k }  \cd N(0,1)$.

For $(ii)$ $\frac{\sumk {\cal G}_{2i}(X_{1i},X_{2i})}{\sigma_k } \cp 0$,
we see that $E\sumk {\cal G}_{2i}(X_{1i},X_{2i})=0$, so it is sufficient to show
$Var(\frac{\sumk {\cal G}_{2i}(X_{1i},X_{2i})}{\sigma_k } ) \rightarrow 0$.
This can be shown by
\begin{eqnarray*}
Var(\frac{\sumk {\cal G}_{2i}(X_{1i},X_{2i})}{\sigma_k } )
&=& O\left( \frac{  n \sumk \xi_i^2 (p_{1i} + p_{2i})    }{   \vari }\right)  \\
&=&  (\max_i p_{1i} + \max_i p_{2i}) O\left( \frac{n|| \boldsymbol{\xi}||_2^2  }{\vari} \right)=o(1)
\end{eqnarray*}
from the $\max_i p_{ci} =o(1)$ in the result 2 in Lemma.S1 in the Supplementary material and
the condition 4 in Theorem 1.
Using $(i)$ and $(ii)$, we conclude
$F_k   \cd N(0,1)$.

\end{enumerate}
\qedSolid
\end{document}